\newtheorem{thm}{Theorem}[section]
\newtheorem{cor}[thm]{Corollary}
\newtheorem{lem}[thm]{Lemma}
\newtheorem{prop}[thm]{Proposition}
\theoremstyle{definition}
\theoremstyle{remark}
\newtheorem{rem}[thm]{Remark}
\theoremstyle{conclusion}
\theoremstyle{conjecture}
\numberwithin{equation}{section}
\begin{document}
\title[Static Schr\"{o}dinger-Hartree-Maxwell type equations]{Classification of nonnegative solutions to static Schr\"{o}dinger-Hartree-Maxwell type equations}

\author{Wei Dai, Zhao Liu, Guolin Qin}

\address{School of Mathematics and Systems Science, Beihang University (BUAA), Beijing 100083, P. R. China, and LAGA, Universit¨¦ Paris 13 (UMR 7539), Paris, France}
\email{weidai@buaa.edu.cn}

\address{School of Mathematics and Computer Science, Jiangxi Science and Technology Normal University, Nanchang 330038, P. R. China, and Department of Mathematics, Yeshiva University, New York, NY, USA}
\email{liuzhao@mail.bnu.edu.cn}

\address{Institute of Applied Mathematics, Chinese Academy of Sciences, Beijing 100190, P. R. China, and University of Chinese Academy of Sciences, Beijing 100049, P. R. China}
\email{qinguolin18@mails.ucas.ac.cn}

\thanks{W. Dai is supported by the NNSF of China (No. 11971049), the Fundamental Research Funds for the Central Universities and the State Scholarship Fund of China (No. 201806025011); Z. Liu is supported by the NNSF of China (No. 11801237) and the State Scholarship Fund of China (No. 201808360005).}

\begin{abstract}
In this paper, we are mainly concerned with the physically interesting static Schr\"{o}dinger-Hartree-Maxwell type equations
\begin{equation*}
  (-\Delta)^{s}u(x)=\left(\frac{1}{|x|^{\sigma}}\ast |u|^{p}\right)u^{q}(x) \,\,\,\,\,\,\,\,\,\,\,\, \text{in} \,\,\, \mathbb{R}^{n}
\end{equation*}
involving higher-order or higher-order fractional Laplacians, where $n\geq1$, $0<s:=m+\frac{\alpha}{2}<\frac{n}{2}$, $m\geq0$ is an integer, $0<\alpha\leq2$, $0<\sigma<n$, $0<p\leq\frac{2n-\sigma}{n-2s}$ and $0<q\leq\frac{n+2s-\sigma}{n-2s}$. We first prove the super poly-harmonic properties of nonnegative classical solutions to the above PDEs, then show the equivalence between the PDEs and the following integral equations
\begin{equation*}
u(x)=\int_{\mathbb{R}^n}\frac{R_{2s,n}}{|x-y|^{n-2s}}\left(\int_{\mathbb{R}^{n}}\frac{1}{|y-z|^{\sigma}}u^p(z)dz\right)u^{q}(y)dy.
\end{equation*}
Finally, we classify all nonnegative solutions to the integral equations via the method of moving spheres in integral form. As a consequence, we obtain the classification results of nonnegative classical solutions for the PDEs. Our results completely improved the classification results in \cite{CD,DFQ,DL,DQ,Liu}. In critical and super-critical order cases (i.e., $\frac{n}{2}\leq s:=m+\frac{\alpha}{2}<+\infty$), we also derive Liouville type theorem.
\end{abstract}
\maketitle {\small {\bf Keywords:} Higher-order fractional Laplacians; Schr\"{o}dinger-Hartree-Maxwell equations; Classification of nonnegative solutions; Super poly-harmonic properties; Nonlocal nonlinearities; The method of moving spheres. \\

{\bf 2010 MSC} Primary: 35B53; Secondary: 35J30, 35J91, 35B06.}

\section{Introduction}
\subsection{Classification results for nonnegative solutions in sub-critical order cases: $0<s<\frac{n}{2}$}
In this paper, we mainly consider nonnegative classical solutions to the following physically interesting static Schr\"{o}dinger-Hartree-Maxwell type equations involving higher-order or higher-order fractional Laplacians
\begin{equation}\label{PDE}
(-\Delta)^{s}u(x)=\left(\frac{1}{|x|^{\sigma}}\ast|u|^{p}\right)u^{q}(x) \,\,\,\,\,\,\,\,\,\,\,\, \text{in} \,\,\, \mathbb{R}^{n},
\end{equation}
where $n\geq1$, $0<s:=m+\frac{\alpha}{2}<\frac{n}{2}$, $m\geq0$ is an integer, $0<\alpha\leq2$, $0<\sigma<n$, $0<p\leq\frac{2n-\sigma}{n-2s}$ and $0<q\leq\frac{n+2s-\sigma}{n-2s}$. The higher-order or higher-order fractional Laplacians $(-\Delta)^{s}:={(-\Delta)}^m(-\Delta)^{\frac{\alpha}{2}}$. When $\sigma=4s$, $p=2$, $0<q\leq1$, \eqref{PDE} is called static Schr\"{o}dinger-Hartree type equations. When $\sigma=n-2s$, $p=\frac{n+2s}{n-2s}$, $0<q\leq\frac{4s}{n-2s}$, \eqref{PDE} is known as static Schr\"{o}dinger-Maxwell type equations. We say that equation \eqref{PDE} is in critical order if $s=\frac{n}{2}$, is in sub-critical order if $0<s<\frac{n}{2}$ and is in super-critical order if $\frac{n}{2}<s<+\infty$.

When $0<\alpha<2$, the nonlocal fractional Laplacians $(-\Delta)^{\frac{\alpha}{2}}$ is defined by
\begin{equation}\label{nonlocal defn}
  (-\Delta)^{\frac{\alpha}{2}}u(x)=C_{\alpha,n} \, P.V.\int_{\mathbb{R}^n}\frac{u(x)-u(y)}{|x-y|^{n+\alpha}}dy:=C_{\alpha,n}\lim_{\epsilon\rightarrow0}\int_{|y-x|\geq\epsilon}\frac{u(x)-u(y)}{|x-y|^{n+\alpha}}dy
\end{equation}
for any functions $u\in C^{[\alpha],\{\alpha\}+\epsilon}_{loc}\cap\mathcal{L}_{\alpha}(\mathbb{R}^{n})$ with arbitrarily small $\epsilon>0$, where $[\alpha]$ denotes the integer part of $\alpha$, $\{\alpha\}:=\alpha-[\alpha]$, the constant $C_{\alpha,n}=\left(\int_{\mathbb{R}^{n}}\frac{1-\cos(2\pi\zeta_{1})}{|\zeta|^{n+\alpha}}d\zeta\right)^{-1}$ and the function spaces
\begin{equation}\label{0-1}
  \mathcal{L}_{\alpha}(\mathbb{R}^{n}):=\left\{u: \mathbb{R}^{n}\rightarrow\mathbb{R}\,\Big|\,\int_{\mathbb{R}^{n}}\frac{|u(x)|}{1+|x|^{n+\alpha}}dx<\infty\right\}.
\end{equation}
The fractional Laplacians $(-\Delta)^{\frac{\alpha}{2}}$ can also be defined equivalently (see \cite{CLM}) by Caffarelli and Silvestre's extension method (see \cite{CS}) for $u\in C^{[\alpha],\{\alpha\}+\epsilon}_{loc}(\mathbb{R}^{n})\cap\mathcal{L}_{\alpha}(\mathbb{R}^{n})$.

We say $u$ is a classical solution to equation \eqref{PDE}, provided that $u\in C^{2m+\alpha}(\mathbb{R}^{n})$ if $\alpha=2$, $u\in C^{2m+[\alpha],\{\alpha\}+\epsilon}_{loc}(\mathbb{R}^{n})$ (with arbitrarily small $\epsilon>0$) if $0<\alpha<2$, and $u$ satisfies equation \eqref{PDE} pointwise in $\mathbb{R}^{n}$. Throughout this paper, we define $(-\Delta)^{m+\frac{\alpha}{2}}u:=(-\Delta)^{m}(-\Delta)^{\frac{\alpha}{2}}u$ for $u\in C^{2m+[\alpha],\{\alpha\}+\epsilon}_{loc}(\mathbb{R}^{n})\cap\mathcal{L}_{\alpha}(\mathbb{R}^{n})$ in the cases $0<\alpha<2$, where $(-\Delta)^{\frac{\alpha}{2}}u$ is defined by definition \eqref{nonlocal defn}. Due to the nonlocal feature of $(-\Delta)^{\frac{\alpha}{2}}$, we need to assume  $u\in C^{2m+[\alpha],\{\alpha\}+\epsilon}_{loc}(\mathbb{R}^{n})$ with arbitrarily small $\epsilon>0$ (merely $u\in C^{2m+[\alpha],\{\alpha\}}$ is not enough) to guarantee that $(-\Delta)^{\frac{\alpha}{2}}u\in C^{2m}(\mathbb{R}^{n})$ (see \cite{CLM,S}), and hence $u$ is a classical solution to equation \eqref{PDE} in the sense that $(-\Delta)^{m+\frac{\alpha}{2}}u$ is pointwise well-defined and continuous in the whole $\mathbb{R}^{n}$.

One should observe that both the fractional Laplacians $(-\Delta)^{\frac{\alpha}{2}}$ and the Hartree type nonlinearity are nonlocal in our equation \eqref{PDE}, which is quite different from most of the known results in previous literature. The fractional Laplacian is a nonlocal integral operator. It can be used to model diverse physical phenomena, such as anomalous diffusion and quasi-geostrophic flows, turbulence and water waves, molecular dynamics, and relativistic quantum mechanics of stars (see \cite{Co,CV} and the references therein). It also has various applications in probability and finance (see \cite{Be,CT} and the references therein). In particular, the fractional Laplacian can also be understood as the infinitesimal generator of a stable L\'{e}vy process (see \cite{Be}).

PDEs of the type \eqref{PDE} arise in the Hartree-Fock theory of the nonlinear Schr\"{o}dinger equations (see \cite{LS}). The solution $u$ to problem \eqref{PDE} is also a ground state or a stationary solution to the following dynamic Schr\"{o}dinger-Hartree equation
\begin{equation}\label{Hartree}
i\partial_{t}u+(-\Delta)^{m+\frac{\alpha}{2}}u=\left(\frac{1}{|x|^{\sigma}}\ast|u|^{p}\right)u^{q}, \qquad (t,x)\in\mathbb{R}\times\mathbb{R}^{n}
\end{equation}
involving higher-order or higher-order fractional Laplacians. The Schr\"{o}dinger-Hartree equations have many interesting applications in the quantum theory of large systems of non-relativistic bosonic atoms and molecules (see, e.g. \cite{FL}). The Schr\"{o}dinger equations and Hartree equations with Laplacians, poly-Laplacians or fractional Laplacians have been quite intensively studied, please refer to \cite{Karpman,LMZ,MXZ,MXZ3} and the references therein, in which the ground state solution can be regarded as a crucial criterion or threshold for global well-posedness and scattering in the focusing case. Therefore, the classification of solutions to \eqref{PDE} plays an important and fundamental role in the study of the focusing dynamic Schr\"{o}dinger-Hartree equations \eqref{Hartree} involving higher-order or higher-order fractional Laplacians.

The qualitative properties of solutions to fractional order or higher order elliptic equations have been extensively studied, for instance, see \cite{CDQ0,CDQ,CF,CFL,CGS,CL2,CLO,CLM,CY,DQ,GNN1,Lin,LZ,WX,Xu} and the references therein. There are also lots of literatures on the qualitative properties of solutions to Hartree or Choquard type equations of fractional or higher order, please see e.g. \cite{CD,CDZ,CL3,CW,DFHQW,DFQ,DL,DQ,Lei,Lieb,Liu,MS,MS1,MZ,XL} and the references therein. Liu proved in \cite{Liu} the classification results for positive classical solutions to \eqref{PDE} with $m=0$, $\alpha=2$, $\sigma=4\in(0,n)$, $p=2$ and $q=1$, by using the idea of considering the equivalent systems of integral equations instead, which was initially used in dealing with second order Choquard equations by Ma and Zhao \cite{MZ}. In \cite{CD}, under some weak decay assumptions, Cao and Dai considered the differential equations directly and classified nonnegative $C^{4}$ solutions to bi-harmonic equations \eqref{PDE} with $m=1$, $\alpha=2$, $\sigma=8\in(0,n)$, $p=2$ and $0<q\leq1$. In \cite{DQ}, under some weak integrability assumptions, Dai and Qin classified nonnegative classical solutions to third order equations \eqref{PDE} with $m=1$, $\alpha=1$, $\sigma=6\in(0,n)$, $p=2$ and $0<q\leq1$. Dai, Fang and Qin \cite{DFQ} classified all the positive classical solutions to \eqref{PDE} when $m=0$, $0<\alpha<2$, $\sigma=2\alpha\in(0,n)$, $p=2$ and $q=1$ via a direct method of moving planes for fractional Laplacians. In \cite{DL}, Dai and Liu established classification results for nonnegative classical solutions to Schr\"{o}dinger-Hartree-Maxwell type equations \eqref{PDE} in the cases that $m=0$, $0<\alpha\leq 2$, $\sigma=2\alpha\in(0,n)$, $p=2$ and $0<q\leq1$ or $m=0$, $0<\alpha\leq 2$, $\sigma=n-\alpha\in(0,n)$, $p=\frac{n+\alpha}{n-\alpha}$ and $0<q\leq\frac{2\alpha}{n-\alpha}$.

In this paper, we will completely improve the classification results in Liu \cite{Liu}, Cao and Dai \cite{CD}, Dai and Qin \cite{DQ}, Dai, Fang and Qin \cite{DFQ} and Dai and Liu \cite{DL}, and classified nonnegative classical solutions to Schr\"{o}dinger-Hartree-Maxwell type equations \eqref{PDE} in the full range $s:=m+\frac{\alpha}{2}\in(0,\frac{n}{2})$, $m\geq0$ is an integer, $0<\alpha\leq2$, $0<\sigma<n$, $0<p\leq\frac{2n-\sigma}{n-2s}$ and $0<q\leq\frac{n+2s-\sigma}{n-2s}$.

Equations \eqref{PDE} are closely related to the following integral equations
\begin{equation}\label{IE}
u(x)=\int_{\mathbb{R}^n}\frac{R_{2s,n}}{|x-y|^{n-2s}}\left(\int_{\mathbb{R}^{n}}\frac{1}{|y-z|^{\sigma}}u^p(z)dz\right)u^{q}(y)dy,
\end{equation}
where the Riesz potential's constants $R_{\gamma,n}:=\frac{\Gamma(\frac{n-\gamma}{2})}{\pi^{\frac{n}{2}}2^{\gamma}\Gamma(\frac{\gamma}{2})}$ for $0<\gamma<n$ (see \cite{Stein}). In fact, we will show that PDEs \eqref{PDE} and IEs \eqref{IE} are equivalent. In order to prove this equivalence, we have to derive the super poly-harmonic properties first.

It is well known that the super poly-harmonic properties of nonnegative solutions play a crucial role in establishing the integral representation formulae, Liouville type theorems and classification of solutions to higher order PDEs in $\mathbb{R}^{n}$ or $\mathbb{R}^{n}_{+}$ (see \cite{BGM,CD,CDQ0,CDQ,CF,CFL,CL4,CY,DQ,Lin,WX} and the references therein). For integer higher-order equations (i.e., $\alpha=2$ in \eqref{PDE}), the super poly-harmonic properties for nonnegative solutions usually can be derived via the ``spherical average, re-centers and iteration" arguments in conjunction with careful ODE analysis (we refer to \cite{CFL,Lin,WX}, see also \cite{CDQ,CF,CL4,DQ} and the references therein). However, for the fractional higher-order equations \eqref{PDE} (i.e., $0<\alpha<2$ in \eqref{PDE}), the super poly-harmonic properties are difficult to be derived. The reason for this is that $(-\Delta)^{\frac{\alpha}{2}}$ is nonlocal and $(-\Delta)^{\frac{\alpha}{2}}f(r)$ can not be calculated or expanded accurately ($0<\alpha<2$ and $f(r)$ is a radially symmetric function), thus the strategy for integer higher-order equations does not work any more for equations \eqref{PDE} involving higher-order fractional Laplacians. In \cite{CDQ0}, by taking full advantage of the Poisson representation formulae for $(-\Delta)^{\frac{\alpha}{2}}$ and developing some new integral estimates on the average $\int_{R}^{+\infty}\frac{R^{\alpha}}{r(r^{2}-R^{2})^{\frac{\alpha}{2}}}\bar{u}(r)dr$ and iteration techniques, Cao, Dai and Qin first introduced a new approach to overcome these difficulties and establish the super-harmonic properties of nonnegative solutions to PDEs involving higher-order fractional Laplacians.

Inspired by ideas from \cite{CDQ0}, we prove the super poly-harmonic properties for nonnegative classical solutions to \eqref{PDE} in both integer and fractional higher-order cases in a unified way (see Section 2).

\begin{thm}\label{Thm0}
Assume $n\geq2$, $0<s:=m+\frac{\alpha}{2}<+\infty$, $0<\alpha\leq2$, $m\geq1$ is an integer, $-\infty<\sigma<n$, $0<q<+\infty$, $0<p<+\infty$ if $0<\alpha<2$, $1\leq p<+\infty$ if $\alpha=2$. Suppose that $u$ is a nonnegative classical solution to \eqref{PDE}. Then, we have, for every $i=0,1,\cdots,m-1$,
\begin{equation}\label{0-2}
  (-\Delta)^{i+\frac{\alpha}{2}}u(x)\geq0, \qquad \forall \,\, x\in\mathbb{R}^{n}.
\end{equation}
\end{thm}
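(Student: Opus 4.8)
The plan is to prove the chain of inequalities $(-\Delta)^{i+\frac{\alpha}{2}}u \ge 0$ for $i = m-1, m-2, \dots, 0$ by a "top-down" contradiction argument combined with spherical averaging. First I would set $v_i := (-\Delta)^{i+\frac{\alpha}{2}}u$ for $i = 0, 1, \dots, m-1$ and $v_{m} := (-\Delta)^{m-1}(-\Delta)^{\frac{\alpha}{2}}u$, noting that $(-\Delta) v_{m-1} = v_m$ while the equation \eqref{PDE} gives $(-\Delta) v_{m-1} = \left(|x|^{-\sigma} * |u|^p\right) u^q \ge 0$ (here I use $m \ge 1$ and that the nonlinearity is manifestly nonnegative since $u \ge 0$). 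Thus $v_{m-1}$ is superharmonic. The strategy is then: starting from the bottom fact that $(-\Delta) v_{m-1} \ge 0$, show inductively that each $v_i \ge 0$; the natural order is to first establish $v_{m-1} \ge 0$, then use $(-\Delta) v_{m-2} = v_{m-1} \ge 0$ to get $v_{m-2} \ge 0$, and so on down to $v_0 = (-\Delta)^{\frac{\alpha}{2}} u \ge 0$.

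The core of each step is the following: if $w \ge 0$ satisfies $-\Delta w = g \ge 0$ in $\mathbb{R}^n$ with $w$ itself a (spherical average of a) nonnegative quantity, one wants to conclude $w \ge 0$ — but in our setting the sign of $w = v_i$ is exactly what is unknown, so instead I would argue by contradiction. Suppose $v_{m-1}(x_0) < 0$ at some point; letting $\bar{v}_{m-1}(r)$ denote the spherical average of $v_{m-1}$ over $\partial B_r(x_0)$, the superharmonicity $-\Delta v_{m-1} \ge 0$ forces $\bar v_{m-1}$ to be non-increasing in $r$ after suitable normalization (more precisely $(r^{n-1}\bar v_{m-1}')' \le 0$), so $\bar v_{m-1}(r) \le \bar v_{m-1}(0) = v_{m-1}(x_0) < 0$ for all $r > 0$, and in fact $\bar v_{m-1}(r) \le -c$ for $r$ large with $c>0$. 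One then integrates up: since $-\Delta \bar v_{m-2} = \bar v_{m-1} \le -c < 0$, the average $\bar v_{m-2}(r)$ grows at least like $c r^2$, hence $\bar v_{m-2}(r) \to +\infty$; iterating, $\bar v_{m-1-k}(r) \gtrsim r^{2k}$, and in particular $\bar u(r) = \bar v_{-1}$... — more carefully, working with $u$ itself one gets $\bar u(r) \gtrsim r^{2(m-1)}$ as $r \to \infty$. This polynomial lower growth of $u$ then has to be fed back into the right-hand side: the Hartree term $|x|^{-\sigma} * u^p$ combined with $u^q$ produces an even faster growth for $-\Delta v_{m-1}$, and re-integrating contradicts the assumed finiteness/classical-solution regularity of the $v_i$ (which are continuous, hence locally bounded) — a blow-up in finite $r$, or at least a contradiction with $v_{m-1}$ being real-valued and finite everywhere. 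The same scheme, run with the hypothesis $v_i(x_0) < 0$ for general $i < m-1$, yields $\bar u(r) \gtrsim r^{2(i+1)}$ and the same feedback contradiction; alternatively, once $v_{m-1} \ge 0$ is known one descends purely by the maximum principle / the representation $v_{m-2}(x) = \int R_{2,n}|x-y|^{-(n-2)} v_{m-1}(y)\,dy + (\text{harmonic} \ge 0)$, but establishing that the harmonic remainder is nonnegative again requires ruling out negative polynomial-order terms, which is precisely the averaging-and-contradiction argument.

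The main obstacle — and the reason the theorem is nontrivial even though the nonlinearity is obviously nonnegative — is the fractional bottom layer $(-\Delta)^{\frac{\alpha}{2}}$ when $0 < \alpha < 2$: the quantity $v_{m-1} = (-\Delta)^{m-1}(-\Delta)^{\frac{\alpha}{2}} u$ is \emph{not} a classical integer Laplacian of $u$, so the clean ODE for spherical averages $(r^{n-1}\bar f')'$ is unavailable at that innermost step. Following the approach of \cite{CDQ0}, I would instead use the Poisson representation formula for $(-\Delta)^{\frac{\alpha}{2}}$ on balls, which expresses $(-\Delta)^{\frac{\alpha}{2}}u(x)$ (or rather $u$ in terms of its values and the "fractional normal derivative") via the kernels supported on the complement of the ball, and derive the key integral estimate controlling $\int_R^{+\infty} \frac{R^\alpha}{r(r^2-R^2)^{\alpha/2}}\,\bar u(r)\,dr$ in terms of $\bar{v}_{m-1}$ and lower-order averages. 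The delicate point is that this nonlocal average-estimate must be compatible with the polynomial growth bounds produced by the integer iterations above, and one has to carefully track that the fractional integral of a function growing like $r^{2k}$ still diverges at the expected rate so that the contradiction persists. Once the innermost fractional estimate is in hand, the remaining $m-1$ integer steps are the standard "spherical average, re-center, iterate" ODE analysis of \cite{CFL,Lin,WX}, and the case $\alpha = 2$ is exactly that classical argument with the bottom layer being an ordinary superharmonic function.
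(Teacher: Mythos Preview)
Your outline captures the architecture correctly: contradiction via spherical averaging for the integer layers, and the Green--Poisson representation together with the integral $\int_R^\infty \frac{R^\alpha}{r(r^2-R^2)^{\alpha/2}}\bar u(r)\,dr$ for the fractional bottom layer, as in \cite{CDQ0}. Two points, however, need correction.

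First, the sign bookkeeping. From $\bar v_{m-1}\le -c_0$ the successive iterates \emph{alternate} in sign; what one actually obtains (for $\alpha=2$) is $(-1)^{m+1}\bar u(r)\ge a_m r^{2m}+\cdots$, not $\bar v_{m-1-k}\gtrsim r^{2k}$ with a fixed sign. This parity distinction is essential: when $m$ is even (and analogously in the fractional case) the contradiction is immediate, since $\bar u$ --- or the Poisson integral of $u$ over $\{|y|>R\}$ --- becomes negative, violating $u\ge0$ directly. Only for $m$ odd does one need a further argument.

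Second, and this is the real gap, for the odd-parity case your proposed mechanism --- feed the polynomial growth of $u$ back into the Hartree nonlinearity and re-integrate to force blow-up --- is not what the paper does, and as written it is incomplete: the classical Wei--Xu feedback requires an explicit \emph{re-centering} step you do not mention, and adapting it to the nonlocal convolution $|x|^{-\sigma}*u^p$ is not automatic. The paper bypasses feedback entirely. For $\alpha=2$ with $m$ odd it uses that the equation being pointwise defined already forces $\int_{\mathbb{R}^n}|x|^{-\sigma}u^p\,dx<\infty$; then $p\ge1$ and Jensen give $\overline{u^p}(r)\ge\bar u(r)^p\ge Cr^{2mp}$, which makes this integral diverge --- and this is precisely where the hypothesis $p\ge1$ for $\alpha=2$ enters. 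For $0<\alpha<2$ with $m$ odd the paper uses instead $u\in\mathcal L_\alpha$, i.e.\ $\int_1^\infty \bar u(r)r^{-1-\alpha}\,dr<\infty$, and shows via a Fubini argument in $R$ (see \eqref{2-16}--\eqref{2-18}) that this is incompatible with the lower bound $\int_R^{2R}\frac{R^\alpha\bar u(r)}{r(r^2-R^2)^{\alpha/2}}\,dr\ge CR^{2s-2}$ obtained from the Poisson representation. These built-in integrability conditions close the contradiction in one stroke, without any iteration on the nonlinearity.
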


\begin{rem}\label{rem1}
In the fractional higher-order cases $0<\alpha<2$, Theorem \ref{Thm0} can be deduced from Theorem 1.1 in \cite{CDQ0} directly. In the integer higher-order cases $\alpha=2$, the results in Theorem \ref{Thm0} are new. We will give a unified proof for Theorem \ref{Thm0} in both integer and fractional higher-order cases in Section 2.
\end{rem}

From the super poly-harmonic properties of nonnegative solutions in Theorem \ref{Thm0}, by using the methods in \cite{CDQ,CFY,DFQ,ZCCY}, we can deduce the following equivalence between PDEs \eqref{PDE} and IEs \eqref{IE} (see Section 3).
\begin{thm}\label{equivalence}
Assume $n\geq2$, $0<s:=m+\frac{\alpha}{2}<\frac{n}{2}$, $0<\alpha\leq2$, $m\geq0$ is an integer, $-\infty<\sigma<n$, $0<q<+\infty$, $0<p<+\infty$ if $\alpha\in(0,2)$ and $m\geq1$ \emph{or} $\alpha\in(0,2]$ and $m=0$, $1\leq p<+\infty$ if $\alpha=2$ and $m\geq1$. Suppose that $u$ is a nonnegative classical solution to \eqref{PDE}, then $u$ is also a nonnegative solution to integral equation \eqref{IE}, and vice versa.
\end{thm}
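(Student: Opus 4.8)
The strategy splits into two implications. For the direction "PDE $\Rightarrow$ IE", the plan is to work from the inside out using the super poly-harmonic property from Theorem \ref{Thm0}. Write $v := (-\Delta)^{\frac{\alpha}{2}}u$ (when $\alpha<2$; when $\alpha=2$ set $v := u$) and observe that equation \eqref{PDE} becomes the integer poly-harmonic equation $(-\Delta)^m v = f$ with $f := \left(|x|^{-\sigma}\ast|u|^p\right)u^q \geq 0$. Theorem \ref{Thm0} gives $(-\Delta)^i v \geq 0$ for $i=0,1,\dots,m-1$, i.e. $v$ is nonnegative and super-$m$-harmonic. The first key step is then to establish, via the classical iterated spherical-average argument (integrate the equation on balls $B_R$, use that each $(-\Delta)^i v$ is superharmonic hence its average $\overline{(-\Delta)^i v}(R)$ is monotone, and push $R\to\infty$), the integral representation
\[
(-\Delta)^{i}v(x) = \int_{\mathbb{R}^n}\frac{R_{2(m-i),n}}{|x-y|^{n-2(m-i)}}\,f(y)\,dy + c
\]
with a nonnegative constant $c$, then rule out $c>0$ by a growth/integrability contradiction (if $c>0$ the left side would be bounded below by $c$ while consistency with the finiteness of the convolution forces $c=0$; this is the standard argument from \cite{CDQ,CFY,ZCCY}). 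Iterating down to $i=0$ yields $v(x) = R_{2m,n}\int |x-y|^{-(n-2m)} f(y)\,dy$.

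The second key step handles the fractional layer. When $0<\alpha<2$ we now have $(-\Delta)^{\frac{\alpha}{2}}u = v$ with $v$ the explicit nonnegative Riesz potential above; we must invert $(-\Delta)^{\frac{\alpha}{2}}$ to recover $u$. The plan is to show $u(x) = R_{\alpha,n}\int |x-y|^{-(n-\alpha)} v(y)\,dy + c'$ with $c'\geq 0$, again using the nonlocal analogue of the super-harmonic/representation argument (here one uses the Poisson-type estimates and the iteration technique from \cite{CDQ0,CDQ} rather than plain spherical averages, since $(-\Delta)^{\frac{\alpha}{2}}$ is nonlocal), and then eliminate $c'$. Composing the two Riesz potentials via the semigroup property $\int |x-y|^{-(n-\alpha)}|y-z|^{-(n-2m)}\,dy = C\,|x-z|^{-(n-2m-\alpha)} = C|x-z|^{-(n-2s)}$ (with the constants combining to exactly $R_{2s,n}$, using $R_{\alpha,n}R_{2m,n}\cdot(\text{composition constant}) = R_{2s,n}$) and Fubini gives \eqref{IE}. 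The case $m=0$, $\alpha\in(0,2]$ is just this second step alone; the case $\alpha=2$, $m\geq1$ is just the first step (with the familiar requirement $p\geq1$ entering only to make $f$ of the right form for the classical theory).

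For the converse direction "IE $\Rightarrow$ PDE", the plan is the routine one: given a solution $u$ of \eqref{IE}, first bootstrap local regularity of the right-hand side (the inner potential $\int|y-z|^{-\sigma}u^p\,dz$ is finite and the outer Riesz potential of a locally bounded, suitably decaying function lies in the required Hölder/$C^{2s}$ class), so that $u$ qualifies as a classical solution in the sense defined in the Introduction; then apply $(-\Delta)^s = (-\Delta)^m(-\Delta)^{\frac{\alpha}{2}}$ term by term, using the distributional identities $(-\Delta)^{\frac{\alpha}{2}}\left(R_{\alpha,n}|x|^{-(n-\alpha)}\ast g\right) = g$ and $(-\Delta)^m\left(R_{2m,n}|x|^{-(n-2m)}\ast h\right) = h$ valid for nonnegative $g,h$ with the appropriate decay, to recover \eqref{PDE} pointwise.

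The main obstacle is the elimination of the additive constants $c$ and $c'$ in the representation formulae (equivalently, controlling the behavior of $u$ and its $(-\Delta)^{i+\alpha/2}u$ at infinity using only that $u$ is a nonnegative classical solution, with no a priori decay assumed). In the integer layer this is the delicate part of the spherical-average iteration, where one must balance the rate at which averages can grow against the convergence of the defining integral, exactly as in \cite{CDQ,CFY,ZCCY}; in the fractional layer one must run the nonlocal version, invoking the new integral estimates and iteration scheme of \cite{CDQ0}. The composition/constant-bookkeeping for $R_{2s,n}$ and the converse regularity bootstrap are, by contrast, standard.
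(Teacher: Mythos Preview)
Your proposal is correct and follows essentially the same route as the paper's Section~3: invoke Theorem~\ref{Thm0}, peel off the Laplacians one at a time via Green's functions on balls, the maximum principle, and the Liouville theorem to obtain Riesz-potential representations up to nonnegative additive constants, kill each intermediate constant by the divergence of $\int|y|^{-(n-2)}\,dy$ at the next layer (and the final constant $c'$ via $\int_{\mathbb{R}^n}|x|^{-\sigma}u^p\,dx<\infty$), handle the last fractional step with the nonlocal maximum principle and Liouville theorem (the paper's Lemmas~\ref{max} and~\ref{Liouville}), and then compose the iterated Riesz potentials using the semigroup identity. The only cosmetic difference is that the paper phrases the integer-layer step via Green's functions and maximum principles rather than spherical averages, and treats $\alpha=2$ uniformly rather than as a separate case (your ``set $v:=u$'' remark for $\alpha=2$ is slightly off---you still want $v=(-\Delta)u$ there, or equivalently run the integer argument with $m+1$ layers).
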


By the equivalence between PDEs \eqref{PDE} and IEs \eqref{IE}, we can consider integral equations \eqref{IE} instead of PDEs \eqref{PDE}. By applying the methods of moving spheres in integral forms, we derive the following classification results for nonnegative solutions to IEs \eqref{IE} (See Section 4). For more literatures on the classification of solutions and Liouville type theorems for various PDE and IE problems via the methods of moving planes or spheres, please refer to \cite{CD,CDQ,CF,CFY,CGS,CL1,CL0,CL2,CLL,CLO,CLZ,DFHQW,DFQ,DL,DQ,GNN1,JLX,Li,Lin,LZ,MZ,Pa,Serrin,WX,Xu} and the references therein.
\begin{thm}\label{Thm}
Assume $n\geq1$, $0<s:=m+\frac{\alpha}{2}<\frac{n}{2}$, $0<\alpha\leq2$, $m\geq0$ is an integer, $0<\sigma<n$, $0<p\leq\frac{2n-\sigma}{n-2s}$ and $0<q\leq\frac{n+2s-\sigma}{n-2s}$. Suppose $u\in C(\mathbb{R}^{n})$ is a nonnegative solution to IE \eqref{IE}. In the critical case $p=\frac{2n-\sigma}{n-2s}$ and $q=\frac{n+2s-\sigma}{n-2s}$, we have either $u\equiv0$ or $u$ must have the following form
\begin{equation*}
	u(x)=\mu^{\frac{n-2s}{2}}Q\left(\mu(x-x_{0})\right) \qquad \text{for some} \,\,\, \mu>0 \,\,\, \text{and} \,\,\, x_{0}\in\mathbb{R}^{n},
\end{equation*}
where $Q(x)=\left(\frac{1}{R_{2s,n}I\left(\frac{\sigma}{2}\right)I\left(\frac{n-2s}{2}\right)}\right)^{\frac{n-2s}{2(n+2s-\sigma)}}\left(\frac{1}{1+|x|^{2}}\right)^{\frac{n-2s}{2}}$ with $I(\gamma):=\frac{\pi^{\frac{n}{2}}\Gamma\left(\frac{n-2\gamma}{2}\right)}{\Gamma(n-\gamma)}$ for $0<\gamma<\frac{n}{2}$. In the subcritical cases $0<p<\frac{2n-\sigma}{n-2s}$ or $0<q<\frac{n+2s-\sigma}{n-2s}$, the unique nonnegative solution to \eqref{IE} is $u\equiv0$ in $\mathbb{R}^{n}$.
\end{thm}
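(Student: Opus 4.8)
The plan is to classify nonnegative solutions $u \in C(\mathbb{R}^n)$ of the integral equation \eqref{IE} by the method of moving spheres in integral form, treating the subcritical and critical cases in a unified framework. First I would record the basic structural facts: for a nonnegative solution $u \not\equiv 0$, the integral representation immediately forces $u > 0$ everywhere in $\mathbb{R}^n$ (if $u$ vanished at a point, the positive integrand would have to vanish a.e., forcing $u \equiv 0$), and a standard argument shows the two inner potentials $v(y) := \int_{\mathbb{R}^n} |y-z|^{-\sigma} u^p(z)\,dz$ and $w(x) := \int_{\mathbb{R}^n} |x-y|^{-(n-2s)} v(y) u^q(y)\,dy = u(x)/R_{2s,n}$ are finite (hence $u$ has appropriate decay). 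For any $x_0 \in \mathbb{R}^n$ and $\lambda > 0$ introduce the Kelvin transform $u_{x_0,\lambda}(x) := \left(\frac{\lambda}{|x-x_0|}\right)^{n-2s} u\!\left(x_0 + \frac{\lambda^2(x-x_0)}{|x-x_0|^2}\right)$. The key computational step is to verify, using the conformal covariance of the Riesz kernels $|x-y|^{-(n-2s)}$ and $|y-z|^{-\sigma}$ together with the change of variables under inversion, that
\begin{equation*}
  u_{x_0,\lambda}(x) - u(x) = \int_{B_\lambda(x_0)} \left( \frac{\lambda}{|x-x_0|} \right)^{\!?} \!\!\Big[ \cdots \Big]\,dy,
\end{equation*}
more precisely that the difference $u_{x_0,\lambda}(x) - u(x)$ on $\Sigma_\lambda := B_\lambda(x_0) \setminus \{x_0\}$ can be written as an integral over $\Sigma_\lambda$ of a kernel that is \emph{positive} there, applied to a combination of $u_{x_0,\lambda}^p - u^p$, $u_{x_0,\lambda}^q - u^q$ and, crucially in the subcritical case, an extra factor of the form $\left(\frac{\lambda}{|x-x_0|}\right)^{\tau}$ or $\left(\frac{\lambda}{|y-x_0|}\right)^{\tau}$ with $\tau = (n-2s)\big(\frac{2n-\sigma}{n-2s}-p\big) + \cdots \ge 0$ that only helps the sign. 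This ``moving-spheres integral inequality'' is the technical heart; I expect its careful derivation — keeping track of the exponents $n-2s$, $\sigma$, $p$, $q$ and their critical thresholds, and confirming the residual powers of $\lambda/|x-x_0|$ are nonnegative exactly when $p \le \frac{2n-\sigma}{n-2s}$ and $q \le \frac{n+2s-\sigma}{n-2s}$ — to be the main obstacle.

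Granting that inequality, the moving-spheres scheme runs as follows. Fix $x_0$ and set $w_\lambda := u_{x_0,\lambda} - u$ on $\Sigma_\lambda$. For $\lambda$ small one shows $w_\lambda \le 0$ on $\Sigma_\lambda$ (the start of the procedure), using the decay of $u$ near $x_0$ after inversion — equivalently, boundedness of $u$ near $x_0$ and the blow-up $\left(\frac{\lambda}{|x-x_0|}\right)^{n-2s} \to \infty$ as $x \to x_0$ is controlled because we compare $u_{x_0,\lambda}$ with $u$ and the integral inequality closes for small $\lambda$. Define the critical radius $\bar\lambda(x_0) := \sup\{\mu > 0 : w_\lambda \le 0 \text{ on } \Sigma_\lambda \ \forall \lambda \in (0,\mu]\}$. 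The standard dichotomy is: either $\bar\lambda(x_0) = +\infty$ for \emph{every} $x_0$, or $\bar\lambda(x_0) < +\infty$ for \emph{some} $x_0$. In the latter case one proves that the sphere stops only because $u$ is already invariant under the inversion $I_{x_0,\bar\lambda}$, i.e. $w_{\bar\lambda} \equiv 0$ on $\Sigma_{\bar\lambda}$; this uses the strict positivity of the kernel in the moving-spheres inequality together with a contradiction argument (if $w_{\bar\lambda} \le 0$ but $\not\equiv 0$, a narrow-region/compactness estimate lets one push $\lambda$ slightly past $\bar\lambda$).

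It remains to extract the conclusion from the two alternatives. If $\bar\lambda(x_0) = +\infty$ for all $x_0$, a now-classical calculus lemma (Li--Zhang / Li type: a function satisfying $u_{x_0,\lambda}(x) \le u(x)$ for all $x_0$, all $\lambda > 0$, and all $|x-x_0| > \lambda$ must be constant) forces $u \equiv \text{const}$; but a positive constant cannot solve \eqref{IE} since the right-hand side would be infinite (the Riesz integral of a positive constant diverges as $n - 2s < n$), so this case yields only $u \equiv 0$, contradicting $u \not\equiv 0$ — hence it does not occur when $u \not\equiv 0$, \emph{unless} it is the vehicle by which, in the subcritical case, we conclude $u\equiv 0$. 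If instead some $x_0$ has $\bar\lambda(x_0) < \infty$ with $w_{\bar\lambda} \equiv 0$, then $u$ is invariant under that inversion; doing this at \emph{every} center $x_0$ and invoking the characterization of functions invariant under a family of inversions shows $u(x) = \big(\frac{a}{b + |x - x_0|^2}\big)^{\frac{n-2s}{2}}$ for constants $a, b > 0$, i.e. $u$ has the stated form $\mu^{\frac{n-2s}{2}} Q(\mu(x - x_0))$. Finally, substituting this ansatz back into \eqref{IE} and computing the two Riesz-type integrals via the Beta-function identity $\int_{\mathbb{R}^n} |x-y|^{-(n-2\gamma)} (1+|y|^2)^{-\gamma}\,dy$-type formulas pins down the constant through $I(\frac{\sigma}{2})$, $I(\frac{n-2s}{2})$ and $R_{2s,n}$, and simultaneously shows this is consistent \emph{only} when $p = \frac{2n-\sigma}{n-2s}$ and $q = \frac{n+2s-\sigma}{n-2s}$ (the scaling exponents must match), so that in the subcritical cases no such nonzero solution exists and we are forced back to $u \equiv 0$. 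The translation from classification of IE solutions to classification of classical solutions of \eqref{PDE} is then immediate from Theorem~\ref{equivalence}.
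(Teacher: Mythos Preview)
Your overall strategy matches the paper's exactly: Kelvin transform, moving spheres in integral form, critical radius, Li--Zhang calculus lemma. However, two points need correction.

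\textbf{Sign of the comparison.} You claim $w_\lambda := u_{x_0,\lambda} - u \le 0$ on $B_\lambda(x_0)\setminus\{x_0\}$ for $\lambda$ small, but the correct inequality (and the one the paper proves) is $w_\lambda \ge 0$ there. Your own heuristic (``blow-up of $(\lambda/|x-x_0|)^{n-2s}$ as $x\to x_0$, with $u$ bounded near $x_0$'') in fact forces $u_{x_0,\lambda}$ to dominate $u$ near $x_0$, not the other way around. Moreover, the version of the calculus lemma you quote (``$u_{x_0,\lambda}\le u$ outside the ball for all $x_0,\lambda$ implies $u$ constant'') is, via the Kelvin involution, equivalent to $u_{x_0,\lambda}\ge u$ \emph{inside} the ball for all $x_0,\lambda$ --- so your stated sign is internally inconsistent with the lemma you later invoke. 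Flip the sign throughout.

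\textbf{The dichotomy is not free.} You state the dichotomy as ``either $\bar\lambda(x_0)=+\infty$ for every $x_0$, or $\bar\lambda(x_0)<+\infty$ for some $x_0$,'' which is a tautology. What you actually need (and later tacitly use when you write ``doing this at every center $x_0$'') is the nontrivial alternative: either all critical scales are infinite or all are finite. The paper proves this as a separate lemma by reading off the asymptotic $\lim_{|x|\to\infty}|x|^{n-2s}u(x)$: if $\bar\lambda(z_0)<\infty$ at some $z_0$, the exact inversion symmetry there gives this limit equal to $\bar\lambda(z_0)^{\,n-2s}u(z_0)<\infty$, whereas $\bar\lambda(x_0)=+\infty$ at some other $x_0$ would force the same limit to be $+\infty$. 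Without this step you cannot apply the Li--Zhang characterization, which requires inversion symmetry at \emph{every} center.

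A minor remark on the subcritical case: the paper does not plug the bubble back into \eqref{IE} and check scaling, as you propose. Instead it observes that when $\tau>0$ or $\mu>0$ the integral identity for $w_{\bar\lambda}$ carries a strictly positive residual factor $(\lambda/|y-x_0|)^\tau$ or $(\lambda/|z-x_0|)^\mu$, so $w_{\bar\lambda}\equiv 0$ is impossible whenever $\bar\lambda<\infty$; this forces all $\bar\lambda(x_0)=+\infty$ directly, then the calculus lemma and the integrability $\int |x|^{-\sigma}u^p\,dx<\infty$ finish. Your scaling argument would also work, but only after the dichotomy lemma is in place.
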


As a consequence of the equivalence in Theorem \ref{equivalence} and the classification results for IEs \eqref{IE} in Theorem \ref{Thm}, we obtain the following classification results for PDEs \eqref{PDE}.
\begin{cor}\label{Cor}
Assume $n\geq2$, $0<s:=m+\frac{\alpha}{2}<\frac{n}{2}$, $0<\alpha\leq2$, $m\geq0$ is an integer, $0<\sigma<n$, $0<q\leq\frac{n+2s-\sigma}{n-2s}$, $0<p\leq\frac{2n-\sigma}{n-2s}$ if $\alpha\in(0,2)$ and $m\geq1$ \emph{or} $\alpha\in(0,2]$ and $m=0$, $1\leq p\leq\frac{2n-\sigma}{n-2s}$ if $\alpha=2$ and $m\geq1$. Suppose $u$ is a nonnegative classical solution to PDE \eqref{PDE}. In the critical case $p=\frac{2n-\sigma}{n-2s}$ and $q=\frac{n+2s-\sigma}{n-2s}$, we have either $u\equiv0$ or $u$ must have the following form
\begin{equation*}
	u(x)=\mu^{\frac{n-2s}{2}}Q\left(\mu(x-x_{0})\right) \qquad \text{for some} \,\,\, \mu>0 \,\,\, \text{and} \,\,\, x_{0}\in\mathbb{R}^{n},
\end{equation*}
where $Q(x)=\left(\frac{1}{R_{2s,n}I\left(\frac{\sigma}{2}\right)I\left(\frac{n-2s}{2}\right)}\right)^{\frac{n-2s}{2(n+2s-\sigma)}}\left(\frac{1}{1+|x|^{2}}\right)^{\frac{n-2s}{2}}$ with $I(\gamma):=\frac{\pi^{\frac{n}{2}}\Gamma\left(\frac{n-2\gamma}{2}\right)}{\Gamma(n-\gamma)}$ for $0<\gamma<\frac{n}{2}$. In the subcritical cases $p<\frac{2n-\sigma}{n-2s}$ or $q<\frac{n+2s-\sigma}{n-2s}$, the unique nonnegative classical solution to \eqref{PDE} is $u\equiv0$ in $\mathbb{R}^{n}$.
\end{cor}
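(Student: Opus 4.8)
The plan is to derive Corollary \ref{Cor} directly from the two structural results already established: the equivalence between PDE \eqref{PDE} and IE \eqref{IE} in Theorem \ref{equivalence}, and the classification of nonnegative solutions of \eqref{IE} in Theorem \ref{Thm}. No new analysis beyond a careful matching of hypotheses is needed. For the forward direction, I would start from a nonnegative classical solution $u$ of \eqref{PDE} under the stated assumptions and observe that the parameter ranges in Corollary \ref{Cor} are tailored so that \emph{both} theorems apply simultaneously: $n\geq2$, $0<s:=m+\frac{\alpha}{2}<\frac n2$, $0<\sigma<n$, $0<q\leq\frac{n+2s-\sigma}{n-2s}$, while the extra restriction $1\leq p$ in the case $\alpha=2$, $m\geq1$ is precisely the one required to invoke Theorem \ref{equivalence} (it is inherited from the super poly-harmonic Theorem \ref{Thm0}). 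Thus Theorem \ref{equivalence} yields that $u$ solves the integral equation \eqref{IE}, and since a classical solution is in particular continuous, $u\in C(\mathbb{R}^n)$, so Theorem \ref{Thm} applies. In the critical case $p=\frac{2n-\sigma}{n-2s}$, $q=\frac{n+2s-\sigma}{n-2s}$ this gives either $u\equiv0$ or the asserted bubble form $u(x)=\mu^{\frac{n-2s}{2}}Q(\mu(x-x_{0}))$; in the subcritical cases $p<\frac{2n-\sigma}{n-2s}$ or $q<\frac{n+2s-\sigma}{n-2s}$ it forces $u\equiv0$.

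It then remains to establish the converse in the critical case, namely that every rescaled bubble $u(x)=\mu^{\frac{n-2s}{2}}Q(\mu(x-x_{0}))$ is genuinely a nonnegative classical solution of \eqref{PDE}. For this I would first note that, by the defining value of the constant appearing in $Q$, the function $Q$ itself solves \eqref{IE} (this is exactly the computation underlying the critical case of Theorem \ref{Thm}), and that \eqref{IE} is invariant under the dilations $u\mapsto\mu^{\frac{n-2s}{2}}u(\mu\,\cdot)$ and under translations; hence every such rescaled and translated bubble solves \eqref{IE}. Then the ``vice versa'' part of Theorem \ref{equivalence} upgrades it to a classical solution of \eqref{PDE}. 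One should merely verify that $Q$ (which is smooth, positive, and decays like $|x|^{-(n-2s)}$ at infinity) satisfies the mild regularity and integrability conditions under which the converse direction of Theorem \ref{equivalence} is stated — in particular $Q\in\mathcal{L}_{\alpha}(\mathbb{R}^{n})$ and $Q\in C^{2m+[\alpha],\{\alpha\}+\epsilon}_{loc}(\mathbb{R}^{n})$ — which is immediate from the explicit formula for $Q$. Combining the two directions yields the stated classification, and since $\sigma<n$ forces $p=\frac{2n-\sigma}{n-2s}>1$ in the critical case, the constraint $1\leq p$ is automatically met there.

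The only point demanding attention is the bookkeeping of hypotheses across the three results — in particular the appearance of $1\leq p$ when $\alpha=2$, $m\geq1$, and the restriction $n\geq2$ inherited from Theorems \ref{Thm0} and \ref{equivalence} (as opposed to $n\geq1$ permitted in Theorem \ref{Thm}). This is a compatibility check rather than a substantive obstacle: all the analytic difficulty — the super poly-harmonic estimates, the passage from the differential to the integral formulation, and the moving-spheres classification — has already been carried out in Theorems \ref{Thm0}, \ref{equivalence} and \ref{Thm}, so the proof of Corollary \ref{Cor} is a short assembly of these ingredients.
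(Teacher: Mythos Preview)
Your proposal is correct and follows exactly the approach the paper intends: Corollary~\ref{Cor} is stated there simply ``as a consequence of the equivalence in Theorem~\ref{equivalence} and the classification results for IEs \eqref{IE} in Theorem~\ref{Thm}'', with no further proof given. Your careful hypothesis-matching and the remark on the converse direction via the ``vice versa'' clause of Theorem~\ref{equivalence} spell out precisely what the paper leaves implicit.
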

\begin{rem}
Our classification results in Corollary \ref{Cor} completely improve the classification results in Liu \cite{Liu}, Cao and Dai \cite{CD}, Dai and Qin \cite{DQ}, Dai, Fang and Qin \cite{DFQ} and Dai and Liu \cite{DL} to the full range of $n$, $s$, $\sigma$, $p$ and $q$.
\end{rem}

In the critical case $p=\frac{2n-\sigma}{n-2s}$ and $q=\frac{n+2s-\sigma}{n-2s}$, the classification of positive solutions to \eqref{PDE} would provide the best constants and extremal functions for the corresponding Hardy-Littlewood-Sobolev inequality (see \cite{Lieb}). We define the norm
\begin{equation}\label{0-3}
  \|u\|_{L^{V_{\sigma}}(\mathbb{R}^{n})}:=\left\|(V_{\sigma}\ast|u|^{\frac{2n-\sigma}{n-2s}})|u|^{\frac{2n-\sigma}{n-2s}}\right\|^{\frac{n-2s}{2(2n-\sigma)}}_{L^{1}(\mathbb{R}^{n})}
\end{equation}
for $0<\sigma<n$, $0<s<\frac{n}{2}$ with potential $V_{\sigma}=\frac{1}{|x|^{\sigma}}$. For any $u\in H^{s}(\mathbb{R}^{n})$, we have the following Hardy-Littlewood-Sobolev inequality (see \cite{Lieb,Stein})
\begin{equation}\label{HLS}
  \|u\|_{L^{V_{\sigma}}(\mathbb{R}^{n})}\leq S^{-1}_{\sigma,s,n}\|(-\Delta)^{\frac{s}{2}}u\|_{L^{2}(\mathbb{R}^{n})},
\end{equation}
where the best constant $S_{\sigma,s,n}$ is given by
\begin{equation}\label{S}
  S_{\sigma,s,n}=\inf_{u\in H^{s}(\mathbb{R}^{n}), \, u\neq0}\frac{\|(-\Delta)^{\frac{s}{2}}u\|_{L^{2}(\mathbb{R}^{n})}}{\|u\|_{L^{V_{\sigma}}(\mathbb{R}^{n})}}.
\end{equation}

When $p=\frac{2n-\sigma}{n-2s}$ and $q=\frac{n+2s-\sigma}{n-2s}$, the equation \eqref{PDE} is the corresponding Euler-Lagrange equation for the minimization problem described in \eqref{S}. By using the concentration-compactness arguments in \cite{L,L1} and the uniqueness of spherically symmetric positive solutions of the Euler-Lagrange equation \eqref{PDE} derived in Corollary \ref{Cor}, we obtain that
\begin{equation*}
  \mathcal{M}=\left\{e^{i\theta}\mu^{\frac{n-2s}{2}}Q\left(\mu(x-y)\right): \,\, \forall \, \theta\in(-\pi,\pi], \, \mu>0, \, y\in\mathbb{R}^{n}\right\}
\end{equation*}
is the set of minimizers for $S_{\sigma,s,n}$. Since minimization problem \eqref{S} can be attained by the extremal function $Q$, one can deduce from the definition of $S_{\sigma,s,n}$ and equation \eqref{PDE} that
\begin{equation}\label{indentity}
  \|(-\Delta)^{\frac{s}{2}}Q\|_{L^{2}(\mathbb{R}^{n})}=S_{\sigma,s,n}\|Q\|_{L^{V_{\sigma}}(\mathbb{R}^{n})}, \,\,\,\,\,\,\,\, \|(-\Delta)^{\frac{s}{2}}Q\|^{2}_{L^{2}(\mathbb{R}^{n})}=\|Q\|^{\frac{2(2n-\sigma)}{n-2s}}_{L^{V_{\sigma}}(\mathbb{R}^{n})},
\end{equation}
therefore, the best constant $S_{\sigma,s,n}$ for Hardy-Littlewood-Sobolev inequality \eqref{HLS} can be calculated explicitly as
\begin{equation}\label{bc}
  S_{\sigma,s,n}=\|(-\Delta)^{\frac{s}{2}}Q\|^{\frac{n+2s-\sigma}{2n-\sigma}}_{L^{2}(\mathbb{R}^{n})}=\|Q\|^{\frac{n+2s-\sigma}{n-2s}}_{L^{V_{\alpha}}(\mathbb{R}^{n})}.
\end{equation}

By further calculations and the accurate form of $Q$ , we can deduce from \eqref{bc} the following corollary.
\begin{cor}\label{Cor2}
The best constants in the Hardy-Littlewood-Sobolev inequality \eqref{HLS} is
\begin{equation}\label{Sharp}
   S_{\sigma,s,n}=\left[R_{2s,n}I\left(\frac{n-2s}{2}\right)\right]^{\frac{(n-2s)(n-\sigma)}{4s(2n-\sigma)}}
   \left[I\left(\frac{\sigma}{2}\right)\right]^{-\frac{n-2s}{2(2n-\sigma)}}\left(\widetilde{S}_{s,n}\right)^{-\frac{n(n+2s-\sigma)}{2s(2n-\sigma)}},
\end{equation}
where $\widetilde{S}_{s,n}:=\left(\frac{1}{2\sqrt{\pi}}\right)^{s}\left[\frac{\Gamma(n)}{\Gamma(\frac{n}{2})}\right]^{\frac{s}{n}}
\sqrt{\frac{\Gamma\left(\frac{n-2s}{2}\right)}{\Gamma\left(\frac{n+2s}{2}\right)}}$ is the best constant in the Sobolev inequality
\begin{equation}\label{Sobolev}
  \|f\|_{L^{\frac{2n}{n-2s}}(\mathbb{R}^{n})}\leq \widetilde{S}_{s,n}\|(-\Delta)^{\frac{s}{2}}f\|_{L^{2}(\mathbb{R}^{n})}
\end{equation}
for any $f\in H^{s}(\mathbb{R}^{n})$.
\end{cor}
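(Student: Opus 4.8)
The plan is to evaluate the closed expression \eqref{bc} for the sharp constant. Starting from $S_{\sigma,s,n}=\|Q\|_{L^{V_\sigma}}^{\frac{n+2s-\sigma}{n-2s}}$ together with the definition \eqref{0-3} of $\|\cdot\|_{L^{V_\sigma}}$ (and $Q\ge0$), one has, with $p:=\frac{2n-\sigma}{n-2s}$,
\begin{equation*}
S_{\sigma,s,n}=\left(\int_{\mathbb{R}^{n}}\left(\frac{1}{|x|^{\sigma}}\ast Q^{p}\right)(x)\,Q^{p}(x)\,dx\right)^{\frac{n+2s-\sigma}{2(2n-\sigma)}},
\end{equation*}
so the whole task is to evaluate this double integral for the explicit extremal $Q$ of Corollary \ref{Cor}. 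I would write $Q=c_{0}U$, where $U(x)=(1+|x|^{2})^{-\frac{n-2s}{2}}$ and $c_{0}=\bigl(R_{2s,n}\,I(\tfrac{\sigma}{2})\,I(\tfrac{n-2s}{2})\bigr)^{-\frac{n-2s}{2(n+2s-\sigma)}}$.

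Since $(n-2s)p=2n-\sigma$, one has $Q^{p}=c_{0}^{p}(1+|x|^{2})^{-\frac{2n-\sigma}{2}}$, and the classical Riesz--convolution identity for the conformal bubble,
\begin{equation*}
\int_{\mathbb{R}^{n}}\frac{(1+|y|^{2})^{-\frac{n+\gamma}{2}}}{|x-y|^{\,n-\gamma}}\,dy=I\!\left(\frac{n-\gamma}{2}\right)(1+|x|^{2})^{-\frac{n-\gamma}{2}}\qquad(0<\gamma<n),
\end{equation*}
applied with $\gamma=n-\sigma$, yields $\frac{1}{|x|^{\sigma}}\ast Q^{p}=c_{0}^{p}\,I(\tfrac{\sigma}{2})\,(1+|x|^{2})^{-\frac{\sigma}{2}}$. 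Hence
\begin{equation*}
\int_{\mathbb{R}^{n}}\Bigl(\frac{1}{|x|^{\sigma}}\ast Q^{p}\Bigr)Q^{p}\,dx=c_{0}^{2p}\,I\!\left(\tfrac{\sigma}{2}\right)\int_{\mathbb{R}^{n}}(1+|x|^{2})^{-n}\,dx=c_{0}^{2p}\,I\!\left(\tfrac{\sigma}{2}\right)\frac{\pi^{n/2}\Gamma(n/2)}{\Gamma(n)},
\end{equation*}
the last integral being computed by passing to polar coordinates and the Beta integral $\int_{0}^{\infty}t^{n/2-1}(1+t)^{-n}\,dt=B(\tfrac{n}{2},\tfrac{n}{2})$. (This convolution identity is classical and is also precisely the computation underlying the explicit form of $Q$ in Theorem \ref{Thm}.)

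It then remains to substitute and simplify. Using $c_{0}^{2p}=\bigl(R_{2s,n}\,I(\tfrac{\sigma}{2})\,I(\tfrac{n-2s}{2})\bigr)^{-\frac{2n-\sigma}{n+2s-\sigma}}$ and raising to the power $\frac{n+2s-\sigma}{2(2n-\sigma)}$ gives
\begin{equation*}
S_{\sigma,s,n}=\Bigl(R_{2s,n}\,I\bigl(\tfrac{n-2s}{2}\bigr)\Bigr)^{-\frac12}\Bigl[I\bigl(\tfrac{\sigma}{2}\bigr)\Bigr]^{-\frac{n-2s}{2(2n-\sigma)}}\left(\frac{\pi^{n/2}\Gamma(n/2)}{\Gamma(n)}\right)^{\frac{n+2s-\sigma}{2(2n-\sigma)}}.
\end{equation*}
To produce the form involving $\widetilde{S}_{s,n}$, I would record the elementary values $R_{2s,n}=\frac{\Gamma((n-2s)/2)}{\pi^{n/2}2^{2s}\Gamma(s)}$ and $I(\tfrac{n-2s}{2})=\frac{\pi^{n/2}\Gamma(s)}{\Gamma((n+2s)/2)}$, so that $R_{2s,n}\,I(\tfrac{n-2s}{2})=2^{-2s}\Gamma((n-2s)/2)/\Gamma((n+2s)/2)$, and then use the explicit value of $\widetilde{S}_{s,n}$ stated in the corollary to rewrite $\frac{\pi^{n/2}\Gamma(n/2)}{\Gamma(n)}=(\widetilde{S}_{s,n})^{-n/s}\bigl(R_{2s,n}\,I(\tfrac{n-2s}{2})\bigr)^{n/(2s)}$. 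Inserting this and collecting exponents: the power of $\widetilde{S}_{s,n}$ becomes $-\frac{n(n+2s-\sigma)}{2s(2n-\sigma)}$, the power of $I(\tfrac{\sigma}{2})$ remains $-\frac{n-2s}{2(2n-\sigma)}$, and the power of $R_{2s,n}\,I(\tfrac{n-2s}{2})$ becomes $-\frac12+\frac{n(n+2s-\sigma)}{4s(2n-\sigma)}=\frac{(n-2s)(n-\sigma)}{4s(2n-\sigma)}$, using the factorization $n^{2}-2ns-n\sigma+2s\sigma=(n-2s)(n-\sigma)$; this is precisely \eqref{Sharp}.

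As for the main obstacle: there is no serious analytic difficulty here. The only nontrivial input is the Riesz--convolution identity above, which is classical and is in any case the identity already behind the explicit form of $Q$; everything else is a Beta integral and algebra with Gamma functions. The delicate point is purely bookkeeping --- carrying the fractional exponents through accurately --- and in particular verifying the two cancellations $\frac{\pi^{n/2}\Gamma(n/2)}{\Gamma(n)}=(\widetilde{S}_{s,n})^{-n/s}\bigl(R_{2s,n}\,I(\tfrac{n-2s}{2})\bigr)^{n/(2s)}$ and $-\frac12+\frac{n(n+2s-\sigma)}{4s(2n-\sigma)}=\frac{(n-2s)(n-\sigma)}{4s(2n-\sigma)}$, which should be checked with care.
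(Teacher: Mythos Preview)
Your proposal is correct and follows exactly the route the paper indicates: the paper itself does not spell out the computation but simply says that Corollary~\ref{Cor2} follows ``by further calculations and the accurate form of $Q$'' from \eqref{bc}, and what you have written is precisely those calculations --- applying the convolution identity \eqref{formula} with $\gamma=\sigma/2$ to evaluate $\|Q\|_{L^{V_\sigma}}$, computing the Beta integral, and then performing the Gamma-function algebra to recast everything in terms of $\widetilde S_{s,n}$. All the intermediate identities you flag (the value of $R_{2s,n}I(\tfrac{n-2s}{2})$, the relation $\pi^{n/2}\Gamma(n/2)/\Gamma(n)=(\widetilde S_{s,n})^{-n/s}(R_{2s,n}I(\tfrac{n-2s}{2}))^{n/(2s)}$, and the exponent identity $-\tfrac12+\tfrac{n(n+2s-\sigma)}{4s(2n-\sigma)}=\tfrac{(n-2s)(n-\sigma)}{4s(2n-\sigma)}$) check out.
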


\subsection{Liouville type theorem in critical and super-critical order cases: $\frac{n}{2}<s<+\infty$}
Now we consider the following critical and super-critical order static Schr\"{o}dinger-Hartree-Maxwell type equations involving higher-order or higher-order fractional Laplacians
\begin{equation}\label{PDE-sc}
(-\Delta)^{s}u(x)=\left(\frac{1}{|x|^{\sigma}}\ast|u|^{p}\right)u^{q}(x) \,\,\,\,\,\,\,\,\,\,\,\, \text{in} \,\,\, \mathbb{R}^{n},
\end{equation}
where $n\geq3$, $\frac{n}{2}\leq s:=m+\frac{\alpha}{2}<+\infty$, $m\geq1$ is an integer, $0<\alpha\leq2$, $-\infty<\sigma<n$, $0<p<+\infty$ and $0<q<+\infty$.

As an immediate consequence of the super poly-harmonic properties in Theorem \ref{Thm0}, by arguments developed by Chen, Dai and Qin \cite{CDQ} (see also \cite{CDQ0}), we can establish Liouville type theorem for nonnegative solutions to \eqref{PDE-sc} in both critical and super-critical order cases $\frac{n}{2}\leq s<+\infty$. For the particular case $\alpha=0$, Liouville type theorems for integer higher-order H\'{e}non-Hardy type equations in $\mathbb{R}^{n}$ or $\mathbb{R}^{n}_{+}$ have been derived by Chen, Dai and Qin \cite{CDQ} and Dai and Qin \cite{DQ3} in both critical and super-critical order cases. In \cite{CDQ0}, Cao, Dai and Qin extended the results in \cite{CDQ} to general fractional higher-order cases $0<\alpha<2$ and general nonlinearities $f(x,u,Du,\cdots)$.

For the critical and super-critical order cases, we have the following Liouville type theorem for nonnegative solutions to \eqref{PDE-sc}.
\begin{thm}\label{Thm3}
Assume $n\geq3$, $\frac{n}{2}\leq s:=m+\frac{\alpha}{2}<+\infty$, $0<\alpha\leq2$, $m\geq1$ is an integer, $-\infty<\sigma<n$, $0<q<+\infty$, $0<p<+\infty$ if $0<\alpha<2$, $1\leq p<+\infty$ if $\alpha=2$. Suppose that $u$ is a nonnegative classical solution to \eqref{PDE-sc}, then $u\equiv0$ in $\mathbb{R}^{n}$.
\end{thm}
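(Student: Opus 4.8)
The plan is to reduce the critical and super-critical order case to a contradiction by combining the super poly-harmonic estimates of Theorem \ref{Thm0} with a spherical-average/iteration argument in the spirit of Chen, Dai and Qin \cite{CDQ} and Cao, Dai and Qin \cite{CDQ0}. First I would record that, since $u$ is a nonnegative classical solution to \eqref{PDE-sc} and $u\not\equiv0$ would force $(-\Delta)^{s}u$ to be nonnegative and not identically zero (the nonlocal convolution $\frac{1}{|x|^\sigma}\ast|u|^p$ is strictly positive wherever $u^q>0$ on a set of positive measure), Theorem \ref{Thm0} gives $(-\Delta)^{i+\frac{\alpha}{2}}u\ge0$ in $\mathbb{R}^n$ for $i=0,1,\dots,m-1$, and in particular the full Laplacian chain $v_i:=(-\Delta)^{i}(-\Delta)^{\frac\alpha2}u$ is nonnegative for $i=0,\dots,m-1$, while $(-\Delta)v_{m-1}=(-\Delta)^s u\ge0$. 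Thus $(-\Delta)^{\frac\alpha2}u$ is super-$(m-1)$-harmonic, hence (via repeated use of the sub-mean-value property and the fact that a nonnegative super-harmonic function on $\mathbb{R}^n$ with $n\ge3$ whose sphere averages are bounded below must in fact be constant, or more precisely a careful ODE analysis of the radial averages) one deduces a lower bound of the form $\overline{(-\Delta)^{\frac\alpha2}u}(r)\ge c>0$ eventually, or else $(-\Delta)^{\frac\alpha2}u\equiv$ const $\ge0$.

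Next I would separate the two cases $\alpha=2$ and $0<\alpha<2$. When $\alpha=2$, $s=m+1$ is an integer and everything is local: one works with the sphere averages $\overline{u}(r)$, $\overline{-\Delta u}(r)$, …, $\overline{(-\Delta)^{m}u}(r)$, uses the elementary identity relating $\overline{(-\Delta)^{j}u}$ to $-\frac{1}{r^{n-1}}\big(r^{n-1}\overline{(-\Delta)^{j-1}u}\,'\big)'$, and the super poly-harmonicity plus $(-\Delta)^{s}u\ge0$ to run the standard iteration: integrating outward shows each $\overline{(-\Delta)^{j}u}(r)$ grows (or at least does not decay) in a way that is incompatible with $u$ remaining a nonnegative function unless all of them vanish, forcing $(-\Delta)^{j}u\equiv0$ for all $j$ and finally $u\equiv\text{const}$; plugging a nonnegative constant into \eqref{PDE-sc} with $\sigma<n$ makes the right-hand side infinite unless $u\equiv0$. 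When $0<\alpha<2$, the operator $(-\Delta)^{\frac\alpha2}$ is nonlocal and the radial average $\overline{(-\Delta)^{\frac\alpha2}u}(r)$ cannot be expanded pointwise; here I would invoke exactly the machinery of \cite{CDQ0}: set $w:=(-\Delta)^{\frac\alpha2}u\ge0$, note $(-\Delta)^{m}w=(-\Delta)^s u\ge0$ so $w$ is super-$m$-harmonic and (by the integer-order argument just described applied to $w$) either $w\equiv0$ or $\overline w(r)\gtrsim r^{-(n-2m)}$-type lower bounds hold; then use the Poisson representation formula for $(-\Delta)^{\frac\alpha2}$ together with the integral estimates on $\int_R^{+\infty}\frac{R^{\alpha}}{r(r^2-R^2)^{\alpha/2}}\overline u(r)\,dr$ from \cite{CDQ0} to propagate this into a lower bound on $\overline u$ that again contradicts the finiteness/positivity constraints coming from \eqref{PDE-sc}, unless $u\equiv0$.

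Finally I would close the argument by the standard endpoint reasoning: if $w=(-\Delta)^{\frac\alpha2}u$ turns out to be a positive constant $c>0$, then $u$ grows like $c\,|x|^{\alpha}$ at infinity (or is otherwise unbounded below-controlled), which is inconsistent with $u\ge0$ being a classical solution whose right-hand side $\big(\frac{1}{|x|^\sigma}\ast|u|^p\big)u^q$ is then not even locally integrable near the points where $u$ would have to vanish — more cleanly, one shows $\int_{\mathbb{R}^n}\frac{u^p(z)}{|y-z|^\sigma}\,dz<+\infty$ is forced on the support, pinning down enough decay of $u$ to contradict the growth $u\gtrsim|x|^\alpha$; hence $c=0$, i.e. $(-\Delta)^{\frac\alpha2}u\equiv0$, and then $u$ is a nonnegative $\alpha/2$-harmonic (so, for $0<\alpha<2$, constant by the Liouville theorem for the fractional Laplacian) function, and plugging $u\equiv\text{const}\ge0$ into \eqref{PDE-sc} with $0<\sigma<n$ gives $u\equiv0$. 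The main obstacle is the case $0<\alpha<2$: controlling the nonlocal term $(-\Delta)^{\frac\alpha2}u$ through the Poisson/Riesz estimates and the outward-integration iteration so that the lower bounds on $\overline{(-\Delta)^{\frac\alpha2}u}$ really do translate into contradictory lower bounds on $\overline u$ — this is precisely where the delicate estimates of \cite{CDQ0} enter, and one must check that the hypotheses there ($n\ge3$, $\frac n2\le s<\infty$, arbitrary $\sigma<n$, $0<p,q<\infty$) are met by the present setting so that their Liouville machinery applies verbatim.
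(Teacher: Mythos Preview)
Your proposal contains a genuine gap and is not the route the paper takes. The spherical-average iteration you describe is the machinery of Section~2 (the proof of Theorem~\ref{Thm0}), and it works there only because one \emph{assumes} $u_m(x_0)<0$ at some point: after averaging this yields $\overline{u_m}(r)\le -c_0<0$, and the alternating-sign ODE iteration then produces polynomial growth that eventually contradicts $u\ge0$ or $u\in\mathcal{L}_{\alpha}$. In the setting of Theorem~\ref{Thm3} you already have $(-\Delta)^{i+\frac{\alpha}{2}}u\ge0$ for all $i$ from Theorem~\ref{Thm0}, so the averaging argument gives the \emph{opposite} information: each $r\mapsto\overline{(-\Delta)^{j}u}(r)$ is \emph{non-increasing}, not growing. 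There is no sign flip to start the growth cascade, and non-increasing nonnegative sphere averages are perfectly compatible with $u\ge0$; your sentence ``integrating outward shows each $\overline{(-\Delta)^{j}u}(r)$ grows'' is simply false. The claimed dichotomy ``either $w\equiv0$ or $\overline w(r)\gtrsim r^{-(n-2m)}$'' for $w=(-\Delta)^{\frac{\alpha}{2}}u$ is likewise not delivered by the sphere-average ODE from nonnegativity alone, nor is the assertion that a nonnegative super-harmonic function on $\mathbb{R}^n$ whose sphere averages are bounded below must be constant (take any Newtonian potential of a nontrivial nonnegative compactly supported measure).

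What the paper actually does in Section~5 is convert the super poly-harmonic inequalities into \emph{integral representations}, exactly as in the equivalence argument of Section~3: using the Green's function on balls together with the Liouville theorem for (fractional) harmonic functions one shows inductively that
\[
(-\Delta)^{m-k+\frac{\alpha}{2}}u(x)=\int_{\mathbb{R}^{n}}\frac{R_{2,n}}{|x-y|^{n-2}}f_{k}(u)(y)\,dy
\]
for $k=1,\dots$ up to the step where the total Riesz order would reach $n$, and an inequality at the final step. Evaluating at $x=0$, composing the Riesz kernels via the semigroup formula \eqref{3-27}, and using the finiteness $\int_{\mathbb{R}^n}\big(|\cdot|^{-\sigma}\ast|u|^{p}\big)(z)u^{q}(z)|z|^{-(n-2)}dz<\infty$ obtained at the first step, one lands on an inequality of the form
\[
+\infty>u(0)\quad\text{or}\quad(-\Delta)^{m-\lceil\frac{n}{2}\rceil+\frac{\alpha}{2}}u(0)\ \ge\ C\int_{|y|\ge R_0}\frac{dy}{|y|^{\kappa}\,\ln|y|}
\]
with $\kappa\le n$, and the right-hand side diverges. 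That logarithmically divergent Riesz-potential integral at the threshold $2s\ge n$ is the missing idea in your plan; the spherical-average iteration does not see it, and your endpoint discussion (``$(-\Delta)^{\frac{\alpha}{2}}u\equiv c>0$ forces $u\gtrsim|x|^{\alpha}$'') is neither reached nor correct as stated.
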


\begin{rem}\label{rem2}
In the fractional higher-order cases $0<\alpha<2$, Theorem \ref{Thm3} can be deduced from Theorem 1.14 in \cite{CDQ0} directly. In the integer higher-order cases $\alpha=2$, the results in Theorem \ref{Thm3} are new. We will give a unified proof for Theorem \ref{Thm3} in both integer and fractional higher-order cases in Section 5.
\end{rem}

This paper is organized as follows. In Section 2, we will carry out our proof of Theorem \ref{Thm0}. In Section 3, we will prove Theorem \ref{equivalence}. Section 4 and 5 are devoted to proving Theorem \ref{Thm} and Theorem \ref{Thm3} respectively.

Throughout this paper, we will use $C$ to denote a general positive constant that may depend on $n$, $\alpha$, $m$, $\sigma$, $p$, $q$ and $u$, and whose value may differ from line to line.

\section{Super poly-harmonic properties}
In this section, we show super poly-harmonic properties for nonnegative classical solutions to equations \eqref{PDE}, i.e. Theorem \ref{Thm0}.

Assume $n\geq2$, $0<s:=m+\frac{\alpha}{2}<+\infty$, $0<\alpha\leq2$, $m\geq1$ is an integer, $-\infty<\sigma<n$, $0<q<+\infty$, $0<p<+\infty$ if $0<\alpha<2$, $1\leq p<+\infty$ if $\alpha=2$. Suppose that $u$ is a nonnegative classical solution to \eqref{PDE}. It follows from equations \eqref{PDE} that $\int_{\mathbb{R}^{n}}\frac{u^{p}(x)}{|x|^{\sigma}}dx<+\infty$. Let $u_1(x):=(-\Delta)^{\frac{\alpha}{2}}u(x)$ and $u_i(x):=(-\Delta)^{i-1}u_1(x)$ for $i=2,\cdots, m$. Then, from equations \eqref{PDE}, we have
\begin{equation}\label{2-1}
\left\{{\begin{array}{l} {(-\Delta)^{\frac{\alpha}{2}} u(x)=u_1(x) \quad \text{in} \,\, \mathbb{R}^{n},} \\  {} \\ {-\Delta u_{1}(x)=u_{2}(x) \quad \text{in} \,\, \mathbb{R}^{n},} \\ \cdots\cdots \\ {-\Delta u_{m}(x)=\Big(\frac{1}{|x|^{\sigma}}\ast|u|^{p}\Big)u^{q}(x)\geq0 \quad \text{in} \,\, \mathbb{R}^{n}.} \\ \end{array}}\right.
\end{equation}
Our aim is to prove that $u_i\geq 0$ in $\mathbb{R}^n$ for every $i=1,\cdots,m$. We will carry out the proof by discussing two different cases $\alpha=2$ and $0<\alpha<2$ separately.

\emph{Case(i).} We first consider the case that $\alpha=2$.

This case is more simpler, since we do not need to deal with the nonlocal fractional Laplacians. We first show that $u_m=(-\Delta)^{m}u\geq0$ by contradiction arguments. Indeed, suppose not, then there is a point $x_0\in\mathbb{R}^{n}$ such that $u_m(x_0)<0$. Without loss of generality, we may assume $x_0=0$ due to the translation invariance of equations \eqref{PDE}.

Now, let
\begin{equation}\label{2-2}
\bar{f}(r)=\bar{f}\big(|x|\big):=\frac{1}{|\partial B_{r}(0)|}\int_{\partial B_{r}(0)}f(x)d\sigma,
\end{equation}
be the spherical average of $f$ with respect to the center $0$. Then, by the well-known property $\overline{\Delta u}=\Delta\bar{u}$ and \eqref{2-1}, we have, for any $r\geq0$,
\begin{equation}\label{2-3}
\left\{{\begin{array}{l} {-\Delta\overline u(r)=\overline{u_1}(r)}, \\  {} \\ {-\Delta\overline{u_{1}}(r)=\overline{u_{ 2}}(r)}, \\ \cdots\cdots \\ {-\Delta\overline{u_{m}}(r)=\overline{\Big(\frac{1}{|x|^{\sigma}}\ast|u|^{p}\Big)u^{q}}(r)\geq0}. \\ \end{array}}\right.
\end{equation}

From last equation of \eqref{2-3}, one has
\begin{equation}\label{2-4}
-\frac{1}{r^{n-1}}\Big(r^{n-1}\overline{u_{ m}}\,'(r)\Big)'\geq0, \qquad \forall \, r\geq0.
\end{equation}
Integrating both sides of \eqref{2-4} twice gives
\begin{equation}\label{2-5}
 \overline{u_{m}}(r)\leq\overline{u_{m}}(0)=u_{m}(0)=:-c_{0}<0,
\end{equation}
for any $r\geq0$. Then from the last but one equation of \eqref{2-3} we derive
\begin{equation}\label{2-6}
-\frac{1}{r^{n-1}}\Big(r^{n-1}\overline{u_{m-1}}\,'(r)\Big)'\leq-c_0, \qquad \forall \, r\geq0.
\end{equation}
Again, by integrating both sides of \eqref{2-6} twice, we arrive at
\begin{equation}\label{2-7}
\overline{u_{m-1}}(r)\geq\overline{u_{m-1}}(0)+c_1 r^2, \qquad \forall \, r\geq0,
\end{equation}
where $c_1=\frac{c_0}{2n}>0$. Continuing this way, we finally obtain that
\begin{equation}\label{2-8}
{(-1)}^{(m+1)}\overline{u}(r)\geq a_{m}r^{2m}+a_{m-1}r^{2(m-1)}+\cdots+a_0, \qquad \forall \, r\geq0,
\end{equation}
where $a_m>0$. If $m$ is even, then \eqref{2-8} implies $\overline{u}(r)<0$ for $r$ sufficiently large, this contradicts with $u\geq0$ in $\mathbb{R}^{n}$. In the case that $m$ is odd, we can see from \eqref{2-8} that there exists $R_0>1$ sufficient large such that $\overline{u}(r)\geq Cr^{2m}$ for all $r>R_0$. Obviously, this will contradict with the integrability $\int_{\mathbb{R}^n}\frac{u^{p}(x)}{|x|^{\sigma}}<+\infty$. In fact, since $p\geq1$, one has
\begin{align}\label{2-0}
+\infty>\int_{\mathbb{R}^n}\frac{u^{p}(x)}{|x|^{\sigma}}dx&=C\int_{0}^{+\infty}\overline{u^p}(r)r^{n-\sigma-1}dr \\
&\geq C\int_{R_{0}}^{+\infty}{\overline{u}}^{p}(r)r^{n-\sigma-1}dr\nonumber \\
&\geq C\int_{R_{0}}^{+\infty}r^{2pm-1}dr=+\infty,\nonumber
\end{align}
which is absurd. Therefore, we must have $u_m=(-\Delta)^{m}u\geq0$. Next, we can prove that $u_{m-1}\geq0$ though entirely similar procedure as above. Continuing this way, we obtain that $u_i=(-\Delta)^{i}u\geq0$ for every $i=1,2,\cdots,m$.

\emph{Case(ii).} Next, we consider the cases that $0<\alpha<2$.

In such cases, we will first prove $u_m\geq0$ by contradiction arguments. If not, then there exists $x_0\in\mathbb{R}^n$ such that $u_m(x_0)<0$. We can still assume that $x_0=0$. As in the proof of Case (i) ($\alpha=2$), by taking spherical average w.r.t. center $0$ to all equations except the first equation in \eqref{2-1}, we have
\begin{equation}\label{2-9}
\left\{{\begin{array}{l} {{(-\Delta)}^{\frac{\alpha}{2}}u(x)=u_1(x) \quad \text{in} \,\, \mathbb{R}^{n},} \\  {} \\ {-\Delta\overline{u_{1}}(r)=\overline{u_{2}}(r), \quad \forall \, r\geq0,} \\ \cdots\cdots \\ {-\Delta\overline{u_{m}}(r)=\overline{\Big(\frac{1}{|x|^{\sigma}}\ast|u|^{p}\Big)u^{q}}(r)\geq0, \quad \forall \, r\geq0.} \\ \end{array}}\right.
\end{equation}
Through a similar argument as in deriving \eqref{2-8}, we deduce that
\begin{equation}\label{2-19}
(-1)^{m}\overline{u_1}(r)\geq a_{m-1}r^{2(m-1)}+\cdots+a_0, \qquad \forall \, r\geq0,
\end{equation}
where $a_{m-1}>0$. Hence, we have
\begin{equation}\label{2-10}
{(-1)}^{m}\overline{u_1}(r)\geq Cr^{2(m-1)},
\end{equation}
for any $r>R_0$ with $R_0$ sufficiently large. One should observe that it is very difficult to take spherical average to the first equation in \eqref{2-9}, since the fractional Laplacian $(-\Delta)^{\frac{\alpha}{2}}$ is a nonlocal operator. Inspired by a recent work by Cao, Dai and Qin \cite{CDQ0}, instead of taking spherical average, we will apply the Green-Poisson representation formulae for $(-\Delta)^{\frac{\alpha}{2}}$ to the first equation of \eqref{2-9} to overcome this difficulty. From the first equation in \eqref{2-9}, we conclude that, for arbitrary $R>0$,
\begin{equation}\label{2-20}
  u(x)=\int_{B_{R}(0)}G^\alpha_R(x,y)u_{1}(y)dy+\int_{|y|>R}P^{\alpha}_{R}(x,y)u(y)dy, \qquad \forall \,\, x\in B_{R}(0),
\end{equation}
where the Green's function for $(-\Delta)^{\frac{\alpha}{2}}$ with $0<\alpha<2$ on $B_R(0)$ is given by
\begin{equation}\label{2-21}
G^\alpha_R(x,y):=\frac{C_{n,\alpha}}{|x-y|^{n-\alpha}}\int_{0}^{\frac{t_{R}}{s_{R}}}\frac{b^{\frac{\alpha}{2}-1}}{(1+b)^{\frac{n}{2}}}db
\,\,\,\,\,\,\,\,\, \text{if} \,\, x,y\in B_{R}(0)
\end{equation}
with $s_{R}=\frac{|x-y|^{2}}{R^{2}}$, $t_{R}=\left(1-\frac{|x|^{2}}{R^{2}}\right)\left(1-\frac{|y|^{2}}{R^{2}}\right)$, and $G^{\alpha}_{R}(x,y)=0$ if $x$ or $y\in\mathbb{R}^{n}\setminus B_{R}(0)$ (see \cite{K}), and the Poisson kernel $P^{\alpha}_{R}(x,y)$ for $(-\Delta)^{\frac{\alpha}{2}}$ in $B_{R}(0)$ is defined by $P^{\alpha}_{R}(x,y):=0$ for $|y|<R$ and
\begin{equation}\label{2-22}
  P^{\alpha}_{R}(x,y):=\frac{\Gamma(\frac{n}{2})}{\pi^{\frac{n}{2}+1}}\sin\frac{\pi\alpha}{2}\left(\frac{R^{2}-|x|^{2}}{|y|^{2}-R^{2}}\right)^{\frac{\alpha}{2}}\frac{1}{|x-y|^{n}}
\end{equation}
for $|y|>R$ (see \cite{CLM}). Taking $x=0$ in \eqref{2-20} gives
\begin{align}\label{2-11}
u(0)&=\int_{B_R(0)}\frac{C_{n,\alpha}}{|y|^{n-\alpha}}\left(\int_0^{\frac{R^2}{|y|^2}-1}\frac{b^{\frac{\alpha}{2}-1}}{(1+b)^{\frac{n}{2}}}db\right)u_1(y)dy +C'_{n,\alpha}\int_{|y|>R}\frac{R^\alpha}{{(|y|^2-R^2)}^{\frac{\alpha}{2}}}\frac{u(y)}{|y|^n}dy\\
&=\int_{0}^{R}\frac{\widetilde{C}_{n,\alpha}}{r^{1-\alpha}}\left(\int_0^{\frac{R^2}{r^2}-1} \frac{b^{\frac{\alpha}{2}-1}}{(1+b)^{\frac{n}{2}}}db\right)\overline{u_1}(r)dr +\overline{C}_{n,\alpha}\int_{R}^{+\infty}\frac{R^\alpha\bar{u}(r)}{r{(r^2-R^2)}^{\frac{\alpha}{2}}}dr. \nonumber
\end{align}
One can easily observe that for $0<r\leq\frac{R}{2}$, $\frac{R^2}{r^2}-1\geq3$ and hence $\int_{0}^{\frac{R^2}{r^2}-1}\frac{b^{\frac{\alpha}{2}-1}}{(1+b)^{\frac{n}{2}}}db\geq\int_{0}^{3}\frac{b^{\frac{\alpha}{2}-1}}{(1+b)^{\frac{n}{2}}}db=:C_1>0$. For $\frac{R}{2}<r<R$, one has $0<\frac{R^2}{r^2}-1<3$, thus $\int_{0}^{\frac{R^2}{r^2}-1}\frac{b^{\frac{\alpha}{2}-1}}{(1+b)^{\frac{n}{2}}}db>\int_{0}^{\frac{R^2}{r^2}-1}\frac{b^{\frac{\alpha}{2}-1}}{2^n}db
=:C_2{\left(\frac{R^2-r^2}{r^2}\right)}^{\frac{\alpha}{2}}$. Thus, we conclude that
\begin{equation}\label{2-12}
\int_0^{\frac{R^2}{r^2}-1}\frac{b^{\frac{\alpha}{2}-1}}{(1+b)^{\frac{n}{2}}}db\geq C_1\chi_{0<r\leq\frac{R}{2}}+C_2\chi_{\frac{R}{2}<r<R}{\left(\frac{R^2-r^2}{r^2}\right)}^{\frac{\alpha}{2}}
\end{equation}
for any $0<r<R$. Then, from \eqref{2-10}, \eqref{2-11} and \eqref{2-12}, we derive that, for $R>2R_0$,
\begin{align}\label{2-13}
&\quad{(-1)}^{(m+1)}\int_{R}^{+\infty}  \frac{R^\alpha\bar{u}(r)}{r{(r^2-R^2)}^{\frac{\alpha}{2}}} dr \\
&=C\int_0^R \frac{1}{r^{1-\alpha}}\left(\int_0^{\frac{R^2}{r^2}-1} \frac{b^{\frac{\alpha}{2}-1}}{(1+b)^{\frac{n}{2}}}db\right) {(-1)}^{m}\overline{u_1}(r)dr+{(-1)}^{(m+1)}u(0)\nonumber\\
&\geq C\int_{R_0}^R \frac{1}{r^{1-\alpha}}\left[C_1 \chi_{0<r<\frac{R}{2}}
+C_2\chi_{\frac{R}{2}<r<R}{\left(\frac{R^2-r^2}{r^2}\right)}^{\frac{\alpha}{2}}\right] r^{2(m-1)}dr\nonumber \\
&\,\,\,\,\,\,\,+C\int_{0}^{R_0}\frac{1}{r^{1-\alpha}}\left(\int_{0}^{\frac{R^2}{r^2}-1}\frac{b^{\frac{\alpha}{2}-1}}{(1+b)^{\frac{n}{2}}}db\right)
(-1)^{m}\overline{u_1}(r)dr+{(-1)}^{(m+1)}u(0) \nonumber\\
&\geq CR^{2m-2+\alpha}-C_0+{(-1)}^{(m+1)}u(0)\nonumber.
\end{align}
It is easy to see that if $m$ is even, \eqref{2-13} implies that $\int_{R}^{+\infty}\frac{R^\alpha\bar{u}(r)}{r{(r^2-R^2)}^{\frac{\alpha}{2}}}dr<0$ for $R$ sufficiently large, which contradicts with $u\geq0$. Therefore, we only need to consider the case that $m$ is odd. In such cases, \eqref{2-13} implies
\begin{equation}\label{2-14}
\int_{R}^{+\infty}\frac{R^\alpha\bar{u}(r)}{r{(r^2-R^2)}^{\frac{\alpha}{2}}} dr\geq CR^{2m-2+\alpha}=CR^{2s-2}
\end{equation}
for $R$ sufficiently large. Since $u\in \mathcal{L}_{\alpha}(\mathbb{R}^{n})$, we have
\begin{equation} \label{2-15}
\int_{|x|>1}\frac{u(x)}{|x|^{n+\alpha}}dx=C\int_1^{+\infty} \frac{\bar{u}(r)}{r^{1+\alpha}}dr<+\infty.
\end{equation}
Then, by \eqref{2-15} and the fact that $2s-2\geq \alpha$, for $R$ sufficiently large, we have
\begin{align}\label{2-16}
\int_{R}^{2R}\frac{R^\alpha\bar{u}(r)}{r{(r^2-R^2)}^{\frac{\alpha}{2}}} dr
&\geq CR^{2s-2}-\int_{2R}^{+\infty} \frac{R^\alpha\bar{u}(r)}{r{(r^2-R^2)}^{\frac{\alpha}{2}}} dr\\
&\geq CR^{2s-2}-C'R^\alpha\int_{2R}^{+\infty} \frac{\bar{u}(r)}{r^{1+\alpha}}dr \nonumber\\
&\geq CR^{2s-2}-o_R(1)R^\alpha\nonumber\\
&\geq CR^{2s-2}.\nonumber
\end{align}
On the one hand, by \eqref{2-15}, we have
\begin{align}\label{2-17}
\int_1^{+\infty}\frac{1}{R^{1+\alpha}}\int_R^{2R}\frac{R^{\alpha}\bar{u}(r)}{r{(r^2-R^2)}^{\frac{\alpha}{2}}}drdR
&=\int_1^{+\infty}\frac{\bar{u}}{r}\int_{\frac{r}{2}}^{r}\frac{1}{R{(r^2-R^2)}^{\frac{\alpha}{2}}}dRdr\\
&\leq C\int_1^{+\infty}\frac{\bar{u}(r)}{r^{1+\alpha}}dr<+\infty.\nonumber
\end{align}
On the other hand, by \eqref{2-16}, we derive
\begin{equation}\label{2-18}
\int_1^{+\infty}\frac{1}{R^{1+\alpha}}\int_R^{2R}\frac{R^{\alpha}\bar{u}(r)}{r{(r^2-R^2)}^{\frac{\alpha}{2}}} drdR\geq\int_{N}^{+\infty}\frac{1}{R}dR=+\infty,
\end{equation}
where $N$ is sufficiently large such that \eqref{2-16} holds for any $R>N$. Combining \eqref{2-17} with \eqref{2-18}, we get a contradiction. Hence, we must have $u_m\geq 0$ in $\mathbb{R}^n$. One should observe that, in the proof of $u_{m}\geq0$, we have mainly used the property $-\Delta u_m\geq 0$. Therefore, through a similar argument as above, one can prove that $u_{m-1}\geq 0$. Continuing this way, we obtain that $u_i=(-\Delta)^{i-1+\frac{\alpha}{2}}u\geq0$ for every $i=1,2,\cdots,m$. This completes the proof of Theorem \ref{Thm0}.

\section{equivalence between PDE and IE}
In this section, we will prove the equivalence between PDEs \eqref{PDE} and IEs \eqref{IE}, i.e. Theorem \ref{equivalence}. We only need to show that any nonnegative classical solution $u$ to \eqref{PDE} also satisfies the integral equation \eqref{IE}. From the super poly-harmonic properties in Theorem \ref{Thm0}, we have already known that $u_i:=(-\Delta)^{i-1+\frac{\alpha}{2}}u\geq 0$ in $\mathbb{R}^{n}$ for every $i=1,\cdots,m$.

We will first show that $u_m=(-\Delta)^{m-1+\frac{\alpha}{2}}u$ satisfies the following integral equation
\begin{equation}\label{3-1}
u_m(x)=\int_{\mathbb{R}^{n}}\frac{R_{2,n}}{|x-y|^{n-2}}\left(\frac{1}{|\cdot|^{\sigma}}\ast|u|^{p}\right)(y)u^{q}(y)dy, \,\,\,\,\,\,\,\,\,\, \forall \,\, x\in\mathbb{R}^{n},
\end{equation}
where the Riesz potential's constants $R_{\gamma,n}:=\frac{\Gamma(\frac{n-\gamma}{2})}{\pi^{\frac{n}{2}}2^{\gamma}\Gamma(\frac{\gamma}{2})}$ for $0<\gamma<n$ (see \cite{Stein}).

To this end, for arbitrary $R>0$, let $f_{1}(u)(x):=\left(\frac{1}{|\cdot|^{\sigma}}\ast|u|^{p}\right)(x)u^{q}(x)$ and
\begin{equation}\label{2c2}
v_{1}^{R}(x):=\int_{B_R(0)}G^{2}_R(x,y)f_{1}(u)(y)dy,
\end{equation}
where the Green's function for $-\Delta$ on $B_R(0)$ is given by
\begin{equation}\label{Green}
  G^{2}_R(x,y)=R_{2,n}\left[\frac{1}{|x-y|^{n-2}}-\frac{1}{\left(|x|\cdot\Big|\frac{Rx}{|x|^{2}}-\frac{y}{R}\Big|\right)^{n-2}}\right], \,\,\,\,\,\, \text{if} \,\, x,y\in B_{R}(0),
\end{equation}
and $G^{2}_{R}(x,y)=0$ if $x$ or $y\in\mathbb{R}^{n}\setminus B_{R}(0)$. Then, we can derive that $v_{1}^{R}\in C^{2}(\mathbb{R}^{n})$ and satisfies
\begin{equation}\label{2c3}\\\begin{cases}
-\Delta v_{1}^{R}(x)=\left(\frac{1}{|\cdot|^{\sigma}}\ast|u|^{p}\right)(x)u^{q}(x),\ \ \ \ x\in B_R(0),\\
v_{1}^{R}(x)=0,\ \ \ \ \ \ \ x\in \mathbb{R}^{n}\setminus B_R(0).
\end{cases}\end{equation}
Let $w_{1}^R(x):=(-\Delta)^{m-1+\frac{\alpha}{2}}u(x)-v_{1}^R(x)$. By Theorem \ref{Thm0}, \eqref{PDE} and \eqref{2c3}, we have $w_{1}^R\in C^{2}(\mathbb{R}^{n})$ and satisfies
\begin{equation}\label{2c4}\\\begin{cases}
-\Delta w_{1}^R(x)=0,\ \ \ \ x\in B_R(0),\\
w_{1}^{R}(x)\geq0, \,\,\,\,\, x\in \mathbb{R}^{n}\setminus B_R(0).
\end{cases}\end{equation}
By maximum principle, we deduce that for any $R>0$,
\begin{equation}\label{2c5}
  w_{1}^R(x)=(-\Delta)^{m-1+\frac{\alpha}{2}}u(x)-v_{1}^{R}(x)\geq0, \,\,\,\,\,\,\, \forall \,\, x\in\mathbb{R}^{n}.
\end{equation}
Now, for each fixed $x\in\mathbb{R}^{n}$, letting $R\rightarrow\infty$ in \eqref{2c5}, we have
\begin{equation}\label{2c6}
(-\Delta)^{m-1+\frac{\alpha}{2}}u(x)\geq\int_{\mathbb{R}^{n}}\frac{R_{2,n}}{|x-y|^{n-2}}f_{1}(u)(y)dy=:v_{1}(x)\geq0.
\end{equation}
Take $x=0$ in \eqref{2c6}, we get
\begin{equation}\label{2c7}
  \int_{\mathbb{R}^{n}}\left(\frac{1}{|\cdot|^{\sigma}}\ast|u|^{p}\right)(y)\frac{u^{q}(y)}{|y|^{n-2}}dy<+\infty.
\end{equation}
One can easily observe that $v_{1}\in C^{2}(\mathbb{R}^{n})$ is a solution of
\begin{equation}\label{2c8}
-\Delta v_{1}(x)=\left(\frac{1}{|x|^{\sigma}}\ast|u|^{p}\right)u^{q}(x),  \,\,\,\,\,\,\, x\in \mathbb{R}^n.
\end{equation}
Define $w_{1}(x):=(-\Delta)^{m-1+\frac{\alpha}{2}}u(x)-v_{1}(x)$. Then, by \eqref{PDE}, \eqref{2c6} and \eqref{2c8}, we have $w_{1}\in C^{2}(\mathbb{R}^{n})$ and satisfies
\begin{equation}\label{2c9}\\\begin{cases}
-\Delta w_{1}(x)=0, \,\,\,\,\,  x\in \mathbb{R}^n,\\
w_{1}(x)\geq0, \,\,\,\,\,\,  x\in \mathbb{R}^n.
\end{cases}\end{equation}
From Liouville theorem for harmonic functions, we can deduce that
\begin{equation}\label{2c10}
   w_{1}(x)=(-\Delta)^{m-1+\frac{\alpha}{2}}u(x)-v_{1}(x)\equiv C_{1}\geq0.
\end{equation}
Therefore, we have
\begin{eqnarray}\label{2c11}
  (-\Delta)^{m-1+\frac{\alpha}{2}}u(x)&=&\int_{\mathbb{R}^{n}}\frac{R_{2,n}}{|x-y|^{n-2}}\left(\frac{1}{|\cdot|^{\sigma}}\ast|u|^{p}\right)(y)u^{q}(y)dy+C_{1} \\
  &=:&f_{2}(u)(x)\geq C_{1}\geq0. \nonumber
\end{eqnarray}

Next, for arbitrary $R>0$, let
\begin{equation}\label{2c12}
v_{2}^R(x):=\int_{B_R(0)}G^{2}_R(x,y)f_{2}(u)(y)dy.
\end{equation}
Then, we can get
\begin{equation}\label{2c13}\\\begin{cases}
-\Delta v_2^R(x)=f_{2}(u)(x),\ \ x\in B_R(0),\\
v_2^R(x)=0,\ \ \ \ \ \ \ x\in \mathbb{R}^{n}\setminus B_R(0).
\end{cases}\end{equation}
Let $w_2^R(x):=(-\Delta)^{m-2+\frac{\alpha}{2}}u(x)-v_2^R(x)$. By Theorem \ref{Thm0}, \eqref{2c11} and \eqref{2c13}, we have
\begin{equation}\label{2c14}\\\begin{cases}
-\Delta w_2^R(x)=0,\ \ \ \ x\in B_R(0),\\
w_2^R(x)\geq0, \,\,\,\,\, x\in \mathbb{R}^{n}\setminus B_R(0).
\end{cases}\end{equation}
By maximum principle, we deduce that for any $R>0$,
\begin{equation}\label{2c15}
  w_2^R(x)=(-\Delta)^{m-2+\frac{\alpha}{2}}u(x)-v_2^{R}(x)\geq0, \,\,\,\,\,\,\, \forall \,\, x\in\mathbb{R}^{n}.
\end{equation}
Now, for each fixed $x\in\mathbb{R}^{n}$, letting $R\rightarrow\infty$ in \eqref{2c15}, we have
\begin{equation}\label{2c16}
(-\Delta)^{m-2+\frac{\alpha}{2}}u(x)\geq\int_{\mathbb{R}^{n}}\frac{R_{2,n}}{|x-y|^{n-2}}f_{2}(u)(y)dy=:v_{2}(x)\geq0.
\end{equation}
Take $x=0$ in \eqref{2c16}, we get
\begin{equation}\label{2c17}
  \int_{\mathbb{R}^{n}}\frac{C_{1}}{|y|^{n-2}}dy\leq\int_{\mathbb{R}^{n}}\frac{f_{2}(u)(y)}{|y|^{n-2}}dy<+\infty,
\end{equation}
it follows easily that $C_{1}=0$, and hence we have proved \eqref{3-1}, that is,
\begin{equation}\label{2c18}
  u_{m}(x)=(-\Delta)^{m-1+\frac{\alpha}{2}}u(x)=f_{2}(u)(x)
  =\int_{\mathbb{R}^{n}}\frac{R_{2,n}}{|x-y|^{n-2}}\left(\frac{1}{|\cdot|^{\sigma}}\ast|u|^{p}\right)(y)u^{q}(y)dy.
\end{equation}

One can easily observe that $v_{2}$ is a solution of
\begin{equation}\label{2c19}
-\Delta v_{2}(x)=f_{2}(u)(x),  \,\,\,\,\, x\in \mathbb{R}^n.
\end{equation}
Define $w_{2}(x):=(-\Delta)^{m-2+\frac{\alpha}{2}}u(x)-v_{2}(x)$, then it satisfies
\begin{equation}\label{2c20}\\\begin{cases}
-\Delta w_{2}(x)=0, \,\,\,\,\,  x\in \mathbb{R}^n,\\
w_{2}(x)\geq0, \,\,\,\,\,\,  x\in\mathbb{R}^n.
\end{cases}\end{equation}
From Liouville theorem for harmonic functions, we can deduce that
\begin{equation}\label{2c21}
   w_{2}(x)=(-\Delta)^{m-2+\frac{\alpha}{2}}u(x)-v_{2}(x)\equiv C_{2}\geq0.
\end{equation}
Therefore, we have proved that
\begin{equation}\label{2c22}
  (-\Delta)^{m-2+\frac{\alpha}{2}}u(x)=\int_{\mathbb{R}^{n}}\frac{R_{2,n}}{|x-y|^{n-2}}f_{2}(u)(y)dy+C_{2}=:f_{3}(u)(x)\geq C_{2}\geq0.
\end{equation}
By the same methods as above, we can prove that
\begin{equation}\label{3-2}
(-\Delta)^{m-3+\frac{\alpha}{2}}u(x)\geq\int_{\mathbb{R}^{n}}\frac{R_{2,n}}{|x-y|^{n-2}}f_{3}(u)(y)dy=:v_{3}(x)\geq0.
\end{equation}
Take $x=0$ in \eqref{3-2}, we get
\begin{equation}\label{3-3}
  \int_{\mathbb{R}^{n}}\frac{C_{2}}{|y|^{n-2}}dy\leq\int_{\mathbb{R}^{n}}\frac{f_{3}(u)(y)}{|y|^{n-2}}dy<+\infty,
\end{equation}
it follows easily that $C_{2}=0$, and hence we have
\begin{equation}\label{2c23}
  u_{m-1}(x)=(-\Delta)^{m-2+\frac{\alpha}{2}}u(x)=f_{3}(u)(x)
  =\int_{\mathbb{R}^{n}}\frac{R_{2,n}}{|x-y|^{n-2}}f_{2}(u)(y)dy.
\end{equation}
Repeating the above arguments, defining
\begin{equation}\label{2c24}
  f_{k+1}(u)(x):=\int_{\mathbb{R}^{n}}\frac{R_{2,n}}{|x-y|^{n-2}}f_{k}(u)(y)dy
\end{equation}
for $k=1,2,\cdots,m-1$, then by Theorem \ref{Thm0} and induction, we have
\begin{equation}\label{2c25}
  u_{m+1-k}(x)=(-\Delta)^{m-k+\frac{\alpha}{2}}u(x)=f_{k+1}(u)(x)=\int_{\mathbb{R}^{n}}\frac{R_{2,n}}{|x-y|^{n-2}}f_{k}(u)(y)dy
\end{equation}
for $k=1,2,\cdots,m-1$, and
\begin{equation}\label{2c50}
  u_{1}(x)=(-\Delta)^{\frac{\alpha}{2}}u(x)=\int_{\mathbb{R}^{n}}\frac{R_{2,n}}{|x-y|^{n-2}}f_{m}(u)(y)dy+C_{m}=:f_{m+1}(u)(x)\geq C_{m}\geq0.
\end{equation}
For arbitrary $R>0$, let
\begin{equation}\label{2c12+}
v_{m+1}^R(x):=\int_{B_R(0)}G^{\alpha}_R(x,y)f_{m+1}(u)(y)dy,
\end{equation}
where, in the cases $0<\alpha<2$, the Green's function for $(-\Delta)^{\frac{\alpha}{2}}$ on $B_R(0)$ is given by
\begin{equation}\label{2-8c+}
G^\alpha_R(x,y):=\frac{C_{n,\alpha}}{|x-y|^{n-\alpha}}\int_{0}^{\frac{t_{R}}{s_{R}}}\frac{b^{\frac{\alpha}{2}-1}}{(1+b)^{\frac{n}{2}}}db
\,\,\,\,\,\,\,\,\, \text{if} \,\, x,y\in B_{R}(0)
\end{equation}
with $s_{R}=\frac{|x-y|^{2}}{R^{2}}$, $t_{R}=\left(1-\frac{|x|^{2}}{R^{2}}\right)\left(1-\frac{|y|^{2}}{R^{2}}\right)$, and $G^{\alpha}_{R}(x,y)=0$ if $x$ or $y\in\mathbb{R}^{n}\setminus B_{R}(0)$ (see \cite{K}). Then, we can get
\begin{equation}\label{2c13+}\\\begin{cases}
(-\Delta)^{\frac{\alpha}{2}}v_{m+1}^R(x)=f_{m+1}(u)(x),\ \ x\in B_R(0),\\
v_{m+1}^R(x)=0,\ \ \ \ \ \ \ x\in \mathbb{R}^{n}\setminus B_R(0).
\end{cases}\end{equation}
Let $w_{m+1}^R(x):=u(x)-v_{m+1}^R(x)$. By \eqref{2c50} and \eqref{2c13+}, we have
\begin{equation}\label{2c14+}\\\begin{cases}
(-\Delta)^{\frac{\alpha}{2}}w_{m+1}^R(x)=0,\ \ \ \ x\in B_R(0),\\
w_{m+1}^R(x)\geq0, \,\,\,\,\, x\in \mathbb{R}^{n}\setminus B_R(0).
\end{cases}\end{equation}

Now we need the following maximum principle for fractional Laplacians $(-\Delta)^{\frac{\alpha}{2}}$, which can been found in \cite{CLL,S}.
\begin{lem}\label{max}
Let $\Omega$ be a bounded domain in $\mathbb{R}^{n}$ and $0<\alpha<2$. Assume that $u\in\mathcal{L}_{\alpha}\cap C^{1,1}_{loc}(\Omega)$ and is l.s.c. on $\overline{\Omega}$. If $(-\Delta)^{\frac{\alpha}{2}}u\geq 0$ in $\Omega$ and $u\geq 0$ in $\mathbb{R}^n\setminus\Omega$, then $u\geq 0$ in $\mathbb{R}^n$. Moreover, if $u=0$ at some point in $\Omega$, then $u=0$ a.e. in $\mathbb{R}^{n}$. These conclusions also hold for unbounded domain $\Omega$ if we assume further that
\[\liminf_{|x|\rightarrow\infty}u(x)\geq0.\]
\end{lem}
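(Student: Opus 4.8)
The plan is to prove Lemma~\ref{max} by a contradiction argument exploiting the sign of the singular integral in definition \eqref{nonlocal defn} at an interior minimum point; this is the standard route for such maximum principles, and in any case the statement is available from \cite{CLL,S}. First I would suppose, for contradiction, that $u(x)<0$ for some $x\in\mathbb{R}^n$. Since $u\geq0$ on $\mathbb{R}^n\setminus\Omega$ and $\Omega$ is open, we have $\partial\Omega\subset\mathbb{R}^n\setminus\Omega$, so $u\geq0$ on $\partial\Omega$; hence $\inf_{\overline\Omega}u<0$. Because $\overline\Omega$ is compact and $u$ is l.s.c. on $\overline\Omega$, this infimum is attained at some $x_0$, and as $u(x_0)<0\leq u|_{\partial\Omega}$ we must have $x_0\in\Omega$. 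Moreover $u(x_0)=\min_{\mathbb{R}^n}u$, since $u\geq0>u(x_0)$ off $\Omega$.

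The second step is to evaluate $(-\Delta)^{\alpha/2}u(x_0)$ from \eqref{nonlocal defn}. Near $x_0$ I would use $u\in C^{1,1}_{loc}(\Omega)$ together with $\nabla u(x_0)=0$ (interior minimum) to get $|u(x_0)-u(y)|\leq C|x_0-y|^2$, which is integrable against $|x_0-y|^{-n-\alpha}$ because $\alpha<2$; away from $x_0$, the assumption $u\in\mathcal{L}_\alpha$ together with finiteness of $|u(x_0)|$ gives absolute convergence of $\int_{\mathbb{R}^n}|x_0-y|^{-n-\alpha}|u(x_0)-u(y)|\,dy$. Thus the principal value is in fact an absolutely convergent integral, and since $u(x_0)-u(y)\leq0$ pointwise and $C_{\alpha,n}>0$, we get $(-\Delta)^{\alpha/2}u(x_0)\leq0$. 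Combined with the hypothesis $(-\Delta)^{\alpha/2}u(x_0)\geq0$, the integral must vanish, and a nonpositive integrand with zero integral forces $u(y)=u(x_0)<0$ for a.e. $y\in\mathbb{R}^n$ — contradicting $u\geq0$ on $\mathbb{R}^n\setminus\Omega$, a set of positive measure. Hence $u\geq0$ in $\mathbb{R}^n$.

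For the remaining assertions: if $u(x_1)=0$ at some $x_1\in\Omega$, then by the part just proved $x_1$ is a global minimum, and the same computation gives $0\leq(-\Delta)^{\alpha/2}u(x_1)=-C_{\alpha,n}\int_{\mathbb{R}^n}|x_1-y|^{-n-\alpha}u(y)\,dy\leq0$, whence $u\equiv0$ a.e. in $\mathbb{R}^n$. When $\Omega$ is unbounded, under $\liminf_{|x|\to\infty}u(x)\geq0$ I would adapt the first step: if $\inf_{\mathbb{R}^n}u=-2m<0$, choose $R$ with $u\geq-m$ on $\{|x|\geq R\}$; then the infimum is attained on the compact set $\overline{\Omega\cap B_R(0)}$, and since $u\geq0$ on $\partial\Omega$ while $u\geq-m>-2m$ on $\partial B_R(0)$, it is attained at a genuine interior point of $\Omega$, after which the singular-integral argument runs verbatim. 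The step I expect to require the most care is precisely this bookkeeping — guaranteeing the negative infimum is attained at an \emph{interior} point of $\Omega$ (so the favorable pointwise sign of the integrand is available) together with the absolute convergence of the defining integral there; once these are secured, the sign analysis and the a.e.-vanishing conclusion are immediate.
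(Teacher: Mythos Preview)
The paper does not give its own proof of this lemma; it simply cites the result from \cite{CLL,S}. Your argument is correct and is precisely the standard proof found in those references: evaluate the defining integral at a global interior minimum, use the $C^{1,1}_{loc}$ regularity and $\mathcal{L}_\alpha$ decay to justify absolute convergence, and read off the sign to force a contradiction (or the a.e.\ vanishing in the equality case).
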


By maximal principle for $-\Delta$ if $\alpha=2$ and Lemma \ref{max} if $0<\alpha<2$, we can deduce immediately from \eqref{2c14+} that for any $R>0$,
\begin{equation}\label{2c15+}
  w_{m+1}^R(x)=u(x)-v_{m+1}^{R}(x)\geq0, \,\,\,\,\,\,\, \forall \,\, x\in\mathbb{R}^{n}.
\end{equation}
Now, for each fixed $x\in\mathbb{R}^{n}$, letting $R\rightarrow\infty$ in \eqref{2c15+}, we have
\begin{equation}\label{2c16+}
u(x)\geq\int_{\mathbb{R}^{n}}\frac{R_{\alpha,n}}{|x-y|^{n-\alpha}}f_{m+1}(u)(y)dy=:v_{m+1}(x)\geq0.
\end{equation}
Take $x=0$ in \eqref{2c16+}, we get
\begin{equation}\label{2c17+}
  \int_{\mathbb{R}^{n}}\frac{C_{m}}{|y|^{n-\alpha}}dy\leq\int_{\mathbb{R}^{n}}\frac{f_{m+1}(u)(y)}{|y|^{n-\alpha}}dy<+\infty,
\end{equation}
it follows easily that $C_{m}=0$, and hence we have
\begin{equation}\label{2c18+}
  u_{1}(x)=(-\Delta)^{\frac{\alpha}{2}}u(x)=f_{m+1}(u)(x)=\int_{\mathbb{R}^{n}}\frac{R_{2,n}}{|x-y|^{n-2}}f_{m}(u)(y)dy.
\end{equation}

One can easily observe that $v_{m+1}$ is a solution of
\begin{equation}\label{3-4}
(-\Delta)^{\frac{\alpha}{2}}v_{m+1}(x)=f_{m+1}(x),  \,\,\,\,\,\,\, x\in \mathbb{R}^n.
\end{equation}
Define $w_{m+1}(x):=u(x)-v_{m+1}(x)$. Then, by \eqref{2c16+} and \eqref{2c18+}, we have $w_{m+1}$ satisfies
\begin{equation}\label{3-5}\\\begin{cases}
(-\Delta)^{\frac{\alpha}{2}}w_{m+1}(x)=0, \,\,\,\,\,  x\in \mathbb{R}^n,\\
w_{m+1}(x)\geq0, \,\,\,\,\,\,  x\in \mathbb{R}^n.
\end{cases}\end{equation}

Now we need the following Liouville theorem for fractional Laplacians $(-\Delta)^{\frac{\alpha}{2}}$, which can been found in \cite{BKN,ZCCY}.
\begin{lem}\label{Liouville}(Liouville theorem, \cite{BKN,ZCCY})
Assume $n\geq2$ and $0<\alpha<2$. Let $u$ be a strong solution of
\begin{equation*}\\\begin{cases}
	(-\Delta)^{\frac{\alpha}{2}}u(x)=0, \,\,\,\,\,\,\,\, x\in\mathbb{R}^{n}, \\
	u(x)\geq0, \,\,\,\,\,\,\, x\in\mathbb{R}^{n},
\end{cases}\end{equation*}
then $u\equiv C\geq0$.
\end{lem}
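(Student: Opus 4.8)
\textbf{Proof proposal for Lemma~\ref{Liouville}.} The plan is to represent $u$ inside every ball as the Poisson integral of its own exterior values and then send the radius to infinity, exploiting the sign condition $u\geq0$. Fix $R>0$. Since $u$ is a strong solution of $(-\Delta)^{\alpha/2}u=0$ in $B_R(0)$ (so, by interior regularity for $(-\Delta)^{\alpha/2}$, smooth there) and $u\in\mathcal{L}_\alpha$, I claim
\begin{equation*}
u(x)=\int_{|y|>R}P^\alpha_R(x,y)\,u(y)\,dy,\qquad x\in B_R(0),
\end{equation*}
with $P^\alpha_R$ the kernel in \eqref{2-22}. To prove this, let $\widetilde u(x)$ denote the right-hand side. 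The integral is finite because $P^\alpha_R(x,y)$ is integrable near $|y|=R$ (the singularity is of order $(|y|-R)^{-\alpha/2}$ with $\alpha/2<1$) and $P^\alpha_R(x,y)\le C_{x,R}\,|y|^{-n-\alpha}$ for $|y|$ large, which pairs with $u\in\mathcal{L}_\alpha$. Then $\widetilde u$ is $\alpha$-harmonic in $B_R(0)$, agrees with $u$ on $\mathbb{R}^n\setminus B_R(0)$, and has the appropriate lower semicontinuity, so applying Lemma~\ref{max} to $\pm(u-\widetilde u)$ on $B_R(0)$ forces $u\equiv\widetilde u$ in $B_R(0)$.

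Next I would fix two arbitrary points $x_1,x_2\in\mathbb{R}^n$ and compare the two representations. From the explicit form \eqref{2-22}, the $(|y|^2-R^2)^{\alpha/2}$ factors cancel and
\begin{equation*}
\frac{P^\alpha_R(x_1,y)}{P^\alpha_R(x_2,y)}=\left(\frac{R^2-|x_1|^2}{R^2-|x_2|^2}\right)^{\frac{\alpha}{2}}\left(\frac{|x_2-y|}{|x_1-y|}\right)^{n}.
\end{equation*}
The first factor tends to $1$ as $R\to\infty$, uniformly in $y$; and for $|y|>R$ with $R$ large one has $|x_i-y|\ge R-|x_i|$ and $\big||x_1-y|-|x_2-y|\big|\le|x_1-x_2|$, so the second factor lies between $(1-2|x_1-x_2|/R)^n$ and $(1+2|x_1-x_2|/R)^n$, hence also tends to $1$ uniformly on $\{|y|>R\}$. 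Thus there is a sequence $\eps_R\to0$ with
\begin{equation*}
(1-\eps_R)\,P^\alpha_R(x_2,y)\le P^\alpha_R(x_1,y)\le (1+\eps_R)\,P^\alpha_R(x_2,y)\qquad\text{for all }|y|>R.
\end{equation*}

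Since $u\ge0$, multiplying through by $u(y)$ and integrating over $\{|y|>R\}$ gives, via the representation formula, $(1-\eps_R)\,u(x_2)\le u(x_1)\le (1+\eps_R)\,u(x_2)$; letting $R\to\infty$ yields $u(x_1)=u(x_2)$. As $x_1,x_2$ were arbitrary, $u\equiv C$ for some constant $C$, and $C\ge0$ because $u\ge0$. The one genuinely delicate step is the first one, namely the justification of the Poisson representation --- equivalently, the uniqueness for the exterior Dirichlet problem of $(-\Delta)^{\alpha/2}$ on $B_R(0)$ --- which is precisely where the maximum principle in Lemma~\ref{max} and the decay of $P^\alpha_R$ against the $\mathcal{L}_\alpha$-integrability of $u$ enter; once that is in place, the remaining argument is an elementary two-sided estimate of the Poisson kernel.
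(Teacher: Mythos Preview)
The paper does not supply its own proof of Lemma~\ref{Liouville}; it is quoted from \cite{BKN,ZCCY} as a known result and used as a black box in Section~3. So there is no in-paper argument to compare against.

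Your proof is correct and is in fact one of the standard routes to this Liouville theorem: the Poisson representation in balls (which the paper itself records as \eqref{2-20} with the Green term absent), followed by the elementary observation that the ratio $P^\alpha_R(x_1,y)/P^\alpha_R(x_2,y)\to 1$ uniformly on $\{|y|>R\}$ as $R\to\infty$, which together with $u\ge 0$ forces $u(x_1)=u(x_2)$. This is the fractional analogue of the classical Harnack-type proof that nonnegative harmonic functions on $\mathbb{R}^n$ are constant. Your justification of the representation via Lemma~\ref{max} applied to $\pm(u-\widetilde u)$ is the right mechanism; the hypotheses of that lemma (membership in $\mathcal{L}_\alpha$, local $C^{1,1}$ regularity, lower semicontinuity up to $\overline{B_R(0)}$) are indeed met because $u$ is a strong solution and the Poisson integral of an $\mathcal{L}_\alpha$ function is smooth in the interior. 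The estimate on the kernel ratio is accurate as written.
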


From Liouville theorem for harmonic functions if $\alpha=2$ and Lemma \ref{Liouville} if $0<\alpha<2$, we can deduce that
\begin{equation}\label{3-6}
   w_{m+1}(x)=u(x)-v_{m+1}(x)\equiv C_{m+1}\geq0.
\end{equation}
Therefore, we have
\begin{equation}\label{3-7}
  u(x)=\int_{\mathbb{R}^{n}}\frac{R_{\alpha,n}}{|x-y|^{n-\alpha}}f_{m+1}(u)(y)dy+C_{m+1}\geq C_{m+1}\geq0.
\end{equation}
Consequently, we get
\begin{equation}\label{3-8}
  \int_{\mathbb{R}^{n}}\frac{(C_{m+1})^{p}}{|x|^{\sigma}}dx\leq\int_{\mathbb{R}^{n}}\frac{u^{p}(x)}{|x|^{\sigma}}dx<+\infty,
\end{equation}
which implies $C_{m+1}=0$, and hence
\begin{eqnarray}\label{3-9}
  u(x)&=&\int_{\mathbb{R}^{n}}\frac{R_{\alpha,n}}{|x-y|^{n-\alpha}}f_{m+1}(u)(y)dy=\int_{\mathbb{R}^{n}}\frac{R_{\alpha,n}}{|x-y^{m+1}|^{n-\alpha}}
  \int_{\mathbb{R}^{n}}\frac{R_{2,n}}{|y^{m+1}-y^{m}|^{n-2}} \\
  &&\cdots\int_{\mathbb{R}^{n}}\frac{R_{2,n}}{|y^{2}-y^{1}|^{n-2}}\left(\frac{1}{|\cdot|^{\sigma}}\ast u^{p}\right)(y^{1})u^{q}(y^{1})dy^{1}\cdots dy^{m}dy^{m+1}. \nonumber
\end{eqnarray}

From the properties of Riesz potential, we have the following formula (see \cite{Stein}), that is, for any $\alpha_{1},\alpha_{2}\in(0,n)$ such that $\alpha_{1}+\alpha_{2}\in(0,n)$, one has
\begin{equation}\label{3-27}
\int_{\mathbb{R}^{n}}\frac{R_{\alpha_{1},n}}{|x-y|^{n-\alpha_{1}}}\cdot\frac{R_{\alpha_{2},n}}{|y-z|^{n-\alpha_{2}}}dy
=\frac{R_{\alpha_{1}+\alpha_{2},n}}{|x-z|^{n-(\alpha_{1}+\alpha_{2})}}.
\end{equation}
Now, by applying the formula \eqref{3-27} and direct calculations to \eqref{3-9}, we obtain that
\begin{equation}\label{3-28}
u(x)=\int_{\mathbb{R}^n}\frac{R_{2s,n}}{|x-y|^{n-2s}}\left(\int_{\mathbb{R}^n}\frac{u^{p}(z)}{|y-z|^{\sigma}}dz\right)u^{q}(y)dy,
\end{equation}
that is, $u$ satisfies the integral equation \eqref{IE}. This concludes our proof of Theorem \ref{equivalence}.

\section{Proof of Theorem \ref{Thm}}
In this section, we will apply the method of moving of spheres to integral equations \eqref{IE} and establish the classification results of nonnegative solutions, that is, Theorem \ref{Thm}.

Assume $n\geq1$, $0<s:=m+\frac{\alpha}{2}<\frac{n}{2}$, $0<\alpha\leq2$, $m\geq0$ is an integer, $0<\sigma<n$, $0<p\leq\frac{2n-\sigma}{n-2s}$ and $0<q\leq\frac{n+2s-\sigma}{n-2s}$. Let $u\in C(\mathbb{R}^{n})$ be a nonnegative solution to IE \eqref{IE}. It follows from integral equation \eqref{IE} and ``bootstrap" methods that $u\in C^{\infty}(\mathbb{R}^{n})$. In the following, we assume $u$ is a non-trivial nonnegative solutions (i.e., $u\not\equiv 0$) to IE \eqref{IE}. Then, integral equation \eqref{IE} implies that $u>0$ in $\mathbb{R}^{n}$ and $\int_{\mathbb{R}^{n}}\frac{u^{p}(x)}{|x|^{\sigma}}dx<+\infty$. We will derive the unique form (up to translations and scalings) of the positive solution $u$ in the critical case $p=\frac{2n-\sigma}{n-2s}$ and $q=\frac{n+2s-\sigma}{n-2s}$, and obtain a contradiction and hence the nonexistence of positive solutions in the subcritical cases $0<p<\frac{2n-\sigma}{n-2s}$ or $0<q<\frac{n+2s-\sigma}{n-2s}$.

Set $P(y)=\int_{\mathbb{R}^n}\frac{u^{p}(z)}{|y-z|^{\sigma}}dz$, then \eqref{IE} can be written as
\begin{equation}\label{4-1}
  u(x)=\int_{\mathbb{R}^n}\frac{R_{2s,n}}{|x-y|^{n-2s}}P(y)u^{q}(y)dy.
\end{equation}
For arbitrary $x_0\in \mathbb{R}^n$ and $\lambda>0$, we define the Kelvin transforms centered at $x_{0}$ by
\begin{equation}\label{4-2}
u_{x_0,\lambda}(x):={\left(\frac{\lambda}{|x-x_0|}\right)}^{n-2s}u(x^\lambda), \qquad \forall \,\, x\in \mathbb{R}^{n}\setminus\{x_0\},
\end{equation}
where $x^\lambda=\frac{\lambda^2(x-x_0)}{|x-x_0|^2}+x_0$. Then, by \eqref{4-1} and \eqref{4-2}, we derive
\begin{align}\label{4-3}
u_{x_0,\lambda}(x)&={\left(\frac{\lambda}{|x-x_0|}\right)}^{n-2s}u(x^\lambda)\\
&=\int_{\mathbb{R}^n} \frac{R_{2s,n}P_{x_0,\lambda}(y)u_{x_0,\lambda}^{q}(y)}{|x-y|^{n-2s}}{\left(\frac{\lambda}{|y-x_0|}\right)}^\tau dy, \nonumber
\end{align}
where $\tau:=(n+2s-\sigma)-q(n-2s)\geq0$ and
\begin{equation}\label{4-3'}
  P_{x_0,\lambda}(y):=\left(\frac{\lambda}{|y-x_{0}|}\right)^{\sigma}P(y^{\lambda})=\int_{\mathbb{R}^n}\frac{u_{x_0,\lambda}^{p}(z)}{|y-z|^{\sigma}}
  \left(\frac{\lambda}{|z-x_{0}|}\right)^{\mu}dz
\end{equation}
with $\mu:=(2n-\sigma)-p(n-2s)\geq0$.

By \eqref{4-1}, \eqref{4-3} and straightforward calculations, one can verify
\begin{align}\label{4-4}
&u(x)=\int_{\mathbb{R}^n} \frac{R_{2s,n}}{|x-y|^{n-2s}}P(y)u^{q}(y)dy=\int_{B_\lambda(x_0)} \frac{R_{2s,n}P(y)u^{q}(y)}{|x-y|^{n-2s}}dy \\
&\qquad\quad +\int_{B_\lambda(x_0)} \frac{R_{2s,n}P(y^\lambda)}{{\left|\frac{|y-x_0|(x-x_0)}{\lambda}-\frac{\lambda (y-x_0)}{|y-x_0|}\right|}^{n-2s}}{\left( \frac{\lambda}{|y-x_0|}\right)}^{n+2s} u^{q}(y^\lambda)dy \nonumber \\
&\qquad=\int_{B_\lambda(x_0)} \frac{R_{2s,n}P(y)u^{q}(y)}{|x-y|^{n-2s}}dy \nonumber \\
&\qquad\quad +\int_{B_\lambda(x_0)} \frac{R_{2s,n}P_{x_0,\lambda}(y)}{{\left|\frac{|y-x_0|(x-x_0)}{\lambda}-\frac{\lambda (y-x_0)}{|y-x_0|}\right|}^{n-2s}}{\left( \frac{\lambda}{|y-x_0|}\right)}^{\tau}u_{x_0,\lambda}^{q}(y)dy, \nonumber
\end{align}
and
\begin{align}\label{4-5}
& u_{x_0,\lambda}(x)=\int_{\mathbb{R}^n} \frac{R_{2s,n}P_{x_0,\lambda}(y)u_{x_0,\lambda}^{q}(y)}{|x-y|^{n-2s}}{\left( \frac{\lambda}{|y-x_0|}\right)}^\tau dy \\
&\qquad\quad\,\, =\int_{B_\lambda(x_0)} \frac{R_{2s,n}P_{x_0,\lambda}(y)}{|x-y|^{n-2s}}{\left( \frac{\lambda}{|y-x_0|}\right)}^\tau u_{x_0,\lambda}^{q}(y)dy \nonumber \\
&\qquad\qquad\,\, +\int_{B_\lambda(x_0)} \frac{R_{2s,n}P(y)}{{\left|\frac{|y-x_0|(x-x_0)}{\lambda}-\frac{\lambda (y-x_0)}{|y-x_0|}\right|}^{n-2s}}u^{q}(y)dy. \nonumber
\end{align}

Define $\omega_\lambda(x)=u_{x_0,\lambda}(x)-u(x)$ for $x\in \mathbb{R}^n \setminus\{x_0\}$. We first show that for $\lambda>0$ sufficiently small,
\begin{equation}\label{4-17}
  \omega_\lambda(x)\geq 0, \qquad \forall \,\, x\in B_\lambda(x_0)\setminus\{x_0\}.
\end{equation}
It suffices to show that $B_\lambda^{-}(x_0)=\emptyset$ for $\lambda$ small enough, where the set $B_\lambda^{-}(x_0)$ is defined by
\begin{equation}\label{4-18}
  B_\lambda^{-}(x_0):=\{x\in B_\lambda(x_0)\setminus\{x_0\}|\, \omega_\lambda(x)<0\}.
\end{equation}
Let us define
\begin{equation}\label{4-19}
  K_1^\lambda(x,y)=R_{2s,n}\left(\frac{1}{|x-y|^{n-2s}}-\frac{1}{{\Big|\frac{|y-x_0|(x-x_0)}{\lambda}-\frac{\lambda (y-x_0)}{|y-x_0|}\Big|}^{n-2s}}\right),
\end{equation}
\begin{equation}\label{4-20}
  K_2^\lambda(x,y)=\frac{1}{|x-y|^{\sigma}}-\frac{1}{{\Big|\frac{|y-x_0|(x-x_0)}{\lambda}-\frac{\lambda (y-x_0)}{|y-x_0|}\Big|}^{\sigma}}.
\end{equation}
One can easily verify that $K_1^\lambda(x,y)>0$ and $K_2^\lambda(x,y)>0$ for any $x, y\in B_\lambda(x_0)$.

Then, by \eqref{4-4} and \eqref{4-5}, for any $x\in B_\lambda(x_0)\setminus\{x_{0}\}$, we derive
\begin{align}\label{4-21}
&\quad \omega_\lambda(x)=u_{x_0,\lambda}(x)-u(x)\\
&=\int_{B_\lambda(x_0)}K_1^\lambda(x,y)\left[P_{x_0,\lambda}(y){\left( \frac{\lambda}{|y-x_0|}\right)}^\tau u_{x_0,\lambda}^{q}(y)-P(y)u^{q}(y)\right]dy \nonumber\\
&\geq\int_{Q_\lambda(x_0)}K_1^\lambda(x,y)\left[P_{x_0,\lambda}(y) u_{x_0,\lambda}^{q}(y)-P(y)u^{q}(y)\right]dy \nonumber\\
&=\int_{Q_\lambda(x_0)}K_1^\lambda(x,y)P(y)\left[u_{x_0,\lambda}^{q}(y)-u^{q}(y)\right]dy \nonumber\\
&\quad +\int_{Q_\lambda(x_0)}K_1^\lambda(x,y)\left[P_{x_0,\lambda}(y)-P(y)\right]u_{x_0,\lambda}^{q}(y)dy,\nonumber
\end{align}
where the subset $Q_\lambda(x_0)\subseteq B_\lambda(x_0)\setminus\{x_{0}\}$ is defined by
\begin{equation}\label{4-28}
  Q_\lambda(x_0):=\{x\in B_\lambda(x_0)\setminus\{x_{0}\}|\,P_{x_{0},\lambda}(x)u_{x_{0},\lambda}^{q}(x)<P(x)u^{q}(x)\}.
\end{equation}
Note that through direct calculations, one has, for any $y\in B_\lambda(x_0)$,
\begin{eqnarray}\label{4-22}
  P_{x_0,\lambda}(y)-P(y)&=&\int_{B_\lambda(x_0)}K_2^\lambda(y,z)\left[\left(\frac{\lambda}{|z-x_{0}|}\right)^{\mu}u_{x_0,\lambda}^{p}(z)-u^{p}(z)\right]dz \\
  &\geq& \int_{B_\lambda(x_0)}K_2^\lambda(y,z)\left[u_{x_0,\lambda}^{p}(z)-u^{p}(z)\right]dz. \nonumber
\end{eqnarray}
Thus from \eqref{4-21}, \eqref{4-22} and mean value theorem, we get, for any $x\in B_\lambda^{-}(x_0)$,
\begin{align}\label{4-6}
&0>\omega_\lambda(x)\geq\int_{Q_\lambda(x_0)} K_1^\lambda(x,y)P(y)\left[u_{x_0,\lambda}^{q}(y)-u^{q}(y)\right]dy \\
&\quad\,\,+\int_{Q_\lambda(x_0)}K_1^\lambda(x,y)\left(\int_{B_\lambda(x_0)}K_2^\lambda(y,z)\left[u_{x_0,\lambda}^{p}(z)-u^{p}(z)\right]dz\right)
u_{x_0,\lambda}^{q}(y)dy \nonumber\\
&\,\,\,\geq q\int_{Q_\lambda(x_0)\cap B_\lambda^-(x_0)}K_1^\lambda(x,y)P(y)\max\{u^{q-1}(y),u_{x_0,\lambda}^{q-1}(y)\}\omega_\lambda(y)dy \nonumber\\
&\quad\,\,+p\int_{Q_\lambda(x_0)}K_1^\lambda(x,y)\left(\int_{B_\lambda^-(x_0)}K_2^\lambda(y,z)\max\{u^{p-1}(z),u_{x_0,\lambda}^{p-1}(z)\}\omega_\lambda(z)dz\right)u_{x_0,\lambda}^{q}(y)dy \nonumber\\
&\,\,\,\geq q\int_{Q_\lambda(x_0)\cap B_\lambda^-(x_0)}\frac{R_{2s,n}P(y)\omega_\lambda(y)}{|x-y|^{n-2s}}\max\{u^{q-1}(y),u_{x_0,\lambda}^{q-1}(y)\}dy \nonumber\\
&\quad\,\,+p\int_{Q_\lambda(x_0)}\frac{R_{2s,n}}{|x-y|^{n-2s}}\left(\int_{B_\lambda^-(x_0)}\frac{\omega_\lambda(z)}{|y-z|^{\sigma}}\max\{u^{p-1}(z),u_{x_0,\lambda}^{p-1}(z)\}dz\right)
u_{x_0,\lambda}^{q}(y)dy.\nonumber
\end{align}

Now we need the following Hardy-Littlewood-Sobolev inequality.
\begin{lem}\label{HL}(Hardy-Littlewood-Sobolev inequality)
Let $n\geq1$, $0<s<n$ and $1<p<q<\infty$ be such that $\frac{n}{q}=\frac{n}{p}-s$. Then we have
\begin{equation}\label{HLS}
	\Big\|\int_{\mathbb{R}^{n}}\frac{f(y)}{|x-y|^{n-s}}dy\Big\|_{L^{q}(\mathbb{R}^{n})}\leq C_{n,s,p,q}\|f\|_{L^{p}(\mathbb{R}^{n})},
\end{equation}
for all $f\in L^{p}(\mathbb{R}^{n})$.
\end{lem}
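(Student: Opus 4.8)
This is the classical Hardy--Littlewood--Sobolev inequality, so the cleanest route is simply to invoke \cite{Lieb,Stein}; for completeness here is the standard argument I would record. Writing $I(f)(x):=\int_{\mathbb{R}^{n}}\frac{f(y)}{|x-y|^{n-s}}\,dy$ and assuming $f\geq0$ (replace $f$ by $|f|$ otherwise, using the triangle inequality), the plan is to first establish the pointwise bound
\begin{equation*}
  I(f)(x)\leq C\,\big(Mf(x)\big)^{p/q}\,\|f\|_{L^{p}(\mathbb{R}^{n})}^{1-p/q},
\end{equation*}
where $Mf$ denotes the Hardy--Littlewood maximal function of $f$, and then to integrate this estimate.

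To obtain the pointwise bound I would split the Riesz kernel at an arbitrary radius $R>0$. On the near region $\{|x-y|<R\}$, decomposing into dyadic annuli $\{2^{-k-1}R\leq|x-y|<2^{-k}R\}$ and bounding the contribution of each by an average of $f$ over a ball of radius $2^{-k}R$ gives $\int_{|x-y|<R}\frac{f(y)}{|x-y|^{n-s}}\,dy\leq C\,R^{s}\,Mf(x)$ (the hypothesis $s>0$ is what makes the resulting geometric series summable). On the far region $\{|x-y|\geq R\}$, Hölder's inequality with exponents $p$ and $p'$ gives the bound $\|f\|_{L^{p}}\big(\int_{|z|\geq R}|z|^{-(n-s)p'}\,dz\big)^{1/p'}$; the balance condition $\frac1q=\frac1p-\frac sn$ forces $s<\frac np$, hence $(n-s)p'>n$, the integral converges, and this term equals a constant times $R^{\,s-n/p}\|f\|_{L^{p}}$. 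Adding the two contributions and optimising over $R$ (the optimal choice being $R\sim(\|f\|_{L^{p}}/Mf(x))^{p/n}$) yields the displayed inequality, where one uses the identity $1-\frac{sp}{n}=\frac pq$.

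Finally, raising the pointwise estimate to the $q$-th power and integrating,
\begin{equation*}
  \big\|I(f)\big\|_{L^{q}(\mathbb{R}^{n})}^{q}\leq C\,\|f\|_{L^{p}(\mathbb{R}^{n})}^{q-p}\int_{\mathbb{R}^{n}}\big(Mf(x)\big)^{p}\,dx\leq C\,\|f\|_{L^{p}(\mathbb{R}^{n})}^{q},
\end{equation*}
where the last step is the Hardy--Littlewood maximal theorem. The only delicate point --- and the reason one restricts to $1<p<q<\infty$ --- is precisely this last step: the maximal operator is bounded on $L^{p}$ only for $p>1$, and indeed the inequality genuinely fails at the endpoint $p=1$. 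I do not attempt to extract the sharp constant, which would instead require the rearrangement argument of \cite{Lieb}; only the qualitative estimate is needed in what follows.
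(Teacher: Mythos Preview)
Your proof is correct; it is the standard Hedberg argument via the maximal function. Note, however, that the paper does not prove this lemma at all: it is stated as the classical Hardy--Littlewood--Sobolev inequality and simply used, with the implicit reference to \cite{Lieb,Stein}. So your opening remark that ``the cleanest route is simply to invoke \cite{Lieb,Stein}'' is in fact exactly what the paper does.
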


Set $U_{\lambda}(x):=\max\{u^{q-1}(x),u_{x_0,\lambda}^{q-1}(x)\}$, $V_{\lambda}(x):=\max\{u^{p-1}(x),u_{x_0,\lambda}^{p-1}(x)\}$, and
\begin{equation}\label{4-23}
  W_\lambda(y):=\int_{B_\lambda^-(x_0)} \frac{V_{\lambda}(z)\omega_\lambda(z)}{|y-z|^{\sigma}}dz.
\end{equation}
By \eqref{4-6}, Hardy-Littlewood-Sobolev inequality and H\"{o}lder inequality, we have, for any $r>max\{\frac{n}{n-2s},\frac{n}{\sigma}\}$,
\begin{align}\label{4-7}
&\quad \|\omega_\lambda\|_{L^{r}(B_\lambda^-(x_0))} \\
&\leq C{\left\|PU_{\lambda}\omega_\lambda\right\|}_{L^{\frac{nr}{n+2sr}}(Q_{\lambda}(x_{0})\cap B_\lambda^-(x_0))}+C{\left\|W_\lambda u_{x_0,\lambda}^{q}\right\|}_{L^{\frac{nr}{n+2sr}}(Q_\lambda(x_0))} \nonumber\\
&\leq C{\left\|PU_{\lambda}\right\|}_{L^{\frac{n}{2s}}(B_\lambda^-(x_0))}{\left\|\omega_\lambda\right\|}_{L^r(B_\lambda^-(x_0))}+ C{\left\|u_{x_0,\lambda}^{q}\right\|}_{L^{\frac{n}{2s}}(Q_\lambda(x_0))} {\left\|W_\lambda\right\|}_{L^{r}(Q_\lambda(x_0))} \nonumber\\
&\leq C{\left\|PU_{\lambda}\right\|}_{L^{\frac{n}{2s}}(B_\lambda^-(x_0))}{\left\|\omega_\lambda\right\|}_{L^r(B_\lambda^-(x_0))}+ C{\left\| u_{x_0,\lambda}^{q}\right\|}_{L^{\frac{n}{2s}}(Q_\lambda(x_0))}{\left\|V_{\lambda}\omega_\lambda\right\|}_{L^{\frac{nr}{n+(n-\sigma)r}}(B_\lambda^-(x_0))} \nonumber\\
&\leq C{\left\|PU_{\lambda}\right\|}_{L^{\frac{n}{2s}}(B_\lambda^-(x_0))}{\left\|\omega_\lambda\right\|}_{L^r(B_\lambda^-(x_0))} \nonumber \\
&\quad +C{\left\|u_{x_0,\lambda}^{q}\right\|}_{L^{\frac{n}{2s}}(Q_\lambda(x_0))}\left\|V_{\lambda}\right\|_{L^{\frac{n}{n-\sigma}}(B_\lambda^-(x_0))}
\|\omega_\lambda\|_{L^r(B_\lambda^-(x_0))} \nonumber\\
&\leq C\left({\left\|PU_{\lambda}\right\|}_{L^{\frac{n}{2s}}(B_\lambda^{-}(x_0))}+\left\|u_{x_0,\lambda}^{q}\right\|_{L^{\frac{n}{2s}}(Q_\lambda(x_0))} \left\|V_{\lambda}\right\|_{L^{\frac{n}{n-\sigma}}(B_\lambda^-(x_0))}\right)\left\|\omega_\lambda\right\|_{L^r(B_\lambda^-(x_0))}.\nonumber
\end{align}

Integral equation \eqref{IE} implies that, there exists a constant $C>0$, such that the positive solution $u$ satisfies the following lower bound:
\begin{equation}\label{4-8}
u(x)\geq\frac{C}{|x-x_0|^{n-2s}}, \,\,\,\,\,\,\,\,\,\,\,\,\, \forall \,\,\, |x-x_0|\geq1.
\end{equation}
Indeed, since $u>0$ solves the integral equation \eqref{IE}, we can infer that, for any $|x-x_0|\geq1$,
\begin{eqnarray}\label{4-24}
	u(x)&\geq&\int_{|y-x_0|\leq\frac{1}{2}} \frac{1}{|x-y|^{n-2s}} \int_{|z-x_0|>1} \frac{u^{p}(z)}{|y-z|^{\sigma}}dz u^{q}(y)dy\\
	\nonumber &\geq& \frac{C}{|x-x_0|^{n-2s}}\int_{|y-x_0|\leq\frac{1}{2}}\int_{|z-x_0|>1}\frac{u^{p}(z)}{|z-x_0|^{\sigma}}dz u^{q}(y) dy=:\frac{C}{|x-x_0|^{n-2s}}. \nonumber
\end{eqnarray}

As a consequence, we can obtain the following lower bounds for $u_{x_0,\lambda}$ in $B_\lambda(x_0)\setminus\{x_0\}$. Let us consider $0<\lambda<1$ first. If $0<|x-x_0|\leq\lambda^2$, then $|x^\lambda-x_0|\geq1$, by \eqref{4-8}, we have
\begin{equation}\label{4-25}
  u_{x_0,\lambda}(x)={\left(\frac{\lambda}{|x-x_0|}\right)}^{n-2s}u(x^\lambda)\geq\frac{C}{\lambda^{n-2s}}.
\end{equation}
If $\lambda^2\leq|x-x_0|<\lambda$, then $|x^\lambda-x_0|\leq1$, we have
\begin{equation}\label{4-26}
  u_{x_0,\lambda}(x)={\left(\frac{\lambda}{|x-x_0|}\right)}^{n-2s}u(x^\lambda)\geq\left(\min_{B_1(x_0)} u\right){\left(\frac{\lambda}{|x-x_0|}\right)}^{n-2s}\geq  \min_{B_1(x_0)}u=:C>0.
\end{equation}
For $\lambda\geq1$, one has $|x^\lambda-x_0|>\lambda\geq1$ for any $x\in B_\lambda(x_0)\setminus\{x_0\}$, and hence \eqref{4-8} yields
\begin{equation}\label{4-29}
  u_{x_0,\lambda}(x)={\left(\frac{\lambda}{|x-x_0|}\right)}^{n-2s}u(x^\lambda)\geq \frac{C}{\lambda^{n-2s}}, \qquad \forall \,\, x\in B_\lambda(x_0)\setminus\{x_0\}.
\end{equation}

From the definition of $P(y)$, one can get the following lower bound:
\begin{equation}\label{4-30}
  P(y)\geq \frac{C}{|y-x_{0}|^{\sigma}}\int_{|z-x_{0}|\leq\frac{1}{2}}u^{p}(z)dz=:\frac{C}{|y-x_{0}|^{\sigma}}, \qquad \forall \,\, |y-x_{0}|\geq1.
\end{equation}
Consequently, we can obtain the following lower bounds for $P_{x_0,\lambda}$ in $B_\lambda(x_0)\setminus\{x_0\}$. Let us consider $0<\lambda<1$ first. If $0<|y-x_0|\leq\lambda^2$, then $|y^\lambda-x_0|\geq1$, by \eqref{4-30}, we have
\begin{equation}\label{4-31}
  P_{x_0,\lambda}(y)={\left(\frac{\lambda}{|y-x_0|}\right)}^{\sigma}P(y^\lambda)\geq \frac{C}{\lambda^{\sigma}}.
\end{equation}
If $\lambda^2\leq|y-x_0|<\lambda$, then $|y^\lambda-x_0|\leq1$, we have
\begin{equation}\label{4-32}
  P_{x_0,\lambda}(y)={\left(\frac{\lambda}{|y-x_0|}\right)}^{\sigma}P(y^\lambda)\geq\left(\min_{B_1(x_0)} P\right){\left(\frac{\lambda}{|y-x_0|}\right)}^{\sigma}\geq\min_{B_1(x_0)}P=:C>0.
\end{equation}
For $\lambda\geq1$, one has $|y^\lambda-x_0|>\lambda\geq1$ for any $y\in B_\lambda(x_0)\setminus\{x_0\}$, and hence \eqref{4-30} yields
\begin{equation}\label{4-33}
  P_{x_0,\lambda}(y)={\left(\frac{\lambda}{|y-x_0|}\right)}^{\sigma}P(y^\lambda)\geq\frac{C}{\lambda^{\sigma}}, \qquad \forall \,\, y\in B_\lambda(x_0)\setminus\{x_0\}.
\end{equation}

From the lower bounds \eqref{4-25} and \eqref{4-26} of $u_{x_0,\lambda}$ in $B_\lambda(x_0)\setminus\{x_0\}$ and the continuity of $u$ and $P$ in $\mathbb{R}^{n}$, we can infer that
\begin{equation}\label{4-27}
  {\left\|PU_{\lambda}\right\|}_{L^{\frac{n}{2s}}(B_\lambda^-(x_0))} \rightarrow 0, \qquad \text{as}\,\,\lambda \rightarrow 0.
\end{equation}
By the definition of $Q_{\lambda}(x_{0})$, the continuity of $u$ and $P$ in $\mathbb{R}^{n}$, the lower bounds \eqref{4-31} and \eqref{4-32}, we can get the following upper bounds: for any $x\in Q_{\lambda}(x_{0})$,
\begin{equation}\label{4-34}
  u_{x_{0},\lambda}^{q}(x)<\frac{P(x)u^{q}(x)}{P_{x_{0},\lambda}(x)}\leq C(1+\lambda^{\sigma})\left(\max_{B_{1}(x_{0})}Pu^{q}\right)=:C(1+\lambda^{\sigma}), \qquad \text{if} \,\, 0<\lambda<1;
\end{equation}
\begin{equation}\label{4-35}
  u_{x_{0},\lambda}^{q}(x)<\frac{P(x)u^{q}(x)}{P_{x_{0},\lambda}(x)}\leq C(1+\lambda^{\sigma})\left(\max_{B_{\lambda}(x_{0})}Pu^{q}\right)=:C_{\lambda}(1+\lambda^{\sigma}), \qquad \text{if} \,\, \lambda\geq1.
\end{equation}
From the lower bounds \eqref{4-25} and \eqref{4-26} of $u_{x_0,\lambda}$ in $B_\lambda(x_0)\setminus\{x_0\}$, the upper bound \eqref{4-34} of $u_{x_{0},\lambda}^{q}$ in $Q_{\lambda}(x_{0})$ and the continuity of $u$ in $\mathbb{R}^{n}$, we deduce that
\begin{equation}\label{4-36}
  {\left\|u_{x_0,\lambda}^{q}\right\|}_{L^{\frac{n}{2s}}(Q_\lambda(x_0))}{\left\|V_{\lambda}\right\|}_{L^{\frac{n}{n-\sigma}}(B_\lambda^-(x_0))}\rightarrow 0, \qquad \text{as}\,\,\lambda \rightarrow 0.
\end{equation}
Thus, there exists $0<\epsilon_0<1$ small enough such that for all $0<\lambda<\epsilon_0$,
\begin{equation}\label{4-37}
  C\left({\left\|PU_{\lambda}\right\|}_{L^{\frac{n}{2s}}(B_\lambda^-(x_0))}+{\left\|u_{x_0,\lambda}^{q}\right\|}_{L^{\frac{n}{2s}}(Q_\lambda(x_0))}
  {\left\|V_{\lambda}\right\|}_{L^{\frac{n}{n-\sigma}}(B_\lambda^-(x_0))}\right)<\frac{1}{2},
\end{equation}
where $C$ is the constant in the last inequality of \eqref{4-7}. Immediately, we conclude from \eqref{4-7} that $\|\omega_{\lambda}\|_{L^{r}(B_\lambda^-(x_0))}=0$, and thus $B_\lambda^-(x_0)$ has measure $0$. Then, $B_\lambda^-(x_0)=\emptyset$ must hold true for any $0<\lambda<\epsilon_0$ due to the continuity of $\omega_\lambda$ in $B_\lambda(x_0)\setminus\{x_0\}$, and hence we have derived \eqref{4-17} for $0<\lambda<\epsilon_0$.

This provides a starting point to carry out the method of moving spheres for arbitrarily given center $x_0\in \mathbb{R}^n$. Next, we will continuously increase the radius $\lambda$ as long as $\omega_{\lambda}\geq 0$ in $B_\lambda(x_0)\setminus\{x_0\}$ holds true. For a given center $x_0$, the critical scale $\lambda_{x_0}$ is defined by
\begin{equation}\label{4-9}
\lambda_{x_0}:=\sup\{\lambda>0|\,\omega_\mu\geq 0 \,\, \text{in} \,\, B_\mu(x_0)\setminus\{x_0\}, \, \forall \,0<\mu\leq\lambda\}>0.
\end{equation}

For the critical scale $\lambda_{x_0}$, we have the following crucial Lemma.
\begin{lem} \label{lem1}
One of the following two assertions holds, that is, either\\
\emph{(A)} For every $x_0\in\mathbb{R}^n$, the corresponding critical scale $\lambda_{x_0}>0$ is finite, or\\
\emph{(B)} For every $x_0\in\mathbb{R}^n$, the corresponding critical scale $\lambda_{x_0}=\infty$, this is, for every $\lambda>0$,  $\omega_\lambda\geq 0$ in $B_\lambda(x_0)\setminus\{x_0\}$.
\end{lem}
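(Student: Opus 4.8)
The two alternatives (A) and (B) are logically exhaustive, so the plan is to prove the single implication: \emph{if $\lambda_{x_0}=\infty$ for some $x_0\in\mathbb{R}^n$, then $\lambda_{\bar x}=\infty$ for every $\bar x\in\mathbb{R}^n$.} This is precisely the statement that the set $\{x:\lambda_x=\infty\}\subseteq\mathbb{R}^n$ is either empty (case (A)) or all of $\mathbb{R}^n$ (case (B)).

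\textbf{Step 1 (slow decay from $\lambda_{x_0}=\infty$).} Assuming $\lambda_{x_0}=\infty$, so that $\omega_\lambda\geq0$ in $B_\lambda(x_0)\setminus\{x_0\}$ for \emph{every} $\lambda>0$, I would transplant this inequality through the inversion $z\mapsto z^\lambda$ (exactly as one passes from \eqref{4-4} to \eqref{4-5}) to obtain the equivalent exterior bound
\[
u(z)\ \geq\ u_{x_0,\lambda}(z)=\Bigl(\tfrac{\lambda}{|z-x_0|}\Bigr)^{n-2s}u(z^\lambda)\qquad\text{for all }\ |z-x_0|>\lambda .
\]
Fixing $\lambda$ and letting $|z-x_0|\to\infty$ (so $z^\lambda\to x_0$), the continuity and positivity of $u$ give $\liminf_{|z|\to\infty}|z|^{\,n-2s}u(z)\geq\lambda^{\,n-2s}u(x_0)$; letting $\lambda\to\infty$ then yields the slow-decay property
\[
\lim_{|z|\to\infty}|z|^{\,n-2s}u(z)=+\infty ,
\]
which I will call $(\star)$.

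\textbf{Step 2 (the procedure for an arbitrary center cannot stop).} Fix any $\bar x\in\mathbb{R}^n$; the argument behind \eqref{4-17}--\eqref{4-37} already gives $\lambda_{\bar x}>0$. Suppose, toward a contradiction, $\Lambda:=\lambda_{\bar x}<\infty$; then $\omega_\Lambda\geq0$ in $B_\Lambda(\bar x)\setminus\{\bar x\}$ by continuity. I would split into two cases. If $\omega_\Lambda$ vanishes at some interior point, the strong maximum principle in integral form -- extracted from the chain \eqref{4-21}--\eqref{4-6} and the strict positivity of $K_1^\lambda,K_2^\lambda$ -- forces $\omega_\Lambda\equiv0$ on $B_\Lambda(\bar x)\setminus\{\bar x\}$; transplanting once more through $z\mapsto z^\Lambda$ upgrades this to $u\equiv u_{\bar x,\Lambda}$ on all of $\mathbb{R}^n\setminus\{\bar x\}$, and letting $|z-\bar x|\to\infty$ in $u(z)=(\Lambda/|z-\bar x|)^{n-2s}u(z^\Lambda)$ shows $|z-\bar x|^{\,n-2s}u(z)\to\Lambda^{\,n-2s}u(\bar x)<\infty$, contradicting $(\star)$. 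If instead $\omega_\Lambda>0$ throughout $B_\Lambda(\bar x)\setminus\{\bar x\}$, I would show that $B_\lambda^-(\bar x)=\emptyset$ persists for $\lambda$ slightly larger than $\Lambda$, contradicting the maximality of $\Lambda$ in \eqref{4-9}. Either way $\lambda_{\bar x}=\infty$; since $\bar x$ is arbitrary, case (B) holds.

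\textbf{Main obstacle.} The hard part is the continuation in the second case of Step 2: for $\lambda\in(\Lambda,\Lambda+\delta)$ one must confine the bad set $B_\lambda^-(\bar x)$ to a thin spherical shell. Near $\bar x$ the blow-up lower bounds \eqref{4-25}--\eqref{4-26} (at scale $\lambda$) force $\omega_\lambda>0$, while on compact subsets of $B_\Lambda(\bar x)\setminus\{\bar x\}$ the strict inequality $\omega_\Lambda>0$ persists by continuity; hence $B_\lambda^-(\bar x)$ lies, up to a null set, inside $\{\, \big|\,|x-\bar x|-\Lambda\,\big|<\delta\,\}$. On such a shell the norms $\|PU_\lambda\|_{L^{n/2s}}$, $\|u_{\bar x,\lambda}^{q}\|_{L^{n/2s}}$ and $\|V_\lambda\|_{L^{n/(n-\sigma)}}$ appearing in \eqref{4-7} -- all finite by $(\star)$ together with \eqref{4-8} and \eqref{4-25}--\eqref{4-35} -- become arbitrarily small as $\delta\to0$ by absolute continuity of the integral, so the Hardy-Littlewood-Sobolev estimate \eqref{4-7} closes with contraction constant $<1$ (that is, \eqref{4-37} holds with $\lambda$ in place of $\epsilon_0$), giving $\|\omega_\lambda\|_{L^r(B_\lambda^-(\bar x))}=0$ and then $B_\lambda^-(\bar x)=\emptyset$ by continuity of $\omega_\lambda$. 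Making this shell-confinement (and the subcritical adjustments in the strong maximum principle step, where the factors $(\lambda/|y-x_0|)^\tau$ and $(\lambda/|z-x_0|)^\mu$ are no longer trivial) rigorous is the only genuinely technical point; everything else recycles estimates already established above.
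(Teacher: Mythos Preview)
Your proposal is correct and follows essentially the same route as the paper: derive the slow-decay property $|x|^{n-2s}u(x)\to+\infty$ from $\lambda_{x_0}=\infty$, and then show that any finite critical scale $\Lambda=\lambda_{\bar x}$ would force $u\equiv u_{\bar x,\Lambda}$ and hence $|x|^{n-2s}u(x)\to\Lambda^{n-2s}u(\bar x)<\infty$, a contradiction. The only organizational difference is that the paper isolates your Step~2 dichotomy as a separate statement (Proposition~\ref{prop}: $\lambda_{x_0}<\infty\Rightarrow u_{x_0,\lambda_{x_0}}\equiv u$), proving it first via exactly the strict-positivity argument \eqref{4-38} and the narrow-region continuation \eqref{4-10}--\eqref{4-15} that you sketch inline; your shell-confinement and absolute-continuity argument for the norms in \eqref{4-7} is precisely their \eqref{4-13}--\eqref{4-14}.
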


In order to prove Lemma \ref{lem1}, we need the following proposition.
\begin{prop}\label{prop}
If $\lambda_{x_0}<+\infty$, then $u_{x_0,\lambda_{x_0}}(x)\equiv u(x)$ for any $x\in B_{\lambda_{x_0}}(x_0) \setminus\{x_0\}$.
\end{prop}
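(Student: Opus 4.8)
The plan is to argue by contradiction: assume $\lambda_{x_0}<+\infty$ but $\omega_{\lambda_{x_0}}\not\equiv 0$ in $B_{\lambda_{x_0}}(x_0)\setminus\{x_0\}$, and then show that the spheres can still be moved slightly further, i.e. $\omega_{\lambda}\geq 0$ in $B_\lambda(x_0)\setminus\{x_0\}$ for some $\lambda>\lambda_{x_0}$, contradicting the definition of $\lambda_{x_0}$ in \eqref{4-9}. By continuity of $\omega_\lambda$ (up from \eqref{4-17} and the monotonicity built into \eqref{4-9}) we already know $\omega_{\lambda_{x_0}}\geq 0$ in $B_{\lambda_{x_0}}(x_0)\setminus\{x_0\}$. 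The first step is a strong-maximum-principle-type improvement: using the integral identity \eqref{4-21}--\eqref{4-22}, the strict positivity of the kernels $K_1^\lambda(x,y)>0$ and $K_2^\lambda(x,y)>0$ on $B_\lambda(x_0)$, together with the lower bounds \eqref{4-25}, \eqref{4-26}, \eqref{4-31}, \eqref{4-32} on $u_{x_0,\lambda}$ and $P_{x_0,\lambda}$, one upgrades $\omega_{\lambda_{x_0}}\geq 0$ to a quantitative strict lower bound: there exists $c_0>0$ with
\begin{equation*}
  \omega_{\lambda_{x_0}}(x)\geq c_0>0 \qquad \text{for all } x \text{ in any fixed compact subset of } B_{\lambda_{x_0}}(x_0)\setminus\{x_0\}.
\end{equation*}
Here one must note that $\omega_{\lambda_{x_0}}\not\equiv 0$ forces the set where $\omega_{\lambda_{x_0}}>0$ to have positive measure, and then the positivity of $K_1^\lambda$, $K_2^\lambda$ propagates this to a pointwise strict inequality on the whole punctured ball (the $P(y)\bigl[u_{x_0,\lambda}^q-u^q\bigr]$ and $\bigl[P_{x_0,\lambda}-P\bigr]u_{x_0,\lambda}^q$ terms in \eqref{4-21} are both nonnegative and the first is strictly positive on a set of positive measure).

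The second step is to perturb $\lambda$. For $\lambda$ slightly larger than $\lambda_{x_0}$, write $B_\lambda(x_0)=\bigl(B_{\lambda}(x_0)\setminus \overline{B_{\lambda_{x_0}-\delta}(x_0)}\bigr)\cup B_{\lambda_{x_0}-\delta}(x_0)$ for a small $\delta>0$. On the inner region $B_{\lambda_{x_0}-\delta}(x_0)$ one uses the strict positivity $\omega_{\lambda_{x_0}}\geq c_0$ from Step 1 together with continuity of $(\lambda,x)\mapsto\omega_\lambda(x)$ to conclude $\omega_\lambda\geq 0$ there for $\lambda$ close to $\lambda_{x_0}$. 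On the thin outer shell $B_\lambda(x_0)\setminus \overline{B_{\lambda_{x_0}-\delta}(x_0)}$, whose Lebesgue measure is $O(\delta)$, one re-runs the Hardy--Littlewood--Sobolev / Hölder estimate exactly as in \eqref{4-7}, but now with $B_\lambda^-(x_0)$ contained in this shell: since $\|PU_\lambda\|_{L^{n/2s}}$ and $\|u_{x_0,\lambda}^q\|_{L^{n/2s}}\|V_\lambda\|_{L^{n/(n-\sigma)}}$ are taken over a set of small measure (and the relevant quantities are locally bounded away from the singularity $x_0$ — the upper bounds \eqref{4-34}, \eqref{4-35} control $u_{x_0,\lambda}^q$ on $Q_\lambda(x_0)$), the prefactor can be made $<\tfrac12$ by choosing $\delta$ small, forcing $\|\omega_\lambda\|_{L^r(B_\lambda^-(x_0))}=0$ and hence $B_\lambda^-(x_0)=\emptyset$ on the shell. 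Combining the two regions gives $\omega_\lambda\geq 0$ on all of $B_\lambda(x_0)\setminus\{x_0\}$ for some $\lambda>\lambda_{x_0}$, the desired contradiction.

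The main obstacle I expect is making Step 1 (the strict-positivity upgrade) genuinely rigorous near the singular center $x_0$ and near the boundary sphere $\partial B_{\lambda_{x_0}}(x_0)$, where $K_1^\lambda(x,y)$ and the Kelvin transform degenerate; the fix is to work on compact subsets $K\Subset B_{\lambda_{x_0}}(x_0)\setminus\{x_0\}$ and obtain $c_0=c_0(K)$, which is exactly what Step 2 needs since there $\delta$ is fixed before $\lambda$ is chosen. A secondary technical point is that in Step 2 one needs the constants in the HLS estimate \eqref{4-7} to depend only on $n,s,\sigma,r$ and on bounds for $u,P$ on a fixed large ball — not on $\lambda$ — so that shrinking the measure of the shell genuinely shrinks the prefactor; this is guaranteed because for $\lambda$ in a bounded neighborhood of $\lambda_{x_0}$ all the quantities $P$, $u$, $u_{x_0,\lambda}$, $P_{x_0,\lambda}$ appearing are uniformly controlled on the relevant (bounded, singularity-avoiding) regions by continuity and by \eqref{4-34}--\eqref{4-35}.
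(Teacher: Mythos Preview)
Your proposal is correct and follows essentially the same contradiction scheme as the paper: upgrade $\omega_{\lambda_{x_0}}\geq 0$ to strict positivity via the integral identity \eqref{4-21} and the positivity of $K_1^{\lambda}$, then use continuity to keep $\omega_\lambda$ positive on a compact core for $\lambda$ slightly past $\lambda_{x_0}$, and finally run the HLS/H\"older estimate \eqref{4-7} on the remaining thin set to force $B_\lambda^-(x_0)=\emptyset$.

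The one place the paper is a bit cleaner than your write-up is the decomposition in Step~2. You split $B_\lambda(x_0)$ into $B_{\lambda_{x_0}-\delta}(x_0)$ and an outer shell, but $B_{\lambda_{x_0}-\delta}(x_0)$ still contains a punctured neighborhood of $x_0$ where $(\lambda,x)\mapsto\omega_\lambda(x)$ is \emph{not} jointly continuous, so the ``continuity'' argument there is not literally applicable. The paper avoids this by taking the narrow region to be
\[
A_{\delta_1}=\{0<|x-x_0|<\delta_1\}\cup\{\lambda_{x_0}-\delta_1<|x-x_0|<\lambda_{x_0}\},
\]
i.e.\ it throws a small punctured ball around $x_0$ into the ``thin'' set as well, so that the compact core $A_{\delta_1}^c$ is a genuine compact annulus in $B_{\lambda_{x_0}}(x_0)\setminus\{x_0\}$ on which the strict lower bound and the continuity argument are unproblematic. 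You correctly flag this issue in your final paragraph and propose exactly this fix (work on compact subsets of the punctured ball); simply carrying that fix into the decomposition makes your argument coincide with the paper's.
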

\begin{proof}[Proof of Proposition \ref{prop}]
Proposition \ref{prop} will be proved by contradiction arguments. By the definition of $\lambda_{x_0}$, we know that $\omega_{\lambda_{x_0}}\geq 0$ in $B_{\lambda_{x_0}}(x_0)\setminus\{x_0\}$. Suppose on the contrary that $\omega_{\lambda_{x_0}}\not\equiv 0$, then there exists $\bar{x}\in B_{\lambda_{x_0}}(x_0) \setminus\{x_0\}$ and $\delta>0$ such that $B_\delta(\bar x)\subset B_{\lambda_{x_0}}(x_0) \setminus\{x_0\}$ and $\omega_{\lambda_{x_0}}>C>0$ in $B_\delta(\bar x)$. Then, by \eqref{4-21} and \eqref{4-22}, we have for any $x\in B_{\lambda_{x_0}}(x_0)\setminus\{x_0\}$,
\begin{align}\label{4-38}
	\omega_{\lambda_{x_0}}(x)&\geq\int_{B_{\lambda_{x_0}}(x_0)}K_1^{\lambda_{x_{0}}}(x,y)P(y)\left(u_{x_{0},\lambda_{x_{0}}}^{q}(y)-u^{q}(y)\right)dy \\
    &\geq q\int_{B_{\lambda_{x_0}}(x_0)}K_1^{\lambda_{x_{0}}}(x,y)P(y)\min\{u_{x_0,\lambda_{x_0}}^{q-1}(y), u^{q-1}(y)\}\omega_{\lambda_{x_{0}}}(y)dy \nonumber\\
	&\geq q\int_{B_{\delta}(\bar{x})}K_1^{\lambda_{x_{0}}}(x,y)P(y)\min\{u_{x_0,\lambda_{x_0}}^{q-1}(y), u^{q-1}(y)\}\omega_{\lambda_{x_{0}}}(y)dy \nonumber\\
	&>0. \nonumber
\end{align}
Let $\delta_1>0$ be sufficiently small, which will be determined later. Define the narrow region
\begin{equation}\label{4-10}
	A_{\delta_1}:=\{x\in \mathbb{R}^n \,|\,0<|x-x_0|<\delta_1\, \text{or}\,\lambda_{x_0}-\delta_1<|x-x_0|<\lambda_{x_0}\}.
\end{equation}	
Since $\omega_{\lambda_{x_0}}$ is continuous in $\mathbb{R}^{n}\setminus\{x_0\}$ and $A_{\delta_1}^c:=\left(B_{\lambda_{x_0}}(x_0)\setminus\{x_{0}\}\right)\setminus A_{\delta_1}$ is a compact subset, there exists a $C_0>0$ such that
\begin{equation}\label{4-11}
	\omega_{\lambda_{x_0}}(x)>C_0, \quad\quad \forall \,\, x\in A_{\delta_1}^c.
\end{equation}
By continuity, we can choose $\delta_2>0$ (depending on $\delta_{1}$) sufficiently small such that, for any $\lambda\in [\lambda_{x_0},\,\lambda_{x_0}+\delta_2]$,
\begin{equation}\label{4-12}
	\omega_{\lambda}(x)>\frac{C_0}{2}, \quad\quad \forall \,\, x\in A_{\delta_1}^c.
\end{equation}
Hence we must have, for any $\lambda\in [\lambda_{x_{0}},\,\lambda_{x_{0}}+\delta_2]$,
\begin{equation}\label{4-13}
	B_{\lambda}^-(x_0)\subset \left(B_{\lambda}(x_0)\setminus\{x_{0}\}\right)\setminus A^{c}_{\delta_1}=\{x\,|\,0<|x-x_0|<\delta_1\,\, \text{or}\,\,\lambda_{x_0}-\delta_1<|x-x_0|<\lambda\}.
\end{equation}
By \eqref{4-13}, the continuity of $u$ and $P$, the lower bounds \eqref{4-25}, \eqref{4-26} and \eqref{4-29} of $u_{x_{0},\lambda}$ in $B_{\lambda}(x_0)\setminus\{x_0\}$ and the upper bounds \eqref{4-34} and \eqref{4-35} of $u_{x_{0},\lambda}^{q}$ in $Q_{\lambda}(x_{0})$, we can choose $\delta_1$ sufficiently small (and $\delta_2$ more smaller if necessary) such that, for any $\lambda\in[\lambda_{x_{0}},\,\lambda_{x_{0}}+\delta_2]$,
\begin{equation}\label{4-14}
C\left({\left\|PU_{\lambda}\right\|}_{L^{\frac{n}{2s}}(B_\lambda^-(x_0))}+{\left\|u_{x_0,\lambda}^{q}\right\|}_{L^{\frac{n}{2s}}(Q_\lambda(x_0))} {\left\|V_{\lambda}\right\|}_{L^{\frac{n}{n-\sigma}}(B_\lambda^-(x_0))}\right)<\frac{1}{2},
\end{equation}
where $C$ is the constant in the last inequality of \eqref{4-7}. Immediately, we conclude from \eqref{4-7} that $\|\omega_{\lambda}\|_{L^{r}(B_\lambda^-(x_0))}=0$, and thus $B_\lambda^-(x_0)=\emptyset$ for any $\lambda\in[\lambda_{x_{0}},\,\lambda_{x_{0}}+\delta_2]$. Then, we obtain that for any $\lambda\in[\lambda_{x_{0}},\,\lambda_{x_{0}}+\delta_2]$,
\begin{equation}\label{4-15}
	\omega_{\lambda}(x)\geq 0, \quad\quad \forall \,\, x\in B_{\lambda}(x_0)\setminus\{x_0\},
\end{equation}
which contradicts the definition of critical scale $\lambda_{x_{0}}$. This finishes our proof of Proposition \ref{prop}.	
\end{proof}

\begin{proof}[Proof of Lemma \ref{lem1}]
Now we are ready to prove Lemma \ref{lem1}. If there exists a $x_0\in \mathbb{R}^n$ such that $\lambda_{x_0}=+\infty$, we have, for any $\lambda>0$,
\begin{equation}\label{4-39}
  \omega_\lambda(x)\geq 0, \qquad \forall \,\, x\in B_\lambda(x_0)\setminus\{x_0\},
\end{equation}
which implies that, for any $\lambda>0$,
\begin{equation}\label{4-40}
  u(x)\geq u_{x_{0},\lambda}(x), \qquad \forall \,\, |x-x_0|>\lambda.
\end{equation}
Then, due to the arbitrariness of $\lambda>0$, \eqref{4-40} yields that
\begin{equation}\label{4-41}
  \lim_{|x|\rightarrow+\infty}|x|^{n-2s}u(x)=+\infty.
\end{equation}
However, if we assume there exists another point $z_0\in \mathbb{R}^n$ such that $\lambda_{z_0}<+\infty$, then by Proposition \ref{prop}, we have
\begin{equation}\label{4-42}
  u_{z_{0},\lambda_{z_0}}(x)=u(x), \qquad \forall \,\, x\in\mathbb{R}^n\setminus\{z_0\}.
\end{equation}
The above identity immediately implies that
\begin{equation}\label{4-43}
  \lim_{|x|\rightarrow+\infty}|x|^{n-2s}u(x)=\left(\lambda_{z_{0}}\right)^{n-2s}u(z_{0})<+\infty.
\end{equation}
A contradiction! This concludes the proof of Lemma \ref{lem1}.
\end{proof}

In order to derive the classification results, we also need the following calculus Lemma (see Lemma 11.1 and Lemma 11.2 in \cite{LZ1}, see also \cite{Li,Xu}).
\begin{lem}(\cite{LZ1})\label{lem2}
Let $n\geq 1$, $\nu\in\mathbb{R}$ and $u\in C^{1}({\mathbb{R}^n})$. For every $x_0\in \mathbb{R}^n$ and $\lambda>0$, define $u_{x_0,\lambda}(x):={\left(\frac{\lambda}{|x-x_0|}\right)}^{\nu}u\left(\frac{\lambda^2(x-x_0)}{|x-x_0|^2}+x_0\right)$ for $x\in\mathbb{R}^{n}\setminus\{x_{0}\}$. Then, we have\\
\emph{(i)} If for every $x_0\in \mathbb{R}^n$, there exists a $0<\lambda_{x_0}<+\infty$ such that
\begin{equation*}
  u_{x_0,\lambda_{x_0}}(x)=u(x), \qquad \forall \,\, x\in\mathbb{R}^n\setminus\{x_0\},
\end{equation*}
then for some $C\in\mathbb{R}$, $\mu>0$ and $\bar x\in \mathbb{R}^n$,
\begin{equation*}
  u(x)=C{\left(\frac{\mu}{1+\mu^{2}|x-\bar x|^2}\right)}^{\frac{\nu}{2}}.
\end{equation*}
\emph{(ii)} If for every $x_0\in \mathbb{R}^n$ and $\lambda>0$,
\begin{equation*}
  u_{x_0,\lambda}(x)\geq u(x), \qquad \forall \,\, x\in B_\lambda(x_0)\setminus\{x_0\},
\end{equation*}
then $u\equiv C$ for some constant $C\in\mathbb{R}$.
\end{lem}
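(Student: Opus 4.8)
The plan is to prove the two assertions separately, with (ii) serving as a warm‑up for the genuinely delicate (i). For (ii), the approach is the standard derivative argument. First I would fix $x_0\in\mathbb{R}^n$ and note that, since $x\mapsto x^\lambda$ is an involution with $|x^\lambda-x_0|=\lambda^2/|x-x_0|$, substituting $x\mapsto x^\lambda$ into the hypothesis $u_{x_0,\lambda}\ge u$ on $B_\lambda(x_0)\setminus\{x_0\}$ yields the dual inequality $u_{x_0,\lambda}(x)\le u(x)$ whenever $|x-x_0|>\lambda$, with equality on $\{|x-x_0|=\lambda\}$. Hence for fixed $x\ne x_0$ the $C^1$ map $\lambda\mapsto u_{x_0,\lambda}(x)$ equals $u(x)$ at $\lambda=|x-x_0|$, lies below it for smaller $\lambda$ and above it for larger $\lambda$, so its derivative at $\lambda=|x-x_0|$ is $\ge0$. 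Differentiating $u_{x_0,\lambda}(x)=(\lambda/|x-x_0|)^{\nu}u\bigl(x_0+\tfrac{\lambda^2}{|x-x_0|^2}(x-x_0)\bigr)$ there and clearing the positive factor $|x-x_0|$ gives $\nu\,u(x)+2(x-x_0)\cdot\nabla u(x)\ge0$ for all $x\ne x_0$. Then I would fix $x$ and a unit vector $e$, take $x_0=x-Re$, and let $R\to+\infty$: the inequality becomes $\nu u(x)+2R\,e\cdot\nabla u(x)\ge0$ for every $R>0$, which forces $e\cdot\nabla u(x)\ge0$; applying this with $\pm e$ gives $\nabla u\equiv0$, i.e. $u\equiv C$.

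For (i), the first step is to rewrite $u_{x_0,\lambda_{x_0}}\equiv u$ as the functional equation $u(x^{\lambda_{x_0}})=\bigl(|x-x_0|/\lambda_{x_0}\bigr)^{\nu}u(x)$, $x\ne x_0$, and to extract from it the behaviour of $u$ at infinity. Letting $x\to x_0$, so that $w:=x^{\lambda_{x_0}}\to\infty$ with $|w-x_0|=\lambda_{x_0}^2/|x-x_0|$, and using continuity of $u$, one finds that $A:=\lim_{|w|\to\infty}|w|^{\nu}u(w)$ exists and equals $\lambda_{x_0}^{\nu}u(x_0)$ for \emph{every} $x_0$. If $A=0$ then $u\equiv0$ and we take $C=0$; if $\nu=0$ this identity already gives $u\equiv A$. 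So from here on $\nu\ne0$ and $A\ne0$; then $u(x_0)=A/\lambda_{x_0}^{\nu}$ never vanishes, and after replacing $u$ by $-u$ if necessary we may assume $u>0$ everywhere, so $u\in C^1$ is positive and $\lambda_{x_0}^{\nu}=A/u(x_0)$.

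The decisive step in (i) is a linearization. I would introduce $\Phi:=A^{2/\nu}u^{-2/\nu}\in C^1(\mathbb{R}^n)$, which is positive; raising the functional equation to the power $-2/\nu$ and inserting $\lambda_{x_0}^2=\Phi(x_0)$ turns it into the homogeneous relation $\Phi(x^{\lambda_{x_0}})=\Phi(x_0)|x-x_0|^{-2}\Phi(x)$ with $\lambda_{x_0}=\sqrt{\Phi(x_0)}$. Next I would plug $x=x_0+te$ ($t\downarrow0$, $e$ a unit vector) into this relation, expand $\Phi(x_0+te)=\Phi(x_0)+t\,e\cdot\nabla\Phi(x_0)+o(t)$, and use $w=x^{\lambda_{x_0}}=x_0+\tfrac{\Phi(x_0)}{t}e$ together with $t=\Phi(x_0)/|w-x_0|$ and $(w-x_0)/|w-x_0|=e$ to obtain, for \emph{every} centre $x_0$, the first‑order asymptotics
\begin{equation*}
\Phi(w)=|w-x_0|^{2}+(w-x_0)\cdot\nabla\Phi(x_0)+o(|w|)\qquad\text{as }|w|\to\infty .
\end{equation*}
Writing this for two centres $x_0$ and $x_1$ and subtracting, the quadratic terms cancel and the terms of order $|w|$ must cancel as well, forcing $\nabla\Phi(x_0)-\nabla\Phi(x_1)=2(x_0-x_1)$ for all $x_0,x_1$; hence $\nabla\Phi(x)\equiv2(x-\bar x)$ for a fixed $\bar x\in\mathbb{R}^n$, and integrating yields $\Phi(x)=b+|x-\bar x|^{2}$ with $b=\Phi(\bar x)>0$. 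Therefore $u(x)=A\bigl(b+|x-\bar x|^{2}\bigr)^{-\nu/2}$, which is exactly the claimed form $C\bigl(\mu/(1+\mu^{2}|x-\bar x|^{2})\bigr)^{\nu/2}$ with $\mu=b^{-1/2}$ and a suitable $C\in\mathbb{R}$.

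The main obstacle is precisely this last linearization: converting the multiplicative functional equation for $\Phi$ into the rigid conclusion that $\Phi$ is a translate of $|x|^{2}$. The trick that makes it work is that the first‑order Taylor expansion of $\Phi$ at infinity holds with an \emph{arbitrary} centre, so comparing two centres at once annihilates the common quadratic growth and linearizes the constraint on $\nabla\Phi$; the only regularity used is $u\in C^1$. Beyond that, some care is needed to dispose of the degenerate cases $u\equiv0$, $\nu=0$ and $A<0$ separately, as indicated above.
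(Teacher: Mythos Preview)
The paper does not prove this lemma at all: it is quoted from \cite{LZ1} (Li--Zhang), with Remark~\ref{rem0} noting that their methods extend to $\nu\le 0$ via \cite{Li,Xu}. So there is no ``paper's own proof'' to compare against; your proposal is in fact a self-contained proof of a result the authors simply import.

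That said, your argument is correct and is essentially the Li--Zhang approach. For (ii), the derivative test at $\lambda=|x-x_0|$ giving $\nu\,u(x)+2(x-x_0)\cdot\nabla u(x)\ge 0$, followed by sending $x_0\to\infty$ along a ray, is exactly the standard proof. For (i), your key steps --- extracting $A=\lim_{|w|\to\infty}|w|^{\nu}u(w)=\lambda_{x_0}^{\nu}u(x_0)$ from the functional equation, passing to $\Phi=A^{2/\nu}u^{-2/\nu}$ so that $\lambda_{x_0}^{2}=\Phi(x_0)$ and $\Phi(x^{\lambda_{x_0}})=\Phi(x_0)\Phi(x)/|x-x_0|^{2}$, then Taylor-expanding at $x_0$ to get the asymptotics $\Phi(w)=|w-x_0|^{2}+(w-x_0)\cdot\nabla\Phi(x_0)+o(|w|)$ and comparing two centres to force $\nabla\Phi(x)=2(x-\bar x)$ --- reproduce the argument of \cite{LZ1,Li}. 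Your handling of the degenerate cases $A=0$, $\nu=0$, and the sign normalization is also fine (note $\lambda_{x_0}^{\nu}>0$ always, so $u$ inherits the sign of $A$). The only small point worth making explicit is that the $o(t)$ Taylor remainder is uniform in the unit vector $e$ by compactness of the sphere, so the $o(|w|)$ in your asymptotic expansion is uniform in direction; this is what justifies cancelling the order-$|w|$ terms when you subtract the expansions at two centres.
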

\begin{rem}\label{rem0}
In Lemma 11.1 and Lemma 11.2 of \cite{LZ1}, Li and Zhang have proved Lemma \ref{lem2} for $\nu>0$. Nevertheless, their methods can also be applied to show Lemma \ref{lem2} in the cases $\nu\leq0$, see \cite{Li,Xu}.
\end{rem}

In the subcritical cases $0<p<\frac{2n-\sigma}{n-2s}$ or $0<q<\frac{n+2s-\sigma}{n-2s}$, we have $\tau>0$ or $\mu>0$. Without loss of generality, suppose that $\tau>0$ and $\mu>0$. If there exists a $x_0\in\mathbb{R}^{n}$ such that the critical scale $\lambda_{x_0}<+\infty$, then by Proposition \ref{prop}, we have $\omega_{\lambda_{x_0}}(x)=u_{x_0,\lambda_{x_0}}(x)-u(x)\equiv 0$ for any $x\in\mathbb{R}^{n}\setminus\{x_{0}\}$. However, from \eqref{4-21}, \eqref{4-22}, $\tau>0$ and $\mu>0$, we deduce
\begin{align}\label{4-16}
&\quad \omega_{\lambda_{x_{0}}}(x)=u_{x_0,\lambda_{x_{0}}}(x)-u(x)\\
&=\int_{B_{\lambda_{x_{0}}}(x_0)} K_1^{\lambda_{x_{0}}}(x,y)\left[P_{x_0,\lambda_{x_{0}}}(y){\left(\frac{\lambda_{x_{0}}}{|y-x_0|}\right)}^{\tau}
u_{x_0,\lambda_{x_{0}}}^{q}(y)-P(y)u^{q}(y)\right]dy \nonumber\\
&>\int_{B_{\lambda_{x_{0}}}(x_0)} K_1^{\lambda_{x_{0}}}(x,y)\left[P_{x_0,\lambda_{x_{0}}}(y)u_{x_0,\lambda_{x_{0}}}^{q}(y)-P(y)u^{q}(y)\right]dy \nonumber\\
&=\int_{B_{\lambda_{x_{0}}}(x_0)}K_1^{\lambda_{x_{0}}}(x,y)u^{q}(y) \nonumber \\
&\quad \times\left(\int_{B_{\lambda_{x_{0}}}(x_0)}K_2^{\lambda_{x_{0}}}(y,z)
\left[\left(\frac{\lambda_{x_{0}}}{|z-x_0|}\right)^{\mu}u_{x_0,\lambda_{x_{0}}}^{p}(z)-u^{p}(z)\right]dz\right)dy \nonumber\\
&>\int_{B_{\lambda_{x_{0}}}(x_0)}K_1^{\lambda_{x_{0}}}(x,y)\left(\int_{B_{\lambda_{x_{0}}}(x_0)}K_2^{\lambda_{x_{0}}}(y,z)
\left[u_{x_0,\lambda_{x_{0}}}^{p}(z)-u^{p}(z)\right]dz\right)u^{q}(y)dy=0, \nonumber
\end{align}
which is a contradiction! Therefore, by Lemma \ref{lem1}, we must have, for every  $x_0\in\mathbb{R}^n$, the critical scale $\lambda_{x_0}=+\infty$. Then, by Lemma \ref{lem2} and the integrability $\int_{\mathbb{R}^n}\frac{u^{p}(x)}{|x|^{\sigma}}dx<+\infty$, we conclude that $u\equiv 0$. This is absurd since $u>0$ in $\mathbb{R}^{n}$. Therefore, there is no non-trivial nonnegative solution to IE \eqref{IE} in the subcritical cases.

Now we consider the critical case $p=\frac{2n-\sigma}{n-2s}$ and $q=\frac{n+2s-\sigma}{n-2s}$. Suppose that for every $x_0\in\mathbb{R}^{n}$, the critical scale $\lambda_{x_0}=+\infty$. Then, by Lemma \ref{lem2} and the integrability $\int_{\mathbb{R}^n}\frac{u^{p}(x)}{|x|^{\sigma}}dy<+\infty$, we conclude that $u\equiv 0$, which contradicts with $u>0$ in $\mathbb{R}^{n}$. Therefore, we must have, for every $x_0\in\mathbb{R}^n$, the critical scale $\lambda_{x_0}<+\infty$. Then by Proposition \ref{prop} and Lemma \ref{lem2}, we have
\begin{equation}\label{4-44}
  u(x)=C{\left(\frac{\mu}{1+\mu^{2}|x-\bar{x}|^2}\right)}^{\frac{n-2s}{2}}
\end{equation}
for some $C>0$, $\mu>0$ and $\bar{x}\in\mathbb{R}^n$. Consequently, we have proved, in the critical case, that any nontrivial nonnegative solution $u$ to IE \eqref{IE} must have the form \eqref{4-44} for some $C>0$, $\mu>0$ and $\bar{x}\in\mathbb{R}^n$. Furthermore, from (37) in Lemma 4.1 in \cite{DFHQW}, we have the following formula:
\begin{equation}\label{formula}
  \int_{\mathbb{R}^{n}}\frac{1}{|x-y|^{2\gamma}}\left(\frac{1}{1+|y|^{2}}\right)^{n-\gamma}dy=I(\gamma)\left(\frac{1}{1+|x|^{2}}\right)^{\gamma}
\end{equation}
for any $0<\gamma<\frac{n}{2}$, where $I(\gamma):=\frac{\pi^{\frac{n}{2}}\Gamma\left(\frac{n-2\gamma}{2}\right)}{\Gamma(n-\gamma)}$. By \eqref{formula}, \eqref{IE} and direct calculations, we deduce that the constant $C$ in \eqref{4-44} is given by
\begin{equation}\label{4-45}
  C=\left(\frac{1}{R_{2s,n}I\left(\frac{\sigma}{2}\right)I\left(\frac{n-2s}{2}\right)}\right)^{\frac{n-2s}{2(n+2s-\sigma)}}.
\end{equation}
This concludes our proof of Theorem \ref{Thm}.

\section{Proof of Theorem \ref{Thm3}}
In this section, using Theorem \ref{Thm0} and the arguments from Chen, Dai and Qin \cite{CDQ}, we will prove the Liouville properties in Theorem \ref{Thm3} in both critical order cases $s:=m+\frac{\alpha}{2}=\frac{n}{2}$ and super-critical order cases $s:=m+\frac{\alpha}{2}>\frac{n}{2}$.

We will prove Theorem \ref{Thm3} by using contradiction arguments. Suppose on the contrary that $u\geq0$ satisfies equation \eqref{PDE-sc} but $u$ is not identically zero, then there exists a point $\bar{x}\in\mathbb{R}^{n}$ such that $u(\bar{x})>0$. By Theorem \ref{Thm0}, we can deduce from $(-\Delta)^{\frac{\alpha}{2}}u\geq0$, $u\geq0$, $u(\bar{x})>0$ and maximum principle that
\begin{equation}\label{2-50-5}
  u(x)>0, \,\,\,\,\,\,\, \forall \,\, x\in\mathbb{R}^{n}.
\end{equation}
Moreover, by maximum principle and induction, we can also infer further from $(-\Delta)^{i+\frac{\alpha}{2}} u\geq0$ ($i=0,\cdots,m-1$), $u>0$ and equation \eqref{PDE-sc} that
\begin{equation}\label{2-51-5}
  (-\Delta)^{i+\frac{\alpha}{2}}u(x)>0, \,\,\,\,\,\,\,\, \forall \,\, i=0,\cdots,m-1, \,\,\,\, \forall \,\, x\in\mathbb{R}^{n}.
\end{equation}

Since $m+\frac{\alpha}{2}\geq\frac{n}{2}$, it follows immediately that either $m=\frac{n-1}{2}$ with $n\geq3$ odd, $m=\frac{n-2}{2}$ with $n\geq4$ even, or $m\geq\lceil\frac{n}{2}\rceil$, where $\lceil x\rceil$ denotes the least integer not less than $x$.

In the following, we will try to obtain contradictions by discussing the two different cases $m=\frac{n-1}{2}$ with $n\geq3$ odd or $m=\frac{n-2}{2}$ with $n\geq4$ even, and $m\geq\lceil\frac{n}{2}\rceil$ separately.

\emph{Case i): $m=\frac{n-1}{2}$ with $n\geq3$ odd or $m=\frac{n-2}{2}$ with $n\geq4$ even.} Since $m+\frac{\alpha}{2}\geq\frac{n}{2}$, we have $1\leq\alpha\leq2$ in the cases $m=\frac{n-1}{2}$ with $n\geq3$ odd and $\alpha=2$ in the cases $m=\frac{n-2}{2}$ with $n\geq4$ even. Now we will first show that $(-\Delta)^{m-1+\frac{\alpha}{2}}u$ satisfies the following integral equation
\begin{equation}\label{2c1-5}
  (-\Delta)^{m-1+\frac{\alpha}{2}}u(x)=\int_{\mathbb{R}^{n}}\frac{R_{2,n}}{|x-y|^{n-2}}\left(\frac{1}{|\cdot|^{\sigma}}\ast|u|^{p}\right)(y)u^{q}(y)dy, \,\,\,\,\,\,\,\,\,\, \forall \,\, x\in\mathbb{R}^{n},
\end{equation}
where the Riesz potential's constants $R_{\alpha,n}:=\frac{\Gamma(\frac{n-\alpha}{2})}{\pi^{\frac{n}{2}}2^{\alpha}\Gamma(\frac{\alpha}{2})}$ for $0<\alpha<n$.

To this end, for arbitrary $R>0$, let $f_{1}(u)(x):=\left(\frac{1}{|\cdot|^{\sigma}}\ast|u|^{p}\right)(x)u^{q}(x)$ and
\begin{equation}\label{2c2-5}
v_{1}^{R}(x):=\int_{B_R(0)}G^{2}_R(x,y)f_{1}(u)(y)dy,
\end{equation}
where the Green's function for $-\Delta$ on $B_R(0)$ is given by
\begin{equation}\label{Green-5}
  G^{2}_R(x,y)=R_{2,n}\bigg[\frac{1}{|x-y|^{n-2}}-\frac{1}{\big(|x|\cdot\big|\frac{Rx}{|x|^{2}}-\frac{y}{R}\big|\big)^{n-2}}\bigg], \,\,\,\, \text{if} \,\, x,y\in B_{R}(0),
\end{equation}
and $G^{2}_{R}(x,y)=0$ if $x$ or $y\in\mathbb{R}^{n}\setminus B_{R}(0)$. Then, we can derive that $v_{1}^{R}\in C^{2}(\mathbb{R}^{n})$ and satisfies
\begin{equation}\label{2c3-5}\\\begin{cases}
-\Delta v_{1}^{R}(x)=\left(\frac{1}{|\cdot|^{\sigma}}\ast|u|^{p}\right)(x)u^{q}(x),\ \ \ \ x\in B_R(0),\\
v_{1}^{R}(x)=0,\ \ \ \ \ \ \ x\in \mathbb{R}^{n}\setminus B_R(0).
\end{cases}\end{equation}
Let $w_{1}^R(x):=(-\Delta)^{m-1+\frac{\alpha}{2}}u(x)-v_{1}^R(x)$. By Theorem \ref{Thm0}, \eqref{PDE-sc} and \eqref{2c3-5}, we have $w_{1}^R\in C^{2}(\mathbb{R}^{n})$ and satisfies
\begin{equation}\label{2c4-5}\\\begin{cases}
-\Delta w_{1}^R(x)=0,\ \ \ \ x\in B_R(0),\\
w_{1}^{R}(x)>0, \,\,\,\,\, x\in \mathbb{R}^{n}\setminus B_R(0).
\end{cases}\end{equation}
By maximum principle, we deduce that for any $R>0$,
\begin{equation}\label{2c5-5}
  w_{1}^R(x)=(-\Delta)^{m-1+\frac{\alpha}{2}}u(x)-v_{1}^{R}(x)>0, \,\,\,\,\,\,\, \forall \,\, x\in\mathbb{R}^{n}.
\end{equation}
Now, for each fixed $x\in\mathbb{R}^{n}$, letting $R\rightarrow\infty$ in \eqref{2c5-5}, we have
\begin{equation}\label{2c6-5}
(-\Delta)^{m-1+\frac{\alpha}{2}}u(x)\geq\int_{\mathbb{R}^{n}}\frac{R_{2,n}}{|x-y|^{n-2}}f_{1}(u)(y)dy=:v_{1}(x)>0.
\end{equation}
Take $x=0$ in \eqref{2c6-5}, we get
\begin{equation}\label{2c7-5}
  \int_{\mathbb{R}^{n}}\left(\frac{1}{|\cdot|^{\sigma}}\ast|u|^{p}\right)(y)\frac{u^{q}(y)}{|y|^{n-2}}dy<+\infty.
\end{equation}
One can easily observe that $v_{1}\in C^{2}(\mathbb{R}^{n})$ is a solution of
\begin{equation}\label{2c8-5}
-\Delta v_{1}(x)=\left(\frac{1}{|\cdot|^{\sigma}}\ast|u|^{p}\right)(x)u^{q}(x),  \,\,\,\,\,\,\, x\in \mathbb{R}^n.
\end{equation}
Define $w_{1}(x):=(-\Delta)^{m-1+\frac{\alpha}{2}}u(x)-v_{1}(x)$. Then, by \eqref{PDE-sc}, \eqref{2c6-5} and \eqref{2c8-5}, we have $w_{1}\in C^{2}(\mathbb{R}^{n})$ and satisfies
\begin{equation}\label{2c9-5}\\\begin{cases}
-\Delta w_{1}(x)=0, \,\,\,\,\,  x\in \mathbb{R}^n,\\
w_{1}(x)\geq0, \,\,\,\,\,\,  x\in \mathbb{R}^n.
\end{cases}\end{equation}
From Liouville theorem for harmonic functions, we can deduce that
\begin{equation}\label{2c10-5}
   w_{1}(x)=(-\Delta)^{m-1+\frac{\alpha}{2}}u(x)-v_{1}(x)\equiv C_{1}\geq0.
\end{equation}
Therefore, we have
\begin{eqnarray}\label{2c11-5}
  (-\Delta)^{m-1+\frac{\alpha}{2}}u(x)&=&\int_{\mathbb{R}^{n}}\frac{R_{2,n}}{|x-y|^{n-2}}\left(\frac{1}{|\cdot|^{\sigma}}\ast|u|^{p}\right)(y)u^{q}(y)dy+C_{1} \\
  &=:&f_{2}(u)(x)>C_{1}\geq0. \nonumber
\end{eqnarray}

Next, for arbitrary $R>0$, let
\begin{equation}\label{2c12-5}
v_{2}^R(x):=\int_{B_R(0)}G^{2}_R(x,y)f_{2}(u)(y)dy.
\end{equation}
Then, we can get
\begin{equation}\label{2c13-5}\\\begin{cases}
-\Delta v_2^R(x)=f_{2}(u)(x),\ \ x\in B_R(0),\\
v_2^R(x)=0,\ \ \ \ \ \ \ x\in \mathbb{R}^{n}\setminus B_R(0).
\end{cases}\end{equation}
Let $w_2^R(x):=(-\Delta)^{m-2+\frac{\alpha}{2}}u(x)-v_2^R(x)$. By Theorem \ref{Thm0}, \eqref{2c11-5} and \eqref{2c13-5}, we have
\begin{equation}\label{2c14-5}\\\begin{cases}
-\Delta w_2^R(x)=0,\ \ \ \ x\in B_R(0),\\
w_2^R(x)>0, \,\,\,\,\, x\in \mathbb{R}^{n}\setminus B_R(0).
\end{cases}\end{equation}
By maximum principle, we deduce that for any $R>0$,
\begin{equation}\label{2c15-5}
  w_2^R(x)=(-\Delta)^{m-2+\frac{\alpha}{2}}u(x)-v_2^{R}(x)>0, \,\,\,\,\,\,\, \forall \,\, x\in\mathbb{R}^{n}.
\end{equation}
Now, for each fixed $x\in\mathbb{R}^{n}$, letting $R\rightarrow\infty$ in \eqref{2c15-5}, we have
\begin{equation}\label{2c16-5}
(-\Delta)^{m-2+\frac{\alpha}{2}}u(x)\geq\int_{\mathbb{R}^{n}}\frac{R_{2,n}}{|x-y|^{n-2}}f_{2}(u)(y)dy=:v_{2}(x)>0.
\end{equation}
Take $x=0$ in \eqref{2c16-5}, we get
\begin{equation}\label{2c17-5}
  \int_{\mathbb{R}^{n}}\frac{C_{1}}{|y|^{n-2}}dy\leq\int_{\mathbb{R}^{n}}\frac{f_{2}(u)(y)}{|y|^{n-2}}dy<+\infty,
\end{equation}
it follows easily that $C_{1}=0$, and hence we have proved \eqref{2c1-5}, that is,
\begin{equation}\label{2c18-5}
  (-\Delta)^{m-1+\frac{\alpha}{2}}u(x)=f_{2}(u)(x)=\int_{\mathbb{R}^{n}}\frac{R_{2,n}}{|x-y|^{n-2}}\left(\frac{1}{|\cdot|^{\sigma}}\ast|u|^{p}\right)(y)u^{q}(y)dy.
\end{equation}

One can easily observe that $v_{2}$ is a solution of
\begin{equation}\label{2c19-5}
-\Delta v_{2}(x)=f_{2}(u)(x),  \,\,\,\,\, x\in \mathbb{R}^n.
\end{equation}
Define $w_{2}(x):=(-\Delta)^{m-2+\frac{\alpha}{2}}u(x)-v_{2}(x)$, then it satisfies
\begin{equation}\label{2c20-5}\\\begin{cases}
-\Delta w_{2}(x)=0, \,\,\,\,\,  x\in \mathbb{R}^n,\\
w_{2}(x)\geq0, \,\,\,\,\,\,  x\in\mathbb{R}^n.
\end{cases}\end{equation}
From Liouville theorem for harmonic functions, we can deduce that
\begin{equation}\label{2c21-5}
   w_{2}(x)=(-\Delta)^{m-2+\frac{\alpha}{2}}u(x)-v_{2}(x)\equiv C_{2}\geq0.
\end{equation}
Therefore, we have proved that
\begin{equation}\label{2c22-5}
  (-\Delta)^{m-2+\frac{\alpha}{2}}u(x)=\int_{\mathbb{R}^{n}}\frac{R_{2,n}}{|x-y|^{n-2}}f_{2}(u)(y)dy+C_{2}=:f_{3}(u)(x)>C_{2}\geq0.
\end{equation}
By the same methods as above, we can prove that $C_{2}=0$, and hence
\begin{equation}\label{2c23-5}
  (-\Delta)^{m-2+\frac{\alpha}{2}}u(x)=f_{3}(u)(x)=\int_{\mathbb{R}^{n}}\frac{R_{2,n}}{|x-y|^{n-2}}f_{2}(u)(y)dy.
\end{equation}
Repeating the above argument, defining
\begin{equation}\label{2c24-5}
  f_{k+1}(u)(x):=\int_{\mathbb{R}^{n}}\frac{R_{2,n}}{|x-y|^{n-2}}f_{k}(u)(y)dy
\end{equation}
for $k=1,2,\cdots,m$, then by Theorem \ref{Thm0} and induction, we have
\begin{equation}\label{2c25-5}
  (-\Delta)^{m-k+\frac{\alpha}{2}}u(x)=f_{k+1}(u)(x)=\int_{\mathbb{R}^{n}}\frac{R_{2,n}}{|x-y|^{n-2}}f_{k}(u)(y)dy
\end{equation}
for $k=1,2,\cdots,m-1$, and
\begin{equation}\label{2c50-5}
  (-\Delta)^{\frac{\alpha}{2}}u(x)=\int_{\mathbb{R}^{n}}\frac{R_{2,n}}{|x-y|^{n-2}}f_{m}(u)(y)dy+C_{m}=f_{m+1}(u)(x)+C_{m}>C_{m}\geq0.
\end{equation}
For arbitrary $R>0$, let
\begin{equation}\label{2c12+5}
v_{m+1}^R(x):=\int_{B_R(0)}G^{\alpha}_R(x,y)\left(f_{m+1}(u)(y)+C_{m}\right)dy,
\end{equation}
where, in the cases $0<\alpha<2$, the Green's function for $(-\Delta)^{\frac{\alpha}{2}}$ on $B_R(0)$ is given by
\begin{equation}\label{2-8c+5}
G^\alpha_R(x,y):=\frac{C_{n,\alpha}}{|x-y|^{n-\alpha}}\int_{0}^{\frac{t_{R}}{s_{R}}}\frac{b^{\frac{\alpha}{2}-1}}{(1+b)^{\frac{n}{2}}}db
\,\,\,\,\,\,\,\,\, \text{if} \,\, x,y\in B_{R}(0)
\end{equation}
with $s_{R}=\frac{|x-y|^{2}}{R^{2}}$, $t_{R}=\left(1-\frac{|x|^{2}}{R^{2}}\right)\left(1-\frac{|y|^{2}}{R^{2}}\right)$, and $G^{\alpha}_{R}(x,y)=0$ if $x$ or $y\in\mathbb{R}^{n}\setminus B_{R}(0)$ (see \cite{K}). Then, we can get
\begin{equation}\label{2c13+5}\\\begin{cases}
(-\Delta)^{\frac{\alpha}{2}}v_{m+1}^R(x)=f_{m+1}(u)(x)+C_{m},\ \ x\in B_R(0),\\
v_{m+1}^R(x)=0,\ \ \ \ \ \ \ x\in \mathbb{R}^{n}\setminus B_R(0).
\end{cases}\end{equation}
Let $w_{m+1}^R(x):=u(x)-v_{m+1}^R(x)$. By Theorem \ref{Thm0}, \eqref{2c50-5} and \eqref{2c13+5}, we have
\begin{equation}\label{2c14+5}\\\begin{cases}
(-\Delta)^{\frac{\alpha}{2}}w_{m+1}^R(x)=0,\ \ \ \ x\in B_R(0),\\
w_{m+1}^R(x)>0, \,\,\,\,\, x\in \mathbb{R}^{n}\setminus B_R(0).
\end{cases}\end{equation}

By maximal principle for $-\Delta$ if $\alpha=2$ and Lemma \ref{max} if $0<\alpha<2$, we can deduce immediately from \eqref{2c14+5} that for any $R>0$,
\begin{equation}\label{2c15+5}
  w_{m+1}^R(x)=u(x)-v_{m+1}^{R}(x)>0, \,\,\,\,\,\,\, \forall \,\, x\in\mathbb{R}^{n}.
\end{equation}
Now, for each fixed $x\in\mathbb{R}^{n}$, letting $R\rightarrow\infty$ in \eqref{2c15+5}, we have
\begin{equation}\label{2c16+5}
u(x)\geq\int_{\mathbb{R}^{n}}\frac{R_{\alpha,n}}{|x-y|^{n-\alpha}}\left(f_{m+1}(u)(y)+C_{m}\right)dy>0.
\end{equation}
Take $x=0$ in \eqref{2c16+5}, we get
\begin{equation}\label{2c17+5}
  \int_{\mathbb{R}^{n}}\frac{C_{m}}{|y|^{n-\alpha}}dy\leq\int_{\mathbb{R}^{n}}\frac{f_{m+1}(u)(y)+C_{m}}{|y|^{n-\alpha}}dy<+\infty,
\end{equation}
it follows easily that $C_{m}=0$, and hence we have
\begin{equation}\label{2c18+5}
  (-\Delta)^{\frac{\alpha}{2}}u(x)=f_{m+1}(u)(x)=\int_{\mathbb{R}^{n}}\frac{R_{2,n}}{|x-y|^{n-2}}f_{m}(u)(y)dy,
\end{equation}
and
\begin{equation}\label{2c19+5}
  u(x)\geq\int_{\mathbb{R}^{n}}\frac{R_{\alpha,n}}{|x-y|^{n-\alpha}}f_{m+1}(u)(y)dy.
\end{equation}
In particular, it follows from \eqref{2c25-5}, \eqref{2c18+5} and \eqref{2c19+5} that
\begin{eqnarray}\label{2c51-5}
  && +\infty>(-\Delta)^{m-k+\frac{\alpha}{2}}u(0)=\int_{\mathbb{R}^{n}}\frac{R_{2,n}}{|y|^{n-2}}f_{k}(u)(y)dy \\
 \nonumber &\geq& \int_{\mathbb{R}^{n}}\frac{R_{2,n}}{|y^{k}|^{n-2}}\int_{\mathbb{R}^{n}}\frac{R_{2,n}}{|y^{k}-y^{k-1}|^{n-2}}\cdots
  \int_{\mathbb{R}^{n}}\frac{R_{2,n}}{|y^{2}-y^{1}|^{n-2}}\left(\frac{1}{|\cdot|^{\sigma}}\ast|u|^{p}\right)(y^{1})u^{q}(y^{1})dy^{1}\cdots dy^{k}
\end{eqnarray}
for $k=1,2,\cdots,m$, and
\begin{eqnarray}\label{formula-5}
  && +\infty>u(0)\geq\int_{\mathbb{R}^{n}}\frac{R_{\alpha,n}}{|y|^{n-\alpha}}f_{m+1}(u)(y)dy\geq\int_{\mathbb{R}^{n}}\frac{R_{\alpha,n}}{|y^{m+1}|^{n-\alpha}}\times \\
 \nonumber &&\left(\int_{\mathbb{R}^{n}}\frac{R_{2,n}}{|y^{m+1}-y^{m}|^{n-2}}\cdots
  \int_{\mathbb{R}^{n}}\frac{R_{2,n}}{|y^{2}-y^{1}|^{n-2}}\left(\frac{1}{|\cdot|^{\sigma}}\ast|u|^{p}\right)(y^{1})u^{q}(y^{1})dy^{1}\cdots dy^{m}\right)dy^{m+1}.
\end{eqnarray}
From the properties of Riesz potential, for any $\alpha_{1},\alpha_{2}\in(0,n)$ such that $\alpha_{1}+\alpha_{2}\in(0,n)$, one has(see \cite{Stein})
\begin{equation}\label{2c26-5}
  \int_{\mathbb{R}^{n}}\frac{R_{\alpha_{1},n}}{|x-y|^{n-\alpha_{1}}}\cdot\frac{R_{\alpha_{2},n}}{|y-z|^{n-\alpha_{2}}}dy
=\frac{R_{\alpha_{1}+\alpha_{2},n}}{|x-z|^{n-(\alpha_{1}+\alpha_{2})}}.
\end{equation}
By applying \eqref{2c26-5} and direct calculations, we obtain that
\begin{eqnarray}\label{2c27-5}
  && \int_{\mathbb{R}^{n}}\frac{R_{2,n}}{|y^{m+1}-y^{m}|^{n-2}}\cdots
\int_{\mathbb{R}^{n}}\frac{R_{2,n}}{|y^{3}-y^{2}|^{n-2}}\cdot\frac{R_{2,n}}{|y^{2}-y^{1}|^{n-2}}dy^{2}\cdots dy^{m} \\
 \nonumber &=& \frac{R_{2m,n}}{|y^{m+1}-y^{1}|^{n-2m}}.
\end{eqnarray}

Now, we can deduce from \eqref{formula-5}, \eqref{2c27-5} and Fubini's theorem that
\begin{eqnarray}\label{contradiction-5}
  &&+\infty>u(0)\geq\int_{\mathbb{R}^{n}}\frac{R_{\alpha,n}}{|y^{m+1}|^{n-\alpha}}\left(\int_{\mathbb{R}^{n}}\frac{R_{n-\alpha,n}}{|y^{m+1}-y^{1}|^{n-2m}}
  \left(\frac{1}{|\cdot|^{\sigma}}\ast|u|^{p}\right)(y^{1})u^{q}(y^{1})dy^{1}\right) \\
 \nonumber &&\qquad\quad dy^{m+1}=\frac{1}{(2\pi)^{n}}\int_{\mathbb{R}^{n}}\frac{1}{|y|^{n-\alpha}}\left(\int_{\mathbb{R}^{n}}\frac{1}{|y-z|^{n-2m}}
 \left(\frac{1}{|\cdot|^{\sigma}}\ast|u|^{p}\right)(z)u^{q}(z)dz\right)dy.
\end{eqnarray}

We will get a contradiction from \eqref{contradiction-5}. Indeed, if we assume that $u$ is not identically zero, then by \eqref{2-50-5}, $u>0$ in $\mathbb{R}^{n}$. Hence by the integrability \eqref{2c7-5}, we have
\begin{equation}\label{2c60-5}
0<C_{0}:=\int_{\mathbb{R}^{n}}\left(\frac{1}{|\cdot|^{\sigma}}\ast|u|^{p}\right)(z)\frac{u^{q}(z)}{|z|^{n-2}}dz<+\infty.
\end{equation}
For any given $|y|\geq 3$, if $|z|\geq\big(\ln|y|\big)^{-\frac{1}{n-2}}$, then one has immediately
\begin{equation}\label{2c61-5}
|y-z|\leq|y|+|z|\leq\left(|y|\big(\ln|y|\big)^{\frac{1}{n-2}}+1\right)|z|\leq 2|y|\big(\ln|y|\big)^{\frac{1}{n-2}}|z|.
\end{equation}
Thus it follows from \eqref{2c60-5} and \eqref{2c61-5} that, there exists a $R_{0}\geq3$ sufficiently large such that, for any $|y|\geq R_{0}$, we have
\begin{eqnarray}\label{2c63-5}
&&\int_{\mathbb{R}^{n}}\left(\frac{1}{|\cdot|^{\sigma}}\ast|u|^{p}\right)(z)\frac{u^{q}(z)}{|y-z|^{n-2m}}dz \\
\nonumber &\geq& \frac{1}{2^{n-2m}|y|^{n-2m}\ln|y|}\int_{|z|\geq\left(\ln|y|\right)^{-\frac{1}{n-2}}}\left(\frac{1}{|\cdot|^{\sigma}}\ast|u|^{p}\right)(z)\frac{u^{q}(z)}{|z|^{n-2}}dz \\
\nonumber &\geq& \frac{1}{2^{n-2m+1}|y|^{n-2m}\ln|y|}\int_{\mathbb{R}^{n}}\left(\frac{1}{|\cdot|^{\sigma}}\ast|u|^{p}\right)(z)\frac{u^{q}(z)}{|z|^{n-2}}dz
\geq\frac{C_{0}}{2^{n-2m+1}|y|^{n-2m}\ln|y|}.
\end{eqnarray}

As a consequence, we can finally deduce from \eqref{contradiction-5}, \eqref{2c63-5}, $1\leq\alpha\leq2$ if $m=\frac{n-1}{2}$ with $n\geq3$ odd and $\alpha=2$ if $m=\frac{n-2}{2}$ with $n\geq4$ even that
\begin{equation}\label{final-5}
 +\infty>u(0)\geq\frac{C_{0}}{2^{n-2m+1}(2\pi)^{n}}\int_{|y|\geq R_{0}}\frac{1}{|y|^{2n-\alpha-2m}\ln|y|}dy=+\infty,
\end{equation}
which is a contradiction. Therefore $u\equiv0$ in $\mathbb{R}^{n}$. This proves Theorem \ref{Thm3} in Case i): $m=\frac{n-1}{2}$ with $n\geq3$ odd or $m=\frac{n-2}{2}$ with $n\geq4$ even.

\emph{Case ii): $m\geq\lceil\frac{n}{2}\rceil$.} Let
\begin{equation}\label{2c24-05}
  f_{k+1}(u)(x):=\int_{\mathbb{R}^{n}}\frac{R_{2,n}}{|x-y|^{n-2}}f_{k}(u)(y)dy
\end{equation}
for $k=1,2,\cdots,\lceil\frac{n}{2}\rceil$, by a quite similar way as in the proof for Case i), we can infer from Theorem \ref{Thm0} and induction that
\begin{equation}\label{2c25-05}
  (-\Delta)^{m-k+\frac{\alpha}{2}}u(x)=f_{k+1}(u)(x)=\int_{\mathbb{R}^{n}}\frac{R_{2,n}}{|x-y|^{n-2}}f_{k}(u)(y)dy
\end{equation}
for $k=1,2,\cdots,\lceil\frac{n}{2}\rceil-1$, and
\begin{equation}\label{2c50-05}
 (-\Delta)^{m-\lceil\frac{n}{2}\rceil+\frac{\alpha}{2}}u(x)\geq f_{\lceil\frac{n}{2}\rceil+1}(u)(x)=\int_{\mathbb{R}^{n}}\frac{R_{2,n}}{|x-y|^{n-2}}f_{\lceil\frac{n}{2}\rceil}(u)(y)dy.
\end{equation}
In particular, it follows from \eqref{2c25-05} and \eqref{2c50-05} that
\begin{eqnarray}\label{2c51-05}
  && +\infty>(-\Delta)^{m-k+\frac{\alpha}{2}}u(0)=\int_{\mathbb{R}^{n}}\frac{R_{2,n}}{|y|^{n-2}}f_{k}(u)(y)dy \\
 \nonumber &\geq& \int_{\mathbb{R}^{n}}\frac{R_{2,n}}{|y^{k}|^{n-2}}\int_{\mathbb{R}^{n}}\frac{R_{2,n}}{|y^{k}-y^{k-1}|^{n-2}}\cdots
  \int_{\mathbb{R}^{n}}\frac{R_{2,n}}{|y^{2}-y^{1}|^{n-2}}\left(\frac{1}{|\cdot|^{\sigma}}\ast|u|^{p}\right)(y^{1})u^{q}(y^{1})dy^{1}\cdots dy^{k}
\end{eqnarray}
for $k=1,2,\cdots,\lceil\frac{n}{2}\rceil-1$, and
\begin{eqnarray}\label{formula-05}
  && +\infty>(-\Delta)^{m-\lceil\frac{n}{2}\rceil+\frac{\alpha}{2}}u(0)\geq\int_{\mathbb{R}^{n}}\frac{R_{2,n}}{|y|^{n-2}}f_{\lceil\frac{n}{2}\rceil}(u)(y)dy \\
 \nonumber && \qquad \geq\int_{\mathbb{R}^{n}}\frac{R_{2,n}}{|y^{\lceil\frac{n}{2}\rceil}|^{n-2}}\Bigg(\int_{\mathbb{R}^{n}}\frac{R_{2,n}}{|y^{\lceil\frac{n}{2}\rceil}-y^{\lceil\frac{n}{2}\rceil-1}|^{n-2}}\cdots
  \int_{\mathbb{R}^{n}}\frac{R_{2,n}}{|y^{2}-y^{1}|^{n-2}} \\
  \nonumber &&\qquad\quad \times\left(\frac{1}{|\cdot|^{\sigma}}\ast|u|^{p}\right)(y^{1})u^{q}(y^{1})dy^{1}\cdots dy^{\lceil\frac{n}{2}\rceil-1}\Bigg)dy^{\lceil\frac{n}{2}\rceil}.
\end{eqnarray}
By applying the formula \eqref{2c26-5} and direct calculations, we obtain that
\begin{eqnarray}\label{2c27-05}
  && \int_{\mathbb{R}^{n}}\frac{R_{2,n}}{|y^{\lceil\frac{n}{2}\rceil}-y^{\lceil\frac{n}{2}\rceil-1}|^{n-2}}\cdots
\int_{\mathbb{R}^{n}}\frac{R_{2,n}}{|y^{3}-y^{2}|^{n-2}}\cdot\frac{R_{2,n}}{|y^{2}-y^{1}|^{n-2}}dy^{2}\cdots dy^{\lceil\frac{n}{2}\rceil-1} \\
 \nonumber &=& \frac{R_{2\lceil\frac{n}{2}\rceil-2,n}}{|y^{\lceil\frac{n}{2}\rceil}-y^{1}|^{n-2\lceil\frac{n}{2}\rceil+2}}.
\end{eqnarray}

Now, we can deduce from \eqref{formula-05}, \eqref{2c27-05} and Fubini's theorem that
\begin{eqnarray}\label{contradiction-05}
  && +\infty>(-\Delta)^{m-\lceil\frac{n}{2}\rceil+\frac{\alpha}{2}}u(0) \\
  \nonumber &\geq&\int_{\mathbb{R}^{n}}\frac{R_{2,n}}{|y^{\lceil\frac{n}{2}\rceil}|^{n-2}}
  \left(\int_{\mathbb{R}^{n}}\frac{R_{2\lceil\frac{n}{2}\rceil-2,n}}{|y^{\lceil\frac{n}{2}\rceil}-y^{1}|^{n-2\lceil\frac{n}{2}\rceil+2}}
  \left(\frac{1}{|\cdot|^{\sigma}}\ast|u|^{p}\right)(y^{1})u^{q}(y^{1})dy^{1}\right)dy^{\lceil\frac{n}{2}\rceil} \\
 \nonumber &=& C_{n}\int_{\mathbb{R}^{n}}\frac{1}{|y|^{n-2}}\left(\int_{\mathbb{R}^{n}}\frac{1}{|y-z|^{n-2\lceil\frac{n}{2}\rceil+2}}
 \left(\frac{1}{|\cdot|^{\sigma}}\ast|u|^{p}\right)(z)u^{q}(z)dz\right)dy.
\end{eqnarray}

We will get a contradiction from \eqref{contradiction-05}. To do this, let $\tau(n):=n-2\lceil\frac{n}{2}\rceil+2\in\{1,2\}$, then it follows from \eqref{2c60-5} and \eqref{2c61-5} that, there exists a $R_{0}\geq3$ sufficiently large such that, for any $|y|\geq R_{0}$,
\begin{eqnarray}\label{2c63-05}
&&\int_{\mathbb{R}^{n}}\frac{1}{|y-z|^{\tau(n)}}\left(\frac{1}{|\cdot|^{\sigma}}\ast|u|^{p}\right)(z)u^{q}(z)dz \\
\nonumber &\geq& \frac{1}{2^{\tau(n)}|y|^{\tau(n)}\ln|y|}\int_{|z|\geq\left(\ln|y|\right)^{-\frac{1}{n-2}}}\frac{1}{|z|^{n-2}}
\left(\frac{1}{|\cdot|^{\sigma}}\ast|u|^{p}\right)(z)u^{q}(z)dz \\
\nonumber &\geq& \frac{1}{2^{\tau(n)+1}|y|^{\tau(n)}\ln|y|}\int_{\mathbb{R}^{n}}\left(\frac{1}{|\cdot|^{\sigma}}\ast|u|^{p}\right)(z)
\frac{u^{q}(z)}{|z|^{n-2}}dz\geq\frac{C_{0}}{2^{\tau(n)+1}|y|^{\tau(n)}\ln|y|}.
\end{eqnarray}
Therefore, we can finally deduce from \eqref{contradiction-05} and \eqref{2c63-05} that
\begin{equation}\label{final-05}
 +\infty>(-\Delta)^{m-\lceil\frac{n}{2}\rceil+\frac{\alpha}{2}}u(0)\geq\frac{C_{0}C_{n}}{2^{\tau(n)+1}}\int_{|y|\geq R_{0}}\frac{1}{|y|^{n-2+\tau(n)}\ln|y|}dy=+\infty,
\end{equation}
which is a contradiction again. Therefore, $u\equiv0$ in $\mathbb{R}^{n}$ in Case ii): $m\geq \lceil\frac{n}{2}\rceil$. This concludes our proof of Theorem \ref{Thm3}.

\end{document}